\newcommand{\Om}{\Omega}
\newcommand{\R}{\mathbb{R}}
\theoremstyle{plain}
\newtheorem{theorem}{Theorem}
\newtheorem{corollary}[theorem]{Corollary}
\newtheorem{lemma}[theorem]{Lemma}
\newtheorem{prop}[theorem]{Proposition}
\theoremstyle{definition}
\theoremstyle{remark}
\newtheorem{remark}[theorem]{Remark}
\numberwithin{equation}{section}
\numberwithin{theorem}{section}
\begin{document}
\title{The Gelfand problem for the Infinity Laplacian}

\author[F. Charro]{Fernando Charro}
\address{Department of Mathematics, Wayne State University, 656 W. Kirby, Detroit, MI 48202, USA}
\email{fcharro@wayne.edu}
\thanks{}

\author[B. Son]{Byungjae Son}
\address{Department of Mathematics and Statistics, University of Maine, Orono, ME 04469, USA}
\email{byungjae.son@maine.edu}

\author[P. Wang]{Peiyong Wang}
\address{Department of Mathematics, Wayne State University, 656 W. Kirby, Detroit, MI 48202, USA}
\email{av2308@wayne.edu}

\thanks{F.C. partially supported by grant MTM2017-84214-C2-1-P and 
PID2019-110712GB-I100 funded by MCIN/AEI/10.13039/501100011033 and by ``ERDF A way of making Europe''.
}

\keywords{Fully nonlinear, Infinity Laplacian,  Gelfand problem.}

\subjclass[2000]{
35J15, 
35J92, 
 35J94. 
 }


\begin{abstract}
We study the asymptotic behavior as $p\to\infty$ of the Gelfand problem
\begin{equation*}
\left\{
\begin{aligned}
-&\Delta_{p} u=\lambda\,e^{u}&&\textrm{in}\ \Omega\subset\R^n\\
&u=0 &&\textnormal{on}\ \partial\Omega.
\end{aligned}
\right.
\end{equation*}
Under an appropriate rescaling on $u$ and $\lambda$, we  prove uniform convergence of solutions of the Gelfand problem to solutions of 
\[
\left\{
\begin{aligned}
&\min\left\{|\nabla{}u|-\Lambda\,e^{u},
-\Delta_{\infty}u\right\}=0&&\textrm{in}\ \Om,\\
&u=0\ &&\text{on}\ \partial\Omega.
\end{aligned}
\right.
\]
We discuss  existence, non-existence, and  multiplicity of  solutions of the limit problem in terms of  ~$\Lambda$.
\end{abstract}

\maketitle



\begin{center}
\it\large
Dedicated to the memory of Ireneo Peral,  with love and admiration
\end{center}

\vspace{10pt}

\section{Introduction}

We are interested in the asymptotic behavior as $p\to\infty$ of sequences of solutions of the problem
\begin{equation}\label{intro.original}
\left\{
\begin{aligned}
-&\Delta_{p} u=\lambda\,e^{u}&&\textrm{in}\ \Omega\subset\R^n\\
&u=0 &&\textnormal{on}\ \partial\Omega.
\end{aligned}
\right.
\end{equation}
In the case $p=2$, problem \eqref{intro.original} is  known as the
 Liouville-Bratu-Gelfand problem \cite{Bratu, Gelfand,Liouville}; see also \cite{Davila,Joseph.Lundgren.1973}. It appears in connection with 
prescribed Gaussian curvature problems \cite{Chanillo-Kiessling,Liouville}, emission of electricity from hot bodies \cite{Richardson.1921}, and the
equilibrium of gas spheres and the structure of stars \cite{Chandrasekhar,Endem.1907,Walker}.
Problem \eqref{intro.original} with $p=2$ was also studied by Barenblatt in relation to combustion theory in a volume edited by Gelfand \cite{Gelfand}. 
For general $p$, problem \eqref{intro.original}   is  often known in the literature as the ``Gelfand problem" or a ``Gelfand-type problem". It  was studied by Garc\'ia-Azorero, Peral, and Puel in  \cite{GP2, GPP}; see also \cite{Cabre.Sanchon.2007,Jacobsen.Schmitt.2002,Sanchon.2007} and the references therein. 

The asymptotic study of $p$-Laplacian problems as $p\to\infty$  offers a qualitative and quantitative understanding of their solution sets for large $p$, see \cite{Charro.Parini, Charro.Parini2, Charro.Peral,Charro.Peral2, Fukagai, Juutinen-Lindqvist-Manfredi}. Additionally, they have been used in \cite{Grosjean} to obtain optimal bounds for the diameter of manifolds in terms of their curvature.

In \cite{Charro.Parini,Charro.Parini2,Charro.Peral, Charro.Peral2, Fukagai, Juutinen-Lindqvist-Manfredi}, the authors study limits of $p$-Laplacian equations with  power-type right-hand sides and combinations of these. In all these cases, the parameter $\lambda$ is allowed to vary with $p$ in order to get nontrivial limits of sequences $\{u_{\lambda,p}\}_p$ of solutions to the corresponding $p$-Laplacian problem; namely,
\[
\lambda_p^{1/p}\to\Lambda\quad \textnormal{and} \quad u_{\lambda_{p},p}\to u \quad \textnormal{as} \ p\to\infty.
\]
With an exponential right-hand side, the solution sets change more drastically as $p\to\infty$ and  more severe rescalings become necessary.
To take limits in \eqref{intro.original}, we consider  
\begin{equation}\label{main.problem}
\left\{
\begin{aligned}
-&\Delta_{p} u_{\lambda_{p},p}=\lambda_{p}\,e^{u_{\lambda_{p},p}}&&\textrm{in}\ \Omega\\
&u_{\lambda_{p},p}=0 &&\textnormal{on}\ \partial\Omega,
\end{aligned}
\right.
\end{equation}
with the rescaling
\begin{equation}\label{normalization}
\frac{\lambda_p^{1/p}}{p}\to\Lambda\quad\text{as}\ p\to\infty.
\end{equation} 
Under this normalization, we prove that any uniform limit 
\begin{equation}\label{normalization.u}
\frac{u_{\lambda_{p},p}}{p}
\to u\quad\text{as}\ p\to\infty
\end{equation}
 is   a viscosity solution of the  limit problem
\begin{equation}\label{limite1}
\left\{
\begin{aligned}
&\min\left\{|\nabla{}u|-\Lambda\,e^{u},
-\Delta_{\infty}u\right\}=0&&\textrm{in}\ \Om,\\
&u=0 &&\text{on}\ \partial\Omega.
\end{aligned}
\right.
\end{equation}

It is worth noting that in 
\cite{Mihailescu-StancuDumitru-Varga},
the authors consider problem \eqref{intro.original} for  $\lambda$ independent of $p$ (i.e., without rescaling \eqref{normalization}) and obtain that  the corresponding solutions $u_p$ converge uniformly as $p\to\infty$ to the distance function to the  boundary of the domain. As the authors acknowledge in their paper, this result is not unexpected since for each  nonnegative function $f \in L^\infty(\Omega) \setminus \{0\}$, the sequence of unique solutions of 
\[
\left\{
\begin{aligned}
-&\Delta_{p}v_p=f(x)&&\text{in}\ \Omega\\
&v_p=0&&\text{on}\ \partial\Omega,
\end{aligned}
\right.
\]
converges uniformly in $\Omega$ to  the distance function to the boundary of the domain; see \cite{Bha-DiBe-Man,Kawohl90}.


In this paper, we prove passage to the limit of the sequence of minimal solutions of problem \eqref{main.problem} under the rescaling \eqref{normalization}, \eqref{normalization.u}. 
Furthermore, we show that the resulting limit is a minimal solution of \eqref{limite1}. Note that the fact that the limit solution is minimal is nontrivial; in principle, they could differ. To prove this, we use a  comparison principle for ``small solutions" of problem \eqref{limite1}, which we prove in  Section \ref{comparison}. As it turns out,  minimal solutions to problem \eqref{limite1} are ``small" in the sense of this comparison principle.
To the best of our knowledge, no corresponding comparison and uniqueness results for small solutions were known in the literature for $p<\infty$.

In Section \ref{section.explicit}, we find a second solution to the limit problem \eqref{limite1} under certain geometric assumptions on the domain $\Omega$. Furthermore, we show that both solutions lie on an explicit curve of solutions. Some examples of domains satisfying the geometric condition are  the ball, the annulus, and the stadium (convex hull of two balls of the same radius); a square or an ellipse does not verify the condition. 
We conjecture that this second solution is a limit of appropriately rescaled mountain-pass solutions of \eqref{main.problem}. 

The paper is organized as follows.  
In Section  \ref{sect.prelim},  we provide some necessary preliminaries,
and  Section \ref{section.limit.problem} formally introduces the limit problem. We have chosen to introduce the limit problem before proving any convergence results  to streamline the presentation. 
In Section \ref{comparison}, we prove the comparison principle for small solutions of the limit equation \eqref{limite1}.  
 Section \ref{sect.non.existence} concerns  non-existence of  solutions to \eqref{limite1} for large values of $\Lambda$.
In Section \ref{existence.minimal.limit.problem}, we find a branch of minimal solutions to  \eqref{limite1} up to a maximal $\Lambda$.
 Section \ref{p.minimal.limits} discusses  uniform convergence as $p\to\infty$ of  $p$-minimal solutions  to minimal  solutions of  \eqref{limite1}.
Finally, in Section \ref{section.explicit}, we show the multiplicity result  and  exhibit a curve of explicit solutions under a  geometric condition on the domain.


\section{Preliminaries}\label{sect.prelim}  

In this section, we state some necessary preliminaries and notation. First, let us recall that  weak solutions  of
 problem \eqref{intro.original} are also viscosity solutions.
The proof, which we omit
here, follows  
\cite[Lemma 1.8]{Juutinen-Lindqvist-Manfredi}; see also
\cite{Bha-DiBe-Man}.

\begin{lemma}
If $u$ is a continuous weak solution of \eqref{intro.original},
then
it is a viscosity solution of the same problem, rewritten as
\[
\left\{\begin{array}{l}
\displaystyle{F_{p}(\nabla{u},D^2u)=\lambda\,e^{u}\quad\textrm{in}\ \Om}\\
\displaystyle{u=0 \quad\textrm{on}\ \partial\Om,}
\end{array}\right.
\]
where
\begin{equation}\label{main.problem.2}
F_p(\xi,X)=-|\xi|^{p-2}\cdot\textnormal{trace}\left(\Big(I+(p-2)\frac{\xi\otimes\xi}{|\xi|^2}\Big)X\right).
\end{equation}
\end{lemma}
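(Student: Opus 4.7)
The plan is to follow the standard scheme from \cite{Juutinen-Lindqvist-Manfredi}. First I would record the pointwise identity
\[
F_p(\nabla u, D^2 u) = -|\nabla u|^{p-2}\Delta u - (p-2)|\nabla u|^{p-4}\langle D^2 u\, \nabla u, \nabla u\rangle = -\Delta_p u,
\]
valid for every $C^2$ function with $\nabla u(x)\ne 0$; it follows at once by expanding the divergence $\operatorname{div}(|\nabla u|^{p-2}\nabla u)$ and identifies the non-divergence form of the operator whose viscosity theory is being invoked.

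Next I would establish the supersolution property of the continuous weak solution $u$ by contradiction. Let $\varphi\in C^2(\Omega)$ touch $u$ from below at $x_0\in\Omega$; after replacing $\varphi$ by $\varphi-\epsilon|x-x_0|^2$ the touching can be taken to be strict. Suppose
\[
F_p(\nabla\varphi(x_0),D^2\varphi(x_0)) < \lambda e^{\varphi(x_0)},
\]
and assume momentarily that $\nabla\varphi(x_0)\ne 0$. Continuity then yields a ball $B_r(x_0)\subset\Omega$ on which $-\Delta_p\varphi < \lambda e^{\varphi} \le \lambda e^{u}$ pointwise, so $\varphi$ is a classical (hence weak) strict subsolution of $-\Delta_p w = \lambda e^{u(\cdot)}$ in $B_r(x_0)$, regarded now as an equation with the \emph{fixed} continuous right-hand side $\lambda e^{u(\cdot)}$. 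Setting $m=\min_{\partial B_r(x_0)}(u-\varphi)>0$, the shifted function $\tilde\varphi=\varphi+m$ remains a weak subsolution of the same equation, satisfies $\tilde\varphi\le u$ on $\partial B_r(x_0)$, yet $\tilde\varphi(x_0)>u(x_0)$. This contradicts the weak comparison principle for the $p$-Laplacian with fixed continuous right-hand side, which is obtained in the usual way by testing with $(\tilde\varphi-u)_+\in W_0^{1,p}(B_r(x_0))$ and exploiting the strict monotonicity of $\xi\mapsto|\xi|^{p-2}\xi$. The subsolution property is proved by a completely symmetric argument, touching $u$ from above and shifting the test function downward.

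The main obstacle is the degenerate case $\nabla\varphi(x_0)=0$, where the pointwise identity above breaks down and $F_p$ is defined at $\xi=0$ only through its continuous extension, which for $p\ge 2$ satisfies $F_p(0,X)=0$. This point is handled as in \cite{Juutinen-Lindqvist-Manfredi, Bha-DiBe-Man}: one perturbs the test function by a small quadratic term so that $\nabla\varphi\ne 0$ on a punctured neighborhood of $x_0$, extracts the contradiction there via the same weak comparison argument, and then passes to the limit in the perturbation parameter using that $F_p$ is continuous in both arguments for $p\ge 2$. The regularity $u\in C(\Omega)\cap W^{1,p}_{\mathrm{loc}}(\Omega)$ needed to run the weak comparison step in $B_r(x_0)$ is precisely what is assumed in the hypotheses.
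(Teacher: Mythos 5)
Your argument is correct and is essentially the standard comparison-based scheme of \cite[Lemma 1.8]{Juutinen-Lindqvist-Manfredi} and \cite{Bha-DiBe-Man}, which is exactly what the paper invokes (it omits the proof and cites these references): the monotonicity of $t\mapsto e^{t}$ interacts with the direction of touching precisely as you use it, and the shift by $m$ plus testing with $(\tilde\varphi-u)_{+}$ and strict monotonicity of $\xi\mapsto|\xi|^{p-2}\xi$ is the intended contradiction. The only delicate point, the vanishing-gradient case, is handled in the cited sources just as you indicate (for $p>2$ the operator $F_p$ extends continuously by $F_p(0,X)=0$), so no genuine gap remains.
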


The divergence form of the $p$-Laplacian, i.e., $\textnormal{div}(|\nabla u|^{p-2}\nabla u)$, is better suited for variational techniques, while the expanded form \eqref{main.problem.2} is preferable in the viscosity framework.
In the sequel, we will always consider the most suitable form without  further mention.

%
%
%
%
%
%
%


In  \cite{Kawohl90}  the   problem 
\[
\left\{
\begin{aligned}
-&\Delta_{p}v_p=1&&\text{in}\ \Omega\\
&v_p\in W_{0}^{1,p}(\Omega)
\end{aligned}
\right.
\]
is studied in connection with torsional creep problems when $\Omega$ is a general bounded domain. Since we are interested in the case $p\to\infty$, we can assume  $p>n$ without loss of generality. 
Then every function in  $v_p\in W_0^{1,p}(\Omega)$ can be considered continuous in $\overline\Omega$ and  0 on the boundary in the classical sense.
The existence result we will need below is the following. We refer the interested reader to \cite{Kawohl90} and \cite[Theorem 3.11 and Remark 4.23]{JuutinenTESIS} for the proof.

\begin{prop}\label{yllegokawohlymandoaparar}
Let $\Omega$ be a bounded domain and  $n<p<\infty$. Then,
there exists a unique solution  $v_{p}\in W_{0}^{1,p}(\Omega)\cap{C}(\overline\Omega)$
of the $p$-torsion problem
\begin{equation}\label{aux.kawohl2}
\left\{
\begin{aligned}
-&\Delta_{p}v_{p}=1&&\text{in}\ \Omega\\
&v_{p}=0&&\text{on}\ \partial\Omega,
\end{aligned}
\right.
\end{equation}
and $v_{p}$ converge uniformly as $p\to\infty$ to the  unique viscosity solution to
\[
\left\{
\begin{aligned}
&\min\{|\nabla v|-1,-\Delta_\infty v\}=0&&\text{in}\ \Omega,\\
&v=0&&\text{on}\ \partial\Omega.
\end{aligned}
\right.
\]
Moreover, $v(x)=\textnormal{dist}(x,\partial\Omega)$.
\end{prop}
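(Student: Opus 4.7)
The plan splits into three stages: existence and uniqueness at fixed $p$, compactness as $p\to\infty$, and identification of the uniform limit with the distance function. For fixed $p>n$, I would obtain $v_p$ by direct minimization of the strictly convex, coercive functional
\[
J_p(v) = \frac{1}{p}\int_\Omega |\nabla v|^p\,dx - \int_\Omega v\,dx
\]
on $W_0^{1,p}(\Omega)$. Strict convexity yields uniqueness, the Euler-Lagrange equation is $-\Delta_p v_p = 1$, and Morrey's embedding $W_0^{1,p}(\Omega)\hookrightarrow C^{0,1-n/p}(\overline{\Omega})$ (available since $p>n$) provides the continuous representative vanishing classically on $\partial\Omega$.

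For uniform bounds, testing the PDE against $v_p$ itself gives $\int_\Omega |\nabla v_p|^p\,dx = \int_\Omega v_p\,dx$, and a simple barrier comparison (for instance, $v_p$ is dominated above by the $p$-torsion solution on an enclosing ball) shows $\|v_p\|_\infty$ is bounded uniformly in $p$. Hence $\|\nabla v_p\|_{L^p(\Omega)} \le C^{1/(p-1)}$, and H\"older's inequality yields, for any fixed $q<\infty$ and all $p>q$,
\[
\|\nabla v_p\|_{L^q(\Omega)} \le |\Omega|^{1/q-1/p}\,\|\nabla v_p\|_{L^p(\Omega)},
\]
uniformly in $p$. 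Choosing $q>n$ and invoking Morrey again gives a uniform $C^{0,\alpha}(\overline{\Omega})$ bound on $\{v_p\}$, so Arzel\`a-Ascoli provides a subsequence converging uniformly to some $v \in C(\overline{\Omega})$ with $v|_{\partial\Omega}=0$.

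For the viscosity limit, suppose $\varphi\in C^2$ touches $v$ from above at $x_0$, and pick nearby maxima $x_p\to x_0$ of $v_p-\varphi$. The subsolution inequality reads
\[
-|\nabla\varphi(x_p)|^{p-2}\Delta\varphi(x_p) - (p-2)|\nabla\varphi(x_p)|^{p-4}\Delta_\infty\varphi(x_p)\le 1.
\]
If $|\nabla\varphi(x_0)|<1$, then $|\nabla\varphi(x_0)|-1<0$ and the min is already $\le 0$. If $|\nabla\varphi(x_0)|\ge 1$, dividing by $(p-2)|\nabla\varphi(x_p)|^{p-4}$ and letting $p\to\infty$ produces $-\Delta_\infty\varphi(x_0)\le 0$. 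In the supersolution case, if $|\nabla\varphi(x_0)|<1$ at a touching from below then the left-hand side tends to $0$ while the right-hand side is $1$, a contradiction; hence $|\nabla\varphi(x_0)|\ge 1$, and the analogous division gives $-\Delta_\infty\varphi(x_0)\ge 0$. Thus $v$ solves $\min\{|\nabla v|-1,-\Delta_\infty v\}=0$ in the viscosity sense. A direct verification shows $d(x)=\textnormal{dist}(x,\partial\Omega)$ is itself a viscosity solution of the same problem, and a Jensen-type comparison for this min-equation then forces $v=d$, so the whole sequence converges.

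\textbf{Main obstacle.} The most delicate step is the asymptotic analysis of the nonlinear operator in the regime $|\nabla\varphi(x_0)|\approx 1$: both the factor $|\nabla\varphi|^{p-2}$ and $(p-2)|\nabla\varphi|^{p-4}$ degenerate subtly, and the min-structure of the limit equation emerges precisely from this competition. Cleanly extracting the $-\Delta_\infty$ sign information while showing the $\Delta\varphi$ contribution is lower order is the technical heart of the argument, as is the uniqueness/comparison step for the limit problem, which must handle the free-boundary-type set where $|\nabla v|=1$.
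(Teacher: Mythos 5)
The paper gives no proof of this proposition at all---it is quoted from \cite{Kawohl90} and \cite[Theorem 3.11 and Remark 4.23]{JuutinenTESIS}---and your argument is essentially the standard proof behind those references, using the same limiting technique the paper itself employs in Proposition \ref{proposition.limit.eq}: variational existence and uniqueness for fixed $p$, a uniform bound on $\|\nabla v_p\|_{L^q}$ for fixed $q>n$ via H\"older plus Morrey and Arzel\`a--Ascoli, viscosity passage to the limit (using, implicitly, the standard fact that the continuous weak solution $v_p$ is also a viscosity solution, the analogue of the paper's first lemma in Section 2), and identification of the limit with $\textnormal{dist}(\cdot,\partial\Omega)$ by the comparison principle of Lemma \ref{comparison.f(x)}. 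One detail should be repaired: in the subsolution step the case split must be $|\nabla\varphi(x_0)|\le 1$ versus $|\nabla\varphi(x_0)|>1$, not $<1$ versus $\ge 1$. When $|\nabla\varphi(x_0)|=1$ the first entry of the minimum is already $\le 0$, so nothing needs proving, and this borderline case has to be excluded from the division argument: if $|\nabla\varphi(x_p)|=1-\varepsilon_p$ with $\varepsilon_p\to0$ but $p\,\varepsilon_p\to\infty$, the right-hand side $(p-2)^{-1}|\nabla\varphi(x_p)|^{4-p}$ blows up and gives no control on $-\Delta_\infty\varphi(x_0)$. Under the strict inequality $|\nabla\varphi(x_0)|>1$ the factor $|\nabla\varphi(x_p)|^{p-4}$ grows geometrically and your limit is justified; in the supersolution step the division is fine as written, since there one only needs the (positive) right-hand side to be bounded below by zero, and the case $\nabla\varphi(x_p)=0$ is excluded automatically because it would force $0\ge 1$. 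With that cosmetic fix, the remaining ingredients (the barrier bound on $\|v_p\|_\infty$, the verification that the distance function solves the limit equation, and the Jensen-type comparison, which is exactly Lemma \ref{comparison.f(x)}) are correct and standard, so the whole sequence indeed converges to $\textnormal{dist}(\cdot,\partial\Omega)$.
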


The uniqueness of the solution in Proposition \ref{yllegokawohlymandoaparar} follows from the following comparison principle.

\begin{lemma}\label{comparison.f(x)}
Let  $f:\Omega\to\mathbb{R}$ be a continuous, bounded, and positive function.
Suppose that $u,v:\overline\Omega\to\mathbb{R}$ are bounded, $u$ is upper semicontinuous and $v$ is lower semicontinuous in $\overline\Omega$. If $u$ and $v$ are, respectively, a viscosity sub- and supersolution of 
\[
\min\{|\nabla w|-f(x),-\Delta_\infty w\}=0\quad\text{in}\ \Omega,
\]
and $u \leq v$ on $\partial\Omega$, then $u \leq v$ in $\Omega$.
\end{lemma}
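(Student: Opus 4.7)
The plan is to argue by contradiction using classical doubling of variables, combined with a concave transformation that turns the supersolution $v$ into a \emph{strict} supersolution. Since the equation is translation invariant, the naive $v+\delta$ remains only a supersolution, and a multiplicative tilt $\alpha v$ makes only the gradient term strict. The right perturbation is $\tilde v_\delta:=\gamma_\delta(v)+C_\delta$, where $\gamma_\delta$ is smooth, strictly increasing and strictly concave on the range of $v$, with $\gamma_\delta'\geq 1+c\delta$ and $\gamma_\delta''\leq -c\delta$ (for instance, a concave quadratic). Applying the viscosity chain rule through $\phi=\gamma_\delta^{-1}\circ\psi$ whenever a smooth $\psi$ touches $\tilde v_\delta$ from below,
\[
|\nabla\psi|=\gamma_\delta'(\phi)|\nabla\phi|,\qquad \Delta_\infty\psi=\gamma_\delta'(\phi)^2\gamma_\delta''(\phi)|\nabla\phi|^4+\gamma_\delta'(\phi)^3\Delta_\infty\phi,
\]
and using the supersolution bounds $|\nabla\phi|\geq f$ and $-\Delta_\infty\phi\geq 0$ together with a local lower bound $f\geq f_0>0$ near an interior maximizer of $u-v$ (valid by continuity and positivity of $f$), one obtains $|\nabla\psi|-f\geq c\delta f_0$ and $-\Delta_\infty\psi\geq c\delta f_0^4=:\tau_\delta>0$. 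The constant $C_\delta$ is chosen so that $C_\delta\to 0$ while $u\leq\tilde v_\delta$ on $\partial\Omega$ is preserved, so that $\tilde v_\delta\to v$ uniformly.

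I then assume $\max_{\overline\Omega}(u-\tilde v_\delta)>0$ for some small $\delta$ and double variables. Let $(x_\epsilon,y_\epsilon)$ maximize $\Phi_\epsilon(x,y):=u(x)-\tilde v_\delta(y)-|x-y|^2/(2\epsilon)$. Standard estimates yield $x_\epsilon,y_\epsilon\to\hat x\in\Omega$ (interior, near where $f\geq f_0$) with $|x_\epsilon-y_\epsilon|^2/\epsilon\to 0$. The Crandall--Ishii--Lions maximum principle for semicontinuous functions provides symmetric matrices $X_\epsilon\leq Y_\epsilon$ with $(p_\epsilon,X_\epsilon)\in\overline{J}^{2,+}u(x_\epsilon)$, $(p_\epsilon,Y_\epsilon)\in\overline{J}^{2,-}\tilde v_\delta(y_\epsilon)$, where $p_\epsilon=(x_\epsilon-y_\epsilon)/\epsilon$. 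The strict supersolution forces $|p_\epsilon|\geq f(y_\epsilon)+\tau_\delta$ and $\langle Y_\epsilon p_\epsilon,p_\epsilon\rangle\leq -\tau_\delta$. The subsolution condition splits into two cases: either (a) $|p_\epsilon|\leq f(x_\epsilon)$, giving $f(x_\epsilon)-f(y_\epsilon)\geq\tau_\delta$ and contradicting the continuity of $f$ as $\epsilon\to 0$; or (b) $\langle X_\epsilon p_\epsilon,p_\epsilon\rangle\geq 0$, contradicting $\langle X_\epsilon p_\epsilon,p_\epsilon\rangle\leq\langle Y_\epsilon p_\epsilon,p_\epsilon\rangle\leq -\tau_\delta$. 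Hence $u\leq\tilde v_\delta$, and letting $\delta\to 0$ yields $u\leq v$.

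The principal obstacle is the strict-supersolution construction. Without strictness the doubling-variables analysis stalls: case (a) only forces $|p_\epsilon|\to f(\hat x)$ in the limit and case (b) only $\langle X_\epsilon p_\epsilon,p_\epsilon\rangle=\langle Y_\epsilon p_\epsilon,p_\epsilon\rangle=0$, neither of which contradicts anything. It is the negative sign of $\gamma_\delta''$, coupled with the pointwise lower bound $|\nabla v|\geq f>0$ (where the hypothesis $f>0$ is used essentially), that produces the strict bound on $-\Delta_\infty\tilde v_\delta$ which closes the case (b) analysis; simultaneously, $\gamma_\delta'>1$ closes case (a).
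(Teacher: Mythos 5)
Your argument is correct. Note, however, that the paper does not actually prove this lemma: it is quoted from the literature (Juutinen's thesis, Theorem 4.18 and Remark 4.23, and Jensen's comparison result), so the only in-paper object of comparison is the proof of Theorem \ref{theorem.ppio.comparacion}, which handles the $w$-dependent right-hand side $f(w)$. Your route is the same general strategy used there -- manufacture a strict supersolution, then run the doubling of variables with the Crandall--Ishii matrix inequality $X_\epsilon\le Y_\epsilon$ and split into cases according to which branch of the $\min$ is active -- but the perturbation device differs: the paper uses the power change of variables $w\mapsto w^{1-q}$ followed by the affine perturbation $(1+\epsilon)(\tilde v+\epsilon)$, which is tailored to the Brezis--Oswald-type monotonicity of $f(t)/t^q$ and to keeping solutions ``small,'' whereas you exploit that here $f=f(x)$ does not depend on $w$, so a concave increasing tilt $\gamma_\delta(v)+C_\delta$ with $\gamma_\delta'\ge 1+c\delta$, $\gamma_\delta''\le-c\delta$ already makes both branches strictly positive, the gradient branch via $\gamma_\delta'>1$ and the $\Delta_\infty$ branch via $\gamma_\delta''<0$ combined with $|\nabla v|\ge f\ge f_0>0$ on a compact neighborhood of the limit maximizer. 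Your version is arguably more elementary (no positivity or smallness of the sub/supersolutions is needed, consistent with the statement), at the price of being specific to $x$-dependent right-hand sides; the paper's change of variables is what allows the non-proper $f(u)$ case. The only points to spell out are routine: the quadratic $\gamma_\delta$ must be tuned to the bounded range of $v$ so that $\gamma_\delta'\ge 1+c\delta$ holds there, the shift $C_\delta=O(\delta)$ must be taken large enough to preserve $u\le\tilde v_\delta$ on $\partial\Omega$ while still giving $\tilde v_\delta\to v$ uniformly, and the strict supersolution inequalities, proved via test functions, pass to the closed semijets $\overline{J}^{2,-}$ because the operator is continuous (here the continuity of $f$ is used again in case (a)).
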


We refer the interested reader for instance to 
 \cite[Theorem 4.18 and Remark 4.23]{JuutinenTESIS} and also  \cite[Theorem 2.1]{Jensen93} (for the proof of  \cite[Theorem 4.18]{JuutinenTESIS}, notice that every $\infty$-superharmonic function is  Lipschitz continuous, see \cite{lindqvist-manfredi2}).



We  also need some facts about first eigenvalues and eigenfunctions of the $p$-Laplacian. Let us recall that the first eigenvalue $\lambda_1(p;\Omega)$ is characterized by the nonlinear Rayleigh quotient
\[
\lambda_1(p;\Omega)=\inf_{\phi\in W_0^{1,p}(\Omega)}\frac{\int_{\Omega}|\nabla \phi|^p\,dx}{\int_{\Omega}| \phi|^p\,dx}.
\]

In  \cite{lindqvist90} (see also \cite{lindqvist92}), it is proved that the first eigenvalue of the $p$-Laplacian is simple (that is,  the first eigenfunction is unique up to multiplication by constants) when  $\Omega$ is a  bounded domain; see also \cite{Anane,GP, IP} and the references in \cite{lindqvist90}. Moreover,  it is also proved in  \cite{lindqvist90} that in a  bounded domain,  only the first eigenfunction is positive and that the first eigenvalue is isolated  (there exists $\epsilon>0$ such that there are no eigenvalues in $(\lambda_1,\lambda_1+\epsilon]$).

\begin{prop}[\text{\cite{lindqvist90}}]
Let $\Omega$ be a bounded domain and  $n<p<\infty$. Then, there exists a  solution $\psi_{p}\in W_{0}^{1,p}(\Omega)\cap{C}(\overline\Omega)$ of
\begin{equation*}
\left\{
\begin{aligned}
-&\Delta_{p}\psi_{p}=\lambda_1(p;\Omega)\,|\psi_{p}|^{p-2}\psi_{p}&&\text{in}\ \Omega\\
&\psi_{p}=0 &&\text{on}\ \partial\Omega.
\end{aligned}
\right.
\end{equation*}
Moreover, $\lambda_1(p;\Omega)$ is simple and isolated.
\end{prop}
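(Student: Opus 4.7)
The plan is to prove existence via the direct method on the Rayleigh quotient, simplicity via Picone's identity for the $p$-Laplacian, and isolation by compactness combined with a Faber--Krahn-type lower bound on nodal domains. For \emph{existence}, I would take a minimizing sequence $\{\phi_k\}\subset W_0^{1,p}(\Omega)$ with $\|\phi_k\|_{L^p(\Omega)}=1$ and $\int_\Omega|\nabla\phi_k|^p\,dx\to\lambda_1(p;\Omega)$. Reflexivity of $W_0^{1,p}(\Omega)$ extracts a weakly convergent subsequence $\phi_k\rightharpoonup\psi_p$; the compact Rellich--Kondrachov embedding $W_0^{1,p}\hookrightarrow L^p$ preserves the normalization, and weak lower semicontinuity of the $p$-Dirichlet integral forces $\int|\nabla\psi_p|^p\le\lambda_1$, hence equality. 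Replacing $\psi_p$ by $|\psi_p|$ (with the same Dirichlet and $L^p$ norms) I may assume $\psi_p\ge 0$; Morrey's embedding for $p>n$ yields $\psi_p\in C(\overline\Omega)$ with $\psi_p=0$ on $\partial\Omega$. The Euler--Lagrange equation for the constrained minimization is exactly the stated PDE, and the strong maximum principle for the $p$-Laplacian upgrades $\psi_p\ge 0$ to $\psi_p>0$ in $\Omega$.

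For \emph{simplicity}, suppose $u,v$ are positive first eigenfunctions normalized in $L^p$. The key tool is Picone's identity for the $p$-Laplacian,
\[
|\nabla u|^p\ge|\nabla v|^{p-2}\nabla v\cdot\nabla\!\left(\tfrac{u^p}{v^{p-1}}\right),
\]
with equality pointwise if and only if $u/v$ is constant. Using $u^p/v^{p-1}$ as a test function in the weak eigenvalue equation for $v$, and performing the symmetric computation with the roles of $u$ and $v$ reversed, integration forces equality throughout, whence $u\equiv cv$. This is the main obstacle: checking admissibility of the test function requires boundary Hopf-type bounds that ensure $u/v$ extends to a bounded function on $\overline\Omega$. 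An equivalent route would be the D{\'\i}az--Saa convexity of the functional $w\mapsto\int_\Omega|\nabla w^{1/p}|^p\,dx$ on the cone of nonnegative functions of prescribed $L^1$ mass.

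For \emph{isolation}, I would argue by contradiction: if eigenvalues $\mu_k>\lambda_1$ accumulated at $\lambda_1$, the corresponding normalized eigenfunctions $u_k$ would be bounded in $W_0^{1,p}(\Omega)$ and, along a subsequence, converge strongly in $L^p$ to a function $u_\infty$ that (after passing to the limit in the weak formulation) is a first eigenfunction, hence sign-definite by simplicity. Each $u_k$ itself must change sign, since a positive eigenfunction would, via another application of Picone, be forced to correspond to the first eigenvalue. Denoting by $\Omega_k^-$ the nodal domain $\{u_k<0\}$, restricting $u_k$ to $\Omega_k^-$ and exploiting the variational characterization of $\lambda_1(p;\Omega_k^-)$ together with its monotonicity under domain inclusion yields a uniform lower bound $|\Omega_k^-|\ge c>0$, which contradicts strong $L^p$ convergence of $u_k$ to a sign-definite limit.
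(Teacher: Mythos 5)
The paper offers no proof of this proposition: it is imported verbatim from Lindqvist's work \cite{lindqvist90} (with the addendum \cite{lindqvist92}), so there is no in-paper argument to compare against, and the right benchmark is the cited literature. Your sketch essentially reconstructs that standard argument, and its architecture (direct method for existence, Picone/D\'{\i}az--Saa for simplicity, sign-change plus a nodal-domain measure bound for isolation) is sound. Two points should be sharpened. For simplicity, the admissibility obstacle you flag is real, but the classical fix is not a Hopf-type boundary estimate: Lindqvist regularizes, testing the two weak equations with quotients built from $u+\varepsilon$ and $v+\varepsilon$, such as $\bigl((u+\varepsilon)^p-(v+\varepsilon)^p\bigr)/(u+\varepsilon)^{p-1}$ and its counterpart, and then lets $\varepsilon\to 0$; this needs no boundary behavior beyond $u,v\in W_0^{1,p}\cap L^\infty$, whereas a Hopf quotient bound is both stronger than necessary and delicate in a general bounded domain. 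For isolation, monotonicity of $\lambda_1$ under domain inclusion by itself gives nothing (it only yields $\lambda_1(p;\Omega_k^-)\ge\lambda_1(p;\Omega)$); the step that produces $|\Omega_k^-|\ge c>0$ is the Faber--Krahn/Sobolev-type estimate $\lambda_1(p;D)\ge C\,|D|^{-\alpha}$ that you mention only in your opening sentence, applied with $\lambda_1(p;\Omega_k^-)\le\mu_k\to\lambda_1$. You should also say how you pass to the limit in the weak formulation for $u_k$: boundedness in $W_0^{1,p}$ plus the $(S_+)$-type property of the $p$-Laplacian upgrades weak convergence to strong $W_0^{1,p}$ convergence, which is what lets the nonlinear term converge. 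With those repairs your outline matches the proof in the cited source.
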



Lastly, we recall the behavior as $p\to\infty$ of the first eigenvalue of the $p$-Laplacian, see \cite{Juutinen-Lindqvist-Manfredi} for the proof.

\begin{lemma}\label{lema.autovalores}
$\displaystyle\lim_{p\to\infty}\lambda_{1}(p,\Omega)^{1/p}=\Lambda_1(\Omega)=
\|\textnormal{dist}(\cdot,\partial\Omega)\|_\infty^{-1}.$
\end{lemma}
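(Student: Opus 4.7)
The plan is to sandwich $\lambda_1(p,\Omega)^{1/p}$ between two quantities that both tend to $\|\textnormal{dist}(\cdot,\partial\Omega)\|_\infty^{-1}$. For the upper bound, I would use the distance function itself as a test function in the Rayleigh characterization of $\lambda_1(p,\Omega)$. Since $\phi(x)=\textnormal{dist}(x,\partial\Omega)$ is Lipschitz, vanishes on $\partial\Omega$, and satisfies $|\nabla\phi|\leq 1$ almost everywhere by Rademacher's theorem, it lies in $W_0^{1,p}(\Omega)$ for every $p>n$, and
\[
\lambda_1(p,\Omega) \leq \frac{\int_\Omega|\nabla\phi|^p\,dx}{\int_\Omega|\phi|^p\,dx} \leq \frac{|\Omega|}{\|\phi\|_{L^p(\Omega)}^p}.
\]
Taking $p$-th roots and using $|\Omega|^{1/p}\to 1$ together with $\|\phi\|_{L^p(\Omega)}\to\|\phi\|_\infty$ as $p\to\infty$, one gets $\limsup_{p\to\infty}\lambda_1(p,\Omega)^{1/p}\leq\|\phi\|_\infty^{-1}=\Lambda_1(\Omega)$.

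For the matching lower bound, I would compare the eigenfunction with the $p$-torsion function from Proposition \ref{yllegokawohlymandoaparar}. Let $\psi_p$ be the first positive eigenfunction normalized so that $\|\psi_p\|_\infty=1$; then $\psi_p^{p-1}\leq 1$ in $\Omega$, so in the weak sense
\[
-\Delta_p\psi_p = \lambda_1(p,\Omega)\,\psi_p^{p-1} \leq \lambda_1(p,\Omega) = -\Delta_p\bigl(\lambda_1(p,\Omega)^{1/(p-1)}\,v_p\bigr),
\]
where I used the scaling identity $-\Delta_p(cv)=c^{p-1}(-\Delta_p v)$ together with $-\Delta_p v_p=1$. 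Both sides vanish on $\partial\Omega$, so the standard weak comparison principle for the $p$-Laplacian gives $\psi_p \leq \lambda_1(p,\Omega)^{1/(p-1)}\,v_p$ in $\Omega$. Taking suprema and using $\|\psi_p\|_\infty=1$ yields $1\leq\lambda_1(p,\Omega)^{1/(p-1)}\|v_p\|_\infty$, and since $\|v_p\|_\infty\to\|\textnormal{dist}(\cdot,\partial\Omega)\|_\infty$ by Proposition \ref{yllegokawohlymandoaparar},
\[
\liminf_{p\to\infty}\lambda_1(p,\Omega)^{1/(p-1)} \geq \|\textnormal{dist}(\cdot,\partial\Omega)\|_\infty^{-1}.
\]

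To finish, I would write $\lambda_1(p,\Omega)^{1/p}=\bigl(\lambda_1(p,\Omega)^{1/(p-1)}\bigr)^{(p-1)/p}$: the upper bound already shows that $\lambda_1(p,\Omega)^{1/(p-1)}$ stays bounded from above, so the exponent shift $(p-1)/p\to 1$ is harmless and the $\liminf$ bound transfers to $\lambda_1(p,\Omega)^{1/p}$, closing the sandwich. The main technical step is the lower-bound comparison, because it requires picking the right comparison function $\lambda_1^{1/(p-1)}v_p$ and invoking weak comparison for the $p$-Laplacian; everything else is bookkeeping with $L^p\to L^\infty$ asymptotics on a bounded domain and the scaling of $-\Delta_p$ under multiplication by constants.
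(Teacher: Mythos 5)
Your argument is correct, but it does not follow the route the paper relies on: the paper gives no proof of this lemma and instead cites \cite{Juutinen-Lindqvist-Manfredi}, where the upper bound is obtained exactly as you do (testing the Rayleigh quotient with $\textnormal{dist}(\cdot,\partial\Omega)$ and using $\|\phi\|_{L^p}\to\|\phi\|_\infty$), while the lower bound is proved by a compactness argument: one normalizes the eigenfunctions, gets uniform $W^{1,m}$-bounds from $\|\nabla\psi_p\|_{L^p}=\lambda_1(p,\Omega)^{1/p}\|\psi_p\|_{L^p}$ and H\"older, extracts a uniform limit via Morrey/Ascoli--Arzel\`a, and uses that the limit is Lipschitz, vanishes on $\partial\Omega$, and hence satisfies $\|u_\infty\|_\infty\le\|\nabla u_\infty\|_\infty\,\|\textnormal{dist}(\cdot,\partial\Omega)\|_\infty$. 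Your lower bound replaces this with a comparison argument: with $\|\psi_p\|_\infty=1$ one has $-\Delta_p\psi_p=\lambda_1\psi_p^{p-1}\le\lambda_1=-\Delta_p\bigl(\lambda_1^{1/(p-1)}v_p\bigr)$, and the weak comparison principle gives $1\le\lambda_1^{1/(p-1)}\|v_p\|_\infty$, which combined with $\|v_p\|_\infty\to\|\textnormal{dist}(\cdot,\partial\Omega)\|_\infty$ from Proposition \ref{yllegokawohlymandoaparar} closes the sandwich. This is a legitimate and in fact well-suited alternative here, since it reuses the torsion function already introduced in the paper and avoids any subsequence extraction; the price is that you invoke the weak comparison principle for $-\Delta_p$ with ordered right-hand sides (standard, via testing with $(\psi_p-\lambda_1^{1/(p-1)}v_p)^+$ and monotonicity) and the $L^\infty$-normalization of $\psi_p$, which is available because $p>n$ ensures $\psi_p\in C(\overline\Omega)$. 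Two small bookkeeping points you should make explicit: the upper bound controls $\lambda_1^{1/p}$, and boundedness of $\lambda_1^{1/(p-1)}$ follows from it only after noting $\lambda_1^{1/(p-1)}=\bigl(\lambda_1^{1/p}\bigr)^{p/(p-1)}$ with $p/(p-1)$ bounded; and the passage from the $1/(p-1)$-root to the $1/p$-root is harmless precisely because the quantities stay in a fixed compact subset of $(0,\infty)$ for large $p$. Neither affects the validity of the proof.
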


 We denote the first $\infty$-eigenvalue by $\Lambda_1(\Omega)$, see \cite{Juutinen-Lindqvist-Manfredi}.

\medskip


\section{The limit problem}\label{section.limit.problem}

In the present section, we characterize uniform limits of appropriate rescalings of solutions of   \eqref{main.problem} as solutions of a PDE.  See  \cite{Charro.Parini, Charro.Parini2, Charro.Peral,Charro.Peral2, Fukagai, Juutinen-Lindqvist-Manfredi} for related results. 

\begin{prop}\label{proposition.limit.eq}
Consider  a sequence $\{(\lambda_p,u_{\lambda_p,p})\}_p$  of solutions of \eqref{main.problem} and assume
\[
\lim_{p\to\infty}
\frac{\lambda_p^{1/p}}{p}=\Lambda.
\]
Then, any uniform limit
\[
u_\Lambda=\lim_{p\to\infty}\frac{u_{\lambda_p,p}}{p}
\]
is a viscosity solution of the problem
\begin{equation}\label{eq:limite.1}
\left\{
\begin{aligned}
&\min\big\{|\nabla{}u|-\Lambda\,e^{u},
-\Delta_{\infty}u\big\}=0&&\textrm{in}\ \Om,\\
&u=0&& \textrm{on}\ \partial\Omega.
\end{aligned}
\right.
\end{equation}

\end{prop}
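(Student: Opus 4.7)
The plan is to pass to the limit in the viscosity formulation of \eqref{main.problem} via test functions, exploiting the $(p-1)$-homogeneity of $F_p$ to isolate the dominant $p$-th power term. First I would fix $\phi\in C^2$ such that $u_\Lambda-\phi$ has a strict local extremum at some $x_0\in\Om$, and by uniform convergence of $v_p:=u_{\lambda_p,p}/p$ to $u_\Lambda$ produce points $x_p\to x_0$ at which $v_p-\phi$ has a local extremum of the same type. Since $u_{\lambda_p,p}-p\phi=p(v_p-\phi)$, the function $p\phi$ is an admissible test function for $u_{\lambda_p,p}$ at $x_p$; using the homogeneity $F_p(t\xi,tX)=t^{p-1}F_p(\xi,X)$, the viscosity subsolution inequality at $x_p$ with $\nabla\phi(x_p)\neq 0$ expands as
\[
-p^{p-1}|\nabla\phi(x_p)|^{p-2}\Delta\phi(x_p)-p^{p-1}(p-2)|\nabla\phi(x_p)|^{p-4}\Delta_\infty\phi(x_p)\leq\lambda_p e^{p v_p(x_p)},
\]
with the reverse inequality in the supersolution case.

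Dividing through by $p^{p-1}(p-2)|\nabla\phi(x_p)|^{p-4}$ and regrouping gives
\[
-\frac{|\nabla\phi(x_p)|^2\Delta\phi(x_p)}{p-2}-\Delta_\infty\phi(x_p)\leq\frac{p}{p-2}|\nabla\phi(x_p)|^{4}\left(\frac{(\lambda_p^{1/p}/p)\,e^{v_p(x_p)}}{|\nabla\phi(x_p)|}\right)^{p}
\]
(with $\geq$ in the supersolution case). Because $\lambda_p^{1/p}/p\to\Lambda$ and $v_p(x_p)\to u_\Lambda(x_0)$, the base of the $p$-th power converges to $\Lambda e^{u_\Lambda(x_0)}/|\nabla\phi(x_0)|$, and the rest of the argument is driven by the position of this limit relative to $1$. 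For the subsolution, if $|\nabla\phi(x_0)|\leq\Lambda e^{u_\Lambda(x_0)}$ the required $\min\leq 0$ follows from the first-order term, while otherwise the $p$-th power vanishes and we recover $-\Delta_\infty\phi(x_0)\leq 0$. For the supersolution, the strict inequality $|\nabla\phi(x_0)|<\Lambda e^{u_\Lambda(x_0)}$ would make the right-hand side diverge while the left stays bounded, a contradiction; hence $|\nabla\phi(x_0)|\geq\Lambda e^{u_\Lambda(x_0)}$, and the same passage to the limit yields $-\Delta_\infty\phi(x_0)\geq 0$. The boundary condition $u_\Lambda=0$ on $\partial\Om$ is inherited directly from the uniform convergence of $v_p$, which vanish on $\partial\Om$.

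The main obstacle I foresee is the degenerate case $\nabla\phi(x_0)=0$ in the supersolution inequality, where the division above is illicit and the pointwise target $|\nabla\phi(x_0)|-\Lambda e^{u_\Lambda(x_0)}\geq 0$ fails at $x_0$; this configuration has to be ruled out. For $p>4$ the operator $F_p$ extends continuously to $\xi=0$ with $F_p(0,X)\equiv 0$, so any subsequence along which $\nabla\phi(x_p)=0$ produces the immediate contradiction $0\geq\lambda_p e^{pv_p(x_p)}>0$. When $\nabla\phi(x_p)$ is nonzero but tends to $0$, a rate comparison settles the issue: the right-hand side of the expanded supersolution inequality grows like $\lambda_p e^{pv_p(x_p)}\sim(p\Lambda e^{u_\Lambda(x_0)})^{p}$, while the dominant term on the left is $p^{p-1}(p-2)|\nabla\phi(x_p)|^{p-4}|\Delta_\infty\phi(x_p)|$; taking $p$-th roots and using $(p-4)/p\to 1$ forces $|\nabla\phi(x_p)|\gtrsim\Lambda e^{u_\Lambda(x_0)}>0$ (when $\Lambda>0$), contradicting $\nabla\phi(x_p)\to 0$. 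With this degenerate configuration excluded, the viscosity formulation of \eqref{eq:limite.1} holds at every interior test point, completing the proof.
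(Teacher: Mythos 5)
Your proposal is correct and takes essentially the same approach as the paper: test the viscosity inequalities with $p\,\phi$ at points $x_p\to x_0$ produced by uniform convergence, and then compare $|\nabla\phi(x_0)|$ with $\Lambda\,e^{u_\Lambda(x_0)}$ through the $p$-th power/root structure as $p\to\infty$. The only difference is bookkeeping: the paper normalizes the inequality by the positive right-hand side $\lambda_p\,e^{u_{\lambda_p,p}(x_p)}$ rather than dividing by $|\nabla\phi(x_p)|^{p-4}$, which makes your separate (and correctly executed) treatment of the degenerate case $\nabla\phi(x_p)=0$ unnecessary.
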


\begin{proof}
Consider a point $x_0\in\Om$ and a function
$\phi\in{C}^{2}(\Om)$ such that $u_\Lambda-\phi$ has a
strict local minimum at $x_0$. As $u_{\Lambda}$ is the uniform
limit of $u_{\lambda_{p},p}/p$, there exists a sequence of
points $x_p\rightarrow{}x_0$ such that $u_{\lambda_{p},p}-p\,\phi$ attains a local minimum at 
 $x_{p}$ for each
$p$. As $u_{\lambda_{p},p}$ is a continuous weak solution  of \eqref{main.problem}, it is also a viscosity solution and a supersolution. Then, we get
\[
\begin{split}
-(p-2)\,p^{p-1}\,|\nabla{}\phi(x_{p})|^{p-4}\,\bigg\{\frac{|\nabla{}\phi(x_{p})|^{2}}{p-2}\Delta{}\phi(x_{p})+\,&\langle{}D^2\phi(x_{p})\nabla{}\phi(x_{p}),\nabla{}\phi(x_{p})\rangle\bigg\}\\
&=-p^{p-1}\Delta_{p}\phi(x_{p})\geq \lambda_{p}\,e^{u_{\lambda,p}(x_{p})}.
\end{split}
\]
Rearranging terms, we obtain
\[
-(p-2)\left[\frac{|\nabla{}\phi(x_{p})|}{\left(\frac{\lambda_{p}}{p^{p-1}}\,e^{u_{\lambda,p}(x_p)}\right)^{\frac{1}{p-4}}}\right]^{p-4}
\bigg\{\frac{|\nabla{}\phi(x_{p})|^{2}}{p-2}\Delta{}\phi(x_{p})
+\langle{}D^2\phi(x_{p})\nabla{}\phi(x_{p}),\nabla{}\phi(x_{p})\rangle\bigg\}\geq1.
\]
If we suppose that
$
|\nabla\phi(x_0)|<\Lambda e^{u_{\Lambda}(x_0)}
$ we obtain a
contradiction letting $p\to\infty$ in the previous
inequality. Thus, it must be
\begin{equation}\label{eq1}
|\nabla\phi(x_0)|-\Lambda e^{u_{\Lambda}(x_0)}\geq0.
\end{equation}
We also have that
\begin{equation}\label{eq2}
-\Delta_{\infty}\phi(x_0)=-\langle{}D^2\phi(x_0)\nabla{}\phi(x_0),\nabla{}\phi(x_0)\rangle\geq0,
\end{equation}
because we would get a contradiction otherwise. Therefore, we can put together \eqref{eq1} and \eqref{eq2} writing
\begin{equation*}
\min\big\{|\nabla\phi(x_0)|-\Lambda e^{u_{\Lambda}(x_0)},
-\Delta_{\infty}\phi(x_0)\big\}\geq0,
\end{equation*}
and conclude that $u_{\Lambda}$ is a viscosity supersolution
of \eqref{eq:limite.1}.

It remains to show that $u_{\Lambda}$ is a viscosity
subsolution of the limit equation \eqref{eq:limite.1}. More precisely, we
have to show that, for each $x_0\in\Om$ and
$\phi\in{C}^2(\Om)$ such that $u_{\Lambda}-\phi$ attains a
strict local maximum at $x_0$ (note that $x_0$ and $\phi$ are not
the same than before) we have
\[
\min\left\{|\nabla\phi(x_0)|-\Lambda e^{u_{\Lambda}(x_0)},
-\Delta_{\infty}\phi(x_0)\right\}\leq0.
\]
We can suppose that
\[
|\nabla\phi(x_0)|>\Lambda e^{u_{\Lambda}(x_0)},
\]
since  we are done  otherwise. Again, the uniform
convergence of $u_{\lambda,p}/p$ to $u_{\Lambda}$ provides 
a sequence of points $x_{p}\rightarrow{}x_0$ which are local maxima of
$u_{\lambda,p}-p\,\phi$. Recalling the definition of viscosity subsolution we
have
\[
\begin{split}
-(p-2)\left[\frac{|\nabla{}\phi(x_{p})|}{\left(
\frac{\lambda_p}{p^{p-1}}  e^{{u_{\lambda,p}(x_{p})}}\right)^{\frac{1}{p-4}}}\right]^{p-4}
\bigg\{\frac{|\nabla{}\phi(x_{p})|^{2}}{p-2}\Delta{}\phi(x_{p})
+\langle{}D^2\phi(x_{p})\nabla{}\phi(x_{p}),\nabla{}\phi(x_{p})\rangle\bigg\}\leq1,
\end{split}
\]
for each  $p$. Letting $p\rightarrow\infty$, we find
$-\Delta_{\infty}\phi(x_0)\leq0$, or else we get a
contradiction.
\end{proof}

In the previous argument, the fact that $e^{u_\Lambda(x_0)}$ is strictly positive independently of the value of $u_\Lambda(x_0)$ makes a difference with the case with a  power-type right-hand side (see  \cite{Charro.Parini, Charro.Parini2, Charro.Peral,Charro.Peral2, Fukagai, Juutinen-Lindqvist-Manfredi}), where one needs to make sure that $u_\Lambda>0$ in $\Omega$. Furthermore, in the power-type right-hand side case, one can consider sign-changing solutions, see
\cite{Charro.Parini,Juutinen-Lindqvist} and get a more involved limit equation that takes into account sign changes. In the next result, we show that all solutions to 
 the limit problem \eqref{eq:limite.1} are positive. Moreover, we show that solutions cannot be arbitrarily small for every given $\Lambda$ and must grow (at least) linearly from the boundary.

\begin{prop}\label{noexist.small}
Let $\Omega\subset\R^n$ be a bounded domain and $\Lambda>0$. Then, every solution $u_\Lambda$ of \eqref{eq:limite.1} verifies
\[
u_{\Lambda}\geq \Lambda\,\textnormal{dist}(\cdot,\partial\Omega)\quad\text{in}\ \Omega.
\]
In particular, every solution of \eqref{eq:limite.1} is strictly positive and satisfies the estimate 
\begin{equation*}
\|u_{\Lambda}\|_{L^\infty(\Omega)}\geq\Lambda\Lambda_1(\Omega)^{-1}.
\end{equation*}
\end{prop}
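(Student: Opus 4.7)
The plan is to compare $u_\Lambda$ with the scaled distance function $\Lambda\,\textnormal{dist}(\cdot,\partial\Omega)$ by invoking the comparison principle of Lemma~\ref{comparison.f(x)}. The key observation is that $\textnormal{dist}(\cdot,\partial\Omega)$ already characterizes the viscosity solution of the model problem with constant right-hand side $1$ (Proposition~\ref{yllegokawohlymandoaparar}), and this transfers to a right-hand side $\Lambda$ by scaling. Once the pointwise inequality is established, the $L^\infty$ bound will follow immediately from Lemma~\ref{lema.autovalores}.

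The first step is to show that $u_\Lambda\ge 0$ in $\Omega$. Since $u_\Lambda$ is a viscosity supersolution of $\min\{|\nabla u|-\Lambda e^u,\,-\Delta_\infty u\}=0$, at any point where a smooth $\phi$ touches $u_\Lambda$ from below one has, in particular, $-\Delta_\infty\phi(x_0)\ge 0$. Thus $u_\Lambda$ is $\infty$-superharmonic, and the minimum principle (see \cite{lindqvist-manfredi2}) gives $u_\Lambda\ge \min_{\partial\Omega}u_\Lambda=0$ in $\Omega$.

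The second step is to bootstrap this nonnegativity. Since $e^{u_\Lambda}\ge 1$, the supersolution condition at any admissible test $\phi$ yields both $|\nabla\phi|\ge \Lambda e^{u_\Lambda(x_0)}\ge\Lambda$ and $-\Delta_\infty\phi(x_0)\ge 0$, so $u_\Lambda$ is a viscosity supersolution of
\[
\min\{|\nabla u|-\Lambda,\,-\Delta_\infty u\}=0\quad\text{in }\Omega.
\]
By Proposition~\ref{yllegokawohlymandoaparar}, $\textnormal{dist}(\cdot,\partial\Omega)$ is the unique viscosity solution of the analogous problem with constant $1$ on the right-hand side. A direct scaling argument (multiplying a solution by $\Lambda>0$ scales $|\nabla v|-1$ into $\Lambda^{-1}(|\nabla w|-\Lambda)$ and $-\Delta_\infty v$ into $-\Lambda^{-3}\Delta_\infty w$, both preserving signs) shows that $\Lambda\,\textnormal{dist}(\cdot,\partial\Omega)$ is a viscosity solution of the equation just displayed, with zero boundary values. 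Applying Lemma~\ref{comparison.f(x)} with $f\equiv\Lambda>0$, subsolution $\Lambda\,\textnormal{dist}(\cdot,\partial\Omega)$ and supersolution $u_\Lambda$, gives
\[
u_\Lambda(x)\ge \Lambda\,\textnormal{dist}(x,\partial\Omega)\quad\text{for all }x\in\Omega.
\]

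Strict positivity in $\Omega$ follows at once since $\textnormal{dist}(\cdot,\partial\Omega)>0$ in $\Omega$. Taking the supremum and using Lemma~\ref{lema.autovalores} yields
\[
\|u_\Lambda\|_{L^\infty(\Omega)}\ge \Lambda\,\|\textnormal{dist}(\cdot,\partial\Omega)\|_{L^\infty(\Omega)}=\Lambda\,\Lambda_1(\Omega)^{-1}.
\]
The only point that requires care is the viscosity-level verification of the scaling, but this is routine because multiplication by a positive constant preserves the sign of every term in the $\min$-equation; the rest of the argument is a direct application of the comparison principle already established in Section~\ref{sect.prelim}.
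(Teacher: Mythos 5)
Your proof is correct and follows essentially the same route as the paper: establish $u_\Lambda\ge 0$, use $e^{u_\Lambda}\ge 1$ to see that $u_\Lambda$ is a viscosity supersolution of $\min\{|\nabla u|-\Lambda,-\Delta_\infty u\}=0$, identify $\Lambda\,\textnormal{dist}(\cdot,\partial\Omega)$ as the solution of that problem, and conclude by Lemma~\ref{comparison.f(x)} and Lemma~\ref{lema.autovalores}. The only cosmetic differences are that you justify nonnegativity via the minimum principle for $\infty$-superharmonic functions rather than via the comparison lemma, and you spell out the scaling argument behind $\Lambda\,\textnormal{dist}$ being a solution, which the paper takes from Proposition~\ref{yllegokawohlymandoaparar} directly.
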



\begin{proof}
 Let  $u_{\Lambda}$ be a  solution of  \eqref{eq:limite.1}. Then, $u_{\Lambda}\geq0$ in $\Omega$ by Lemma \ref{comparison.f(x)}. Let us show that 
\[
\min\{|\nabla u_{\Lambda}|-\Lambda,-\Delta_\infty u_{\Lambda}\}\geq0\quad\text{in}\ \Omega
\]
in the viscosity sense.
To see this, consider $x_0\in\Omega$ and
$\phi\in{C}^2$ such that $u_\Lambda-\phi$ has a minimum at
$x_0$. Since $ u_\Lambda(x)$ is a solution of  \eqref{eq:limite.1}, we have
\[
\min\left\{|\nabla{}\phi(x_0)|-\Lambda\,e^{ u_{\Lambda}(x_0)},
-\Delta_{\infty}\phi(x_0)\right\}\geq0\quad\text{in}\ \Omega.
\]
We deduce $-\Delta_{\infty}\phi(x_0)\geq0$ and
$|\nabla{}\phi(x_0)|\geq\Lambda\,e^{ u_{\Lambda}(x_0)}\geq\Lambda$ and we get
\[
\min\left\{|\nabla{}\phi(x_0)|-\Lambda,
-\Delta_{\infty}\phi(x_0)\right\}\geq0\quad\text{in}\ \Omega
\]
as desired.

On the other hand,  $v_\Lambda(x)=\Lambda\,\textnormal{dist}(x,\partial\Omega)$ is the unique viscosity solution of 
\[
\min\{|\nabla v_\Lambda|-\Lambda,-\Delta_\infty v_{\Lambda}\}=0\quad\text{in}\ \Omega.
\]
Then,  one gets  $u_{\Lambda}\geq v_\Lambda=\Lambda\,\textnormal{dist}(\cdot,\partial\Omega)$ by comparison, see 
Lemma \ref{comparison.f(x)}.
\end{proof}


\medskip


\section{Comparison for small solutions of the limit problem}\label{comparison}

In this section, we prove a comparison principle for small solutions of the limit equation \eqref{limite1}. This result is interesting for two main  reasons. Firstly, equation \eqref{limite1} is not proper in the terminology of \cite{CIL}, a basic requirement for comparison.  Secondly, based on the multiplicity results for the $p$-Laplacian equation \eqref{main.problem}, see  \cite{GP2, GPP}, one cannot expect comparison to hold in general. The key idea is a change of variables that allows us to obtain a proper equation for solutions with $\|u\|_\infty<1$. Remarkably, minimal solutions of \eqref{limite1} verify this condition (see Section \ref{existence.minimal.limit.problem} below), and we can conclude  they are the only ones with $\|u\|_\infty<1$. The change of variables  we use here is the same that was used to prove comparison for the limit problem with concave right-hand side in \cite{Charro.Peral}.

We prove a more general result with a ``right-hand" side $f(u)$ that satisfies a hypothesis reminiscent of the celebrated Brezis-Oswald condition, see \cite{Brezis-Oswald} and Remark \ref{Brezis.Oswald.condition} below. 
\begin{theorem}\label{theorem.ppio.comparacion}
Let   $f:\mathbb{R}\to\mathbb{R}$ be a continuous  function for which there exist $c\in(0,\infty]$ and $q\in(0,1)$ such that 
\begin{equation}\label{condicion.crecimiento.f.inflap}
\frac{f(t)}{t^q}\quad\text{is positive and non-increasing for all}\ t\in(0,c).
\end{equation}
Let $\Omega\subset\R^n$ be a bounded domain and let 
$u,v\in C(\overline\Omega)$ with $\max\{\|u\|_\infty,\|v\|_\infty\}<c$ be, respectively, a positive viscosity sub- and supersolution of
\begin{equation}\label{eq:ppio.comp}
\min\left\{|\nabla{}w|-f(w),
-\Delta_{\infty}w\right\}=0\quad\textrm{in}\ \Omega.
\end{equation}
 Then, whenever $u\leq{}v$ on
$\partial\Omega$, we have $u\leq{}v$ in $\overline{\Omega}$.
\end{theorem}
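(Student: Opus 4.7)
The plan is to reduce comparison for \eqref{eq:ppio.comp} to comparison for a \emph{proper} equation in the sense of \cite{CIL} via a nonlinear change of variables; the direct equation fails to be proper because $-f(w)$ enters with the ``wrong'' sign. Setting $\beta := 1-q \in (0,1)$, I would introduce $U := u^{\beta}$ and $V := v^{\beta}$. Since $u, v$ are positive and continuous with $\max(\|u\|_\infty, \|v\|_\infty) < c$, the functions $U, V$ are positive, continuous, bounded above by $c^{\beta}$, satisfy $U \le V$ on $\partial\Omega$, and obey $U \le V$ in $\overline\Omega$ if and only if $u \le v$ in $\overline\Omega$. This is precisely the transformation used for the concave case in \cite{Charro.Peral}.

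A direct chain-rule computation---valid in the viscosity sense because $t \mapsto t^{\beta}$ is smooth and strictly increasing on $(0,\infty)$---gives that for a smooth positive $w$ with $W = w^{\beta}$,
\[
|\nabla w| = \beta^{-1} W^{(1-\beta)/\beta} |\nabla W|,\qquad \Delta_\infty w = \beta^{-3} W^{3(1-\beta)/\beta}\Bigl[\Delta_\infty W + \tfrac{1-\beta}{\beta}\, W^{-1}\, |\nabla W|^4\Bigr].
\]
Plugging into \eqref{eq:ppio.comp} and dividing through by the positive prefactors yields that $U$ is a viscosity subsolution, and $V$ a viscosity supersolution, of
\[
\min\Bigl\{|\nabla W| - F(W),\ -\Delta_\infty W - \tfrac{1-\beta}{\beta}\, W^{-1}\, |\nabla W|^4\Bigr\} = 0 \quad\text{in } \Omega,
\]
where $F(W) := \beta\cdot f(t)/t^q\big|_{t = W^{1/\beta}}$. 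By hypothesis \eqref{condicion.crecimiento.f.inflap}, $F$ is non-increasing in $W$, and $W \mapsto -W^{-1}$ is strictly increasing; hence both branches of the $\min$ are non-decreasing in $W$ at fixed gradient and Hessian. The transformed equation is therefore proper, restoring access to the standard viscosity comparison machinery.

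To finish, I would assume for contradiction $\sup_{\overline\Omega}(U-V) > 0$ and run the classical doubling-of-variables argument with penalty $U(x) - V(y) - |x-y|^2/(2\epsilon)$; for small $\epsilon$ the maximum is attained at interior points $(x_\epsilon, y_\epsilon)$, and the Crandall--Ishii--Lions theorem supplies a common gradient $p_\epsilon = (x_\epsilon - y_\epsilon)/\epsilon$ together with matrices $X_\epsilon \le Y_\epsilon$ in the respective semijets. The supersolution forces $|p_\epsilon| \ge F(V(y_\epsilon)) > 0$ and $-\langle Y_\epsilon p_\epsilon, p_\epsilon\rangle \ge \tfrac{1-\beta}{\beta}\, V(y_\epsilon)^{-1}\, |p_\epsilon|^4$. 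If the subsolution is realised by the $\infty$-Laplacian branch, combining with $X_\epsilon \le Y_\epsilon$ and the \emph{strict} monotonicity of $W \mapsto W^{-1}$ gives $V(y_\epsilon) \ge U(x_\epsilon)$, contradicting $U(x_\epsilon) - V(y_\epsilon) \to \sup(U-V) > 0$. If instead it is realised by the eikonal branch, one obtains $F(V(y_\epsilon)) \le F(U(x_\epsilon))$ while $U(x_\epsilon) > V(y_\epsilon)$; this residual degeneracy (when $F$ is only non-strictly decreasing) is removed by replacing $q$ by any $q' \in (q, 1)$, for which $f(t)/t^{q'} = (f(t)/t^q)\cdot t^{q-q'}$ is strictly decreasing as a product of a non-increasing positive function and a strictly decreasing positive function, and rerunning the argument with $\beta' := 1-q'$. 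The main obstacle is precisely the failure of properness of the original equation; the change of variables is what unveils the strict monotonicity of the $\infty$-Laplacian branch through the singular factor $W^{-1}$, and this is what ultimately drives the comparison.
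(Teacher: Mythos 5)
Your proposal is correct, and its skeleton coincides with the paper's: the same power change of variables $U=u^{1-q}$, $V=v^{1-q}$ leading to the transformed (proper) equation with the extra term $\tfrac{q}{1-q}W^{-1}|\nabla W|^4$, followed by doubling of variables and a case analysis on the two branches of the $\min$. Where you genuinely diverge is in how strictness is produced. The paper does not touch $q$; instead it perturbs the supersolution, $\tilde v_\epsilon=(1+\epsilon)(\tilde v+\epsilon)$, and proves (Lemma \ref{lema.strict.supersolution.inflap}) that this is a \emph{strict} supersolution with a uniform gap $C(\epsilon,q,\|\tilde v\|_\infty)>0$, which then yields the contradiction $0<C\le 0$ in all four cases. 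You avoid that lemma entirely by two observations: (a) in the Hessian-branch case the supersolution's eikonal constraint forces the common gradient to satisfy $|p_\epsilon|\ge F(V(y_\epsilon))>0$, so dividing by $|p_\epsilon|^4$ and using $X_\epsilon\le Y_\epsilon$ gives $U(x_\epsilon)\le V(y_\epsilon)$, contradicting $U(x_\epsilon)-V(y_\epsilon)\ge\max_{\overline\Omega}(U-V)>0$ (valid for each fixed $\epsilon$, no limit needed); and (b) in the eikonal-branch case, where non-strict monotonicity of $f(t)/t^q$ only gives $0\le 0$, you restore strictness by replacing $q$ with $q'\in(q,1)$, noting $f(t)/t^{q'}=(f(t)/t^{q})\,t^{q-q'}$ is strictly decreasing on $(0,c)$ and that hypothesis \eqref{condicion.crecimiento.f.inflap} and the smallness condition are unaffected (same $c$). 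Both devices are legitimate; the paper's buys a quantitative strict supersolution (the construction reused from \cite{Charro.Peral}) without altering the exponent, while yours is shorter, trading the perturbation lemma for a harmless ``without loss of generality'' step. Two small wording points: properness of the transformed operator by itself is not a black-box guarantee of comparison (you in fact prove it by hand, so this is only a phrasing issue), and in case (a) the contradiction is with the inequality $U(x_\epsilon)-V(y_\epsilon)\ge\max(U-V)>0$ at fixed $\epsilon$, not with the limit as $\epsilon\to0$.
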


\begin{remark}\label{Brezis.Oswald.condition}
It is possible to prove a comparison principle for equation \eqref{eq:ppio.comp} under the  Brezis-Oswald \cite{Brezis-Oswald} condition
\[
\frac{f(t)}{t}\quad\text{is decreasing for all}\  t > 0.
\]
Under this condition, the power-type change of variables used in \cite{Charro.Peral} and in the proof of Theorem ~\ref{theorem.ppio.comparacion} no longer applies. Instead, we need a logarithmic change of variables, similarly to the comparison principle for the eigenvalue problem for the infinity Laplacian in \cite{Juutinen-Lindqvist-Manfredi}. However, a viscosity comparison principle obtained through a logarithmic change of variables requires that either the sub- or the supersolution are strictly positive in $\overline\Omega$ and does not allow us to conclude uniqueness of solutions for the Dirichlet problem with homogeneous boundary data, which our result does.
\end{remark}


As a consequence of Theorem \ref{theorem.ppio.comparacion}, we have uniqueness of ``small" solutions of problem \eqref{eq:limite.1}.
\begin{corollary}\label{corolario.unicidad.limit.concavo}
Let $\Omega\subset\R^n$ be a bounded domain.
For every $\Lambda>0$, the problem 
\begin{equation}\label{corolry.eq:ppio.comp}
\left\{
\begin{aligned}
&\min\left\{|\nabla{}u|-\Lambda\,e^{u},
-\Delta_{\infty}u\right\}=0&&\textrm{in}\ \Omega,\\
&u=0&& \text{on}\ \partial\Omega,
\end{aligned}
\right.
\end{equation}
has at most one viscosity solution with $\|u\|_\infty< 1$.
\end{corollary}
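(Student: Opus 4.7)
The plan is to deduce the corollary directly from Theorem \ref{theorem.ppio.comparacion} applied to $f(t)=\Lambda\,e^{t}$, together with the positivity estimate from Proposition \ref{noexist.small}. The corollary then follows from a standard symmetry argument: if $u_1$ and $u_2$ are two solutions with $\|u_i\|_\infty<1$, then each is both a viscosity sub- and a supersolution of \eqref{corolry.eq:ppio.comp}, so applying the comparison theorem in both directions gives $u_1\le u_2$ and $u_2\le u_1$ in $\overline\Omega$, hence $u_1=u_2$.

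The nontrivial part is verifying the growth hypothesis \eqref{condicion.crecimiento.f.inflap} with $f(t)=\Lambda e^t$. A direct computation for $g(t)=e^t/t^q$ gives
\[
g'(t)=\frac{e^t\,(t-q)}{t^{q+1}},
\]
so $g$ is strictly decreasing on $(0,q)$ and strictly increasing on $(q,\infty)$. Thus $f(t)/t^q$ is positive and non-increasing precisely on $(0,q]$, which forces $c\le q<1$ in the hypothesis of the theorem. At first glance this seems insufficient to cover the whole range $\|u\|_\infty<1$, but the point is that we are allowed to choose $q$ (and hence $c$) depending on the pair of solutions.

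Concretely, given $u_1,u_2\in C(\overline\Omega)$ solutions of \eqref{corolry.eq:ppio.comp} with $\|u_i\|_\infty<1$, set
\[
M=\max\{\|u_1\|_\infty,\|u_2\|_\infty\}<1,
\]
and pick any $q\in(M,1)$. With $c:=q$, the computation above shows that $f(t)/t^q=\Lambda e^t/t^q$ is positive and non-increasing on $(0,c)$, and by construction $\max\{\|u_1\|_\infty,\|u_2\|_\infty\}<c$. Proposition \ref{noexist.small} gives $u_i>0$ in $\Omega$, so the positivity requirement of Theorem \ref{theorem.ppio.comparacion} is met. Since $u_1=u_2=0$ on $\partial\Omega$, the theorem yields $u_1\le u_2$ and, by symmetry, $u_2\le u_1$ in $\overline\Omega$, completing the proof.

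The main (only) obstacle is the subtlety just noted about the admissible range of $q$: one must observe that the theorem's hypothesis is qualitative and that the freedom in choosing $q$ suffices, even though no single $q\in(0,1)$ makes $e^t/t^q$ non-increasing on the full interval $(0,1)$. Once this is recognized, the argument reduces to a mechanical verification and a symmetric application of the comparison principle.
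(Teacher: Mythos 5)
Your proposal is correct and follows essentially the same route as the paper: apply Theorem \ref{theorem.ppio.comparacion} with $f(t)=\Lambda e^{t}$, noting that \eqref{condicion.crecimiento.f.inflap} holds with $c=q$ and choosing $q\in(0,1)$ with $\max\{\|u_1\|_\infty,\|u_2\|_\infty\}<q$, together with positivity from Proposition \ref{noexist.small} and the symmetric use of the comparison principle. Your explicit derivative computation for $e^{t}/t^{q}$ just makes the paper's remark that the hypothesis is ``satisfied with $c=q$'' fully transparent.
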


\begin{proof}[Proof of Corollary \ref{corolario.unicidad.limit.concavo}]
 Suppose for the sake of contradiction that there are two viscosity solutions, $u,v$ of \eqref{corolry.eq:ppio.comp} with $\max\{\|u\|_\infty,\|v\|_\infty\}<1.$  Notice that  both $u$ and $v$ are  strictly positive in $\Omega$ by Proposition \ref{noexist.small}.
 In this case we have $f(t)=\Lambda\,e^t$ and \eqref{condicion.crecimiento.f.inflap} is satisfied with $c=q$ for every $q\in(0,1)$.
 Then, we can choose $q\in(0,1)$ such that 
 $ \max\{\|u\|_\infty,\|v\|_\infty\}<q<1,$
 and all the hypotheses of Theorem \ref{theorem.ppio.comparacion} are satisfied. Because $u=v$ on $\partial \Omega,$  we conclude $u\equiv v$.
\end{proof}

In the next lemma we apply a change of variables to equation \eqref{eq:ppio.comp}.

\begin{lemma}\label{lemma.cambio.variables.inflap}
Let $q\in(0,1)$ and let $v$ be a positive viscosity supersolution (respectively, subsolution) of \eqref{eq:ppio.comp} in $\Omega$. Then,
$\tilde{v}(x)=v^{1-q}(x)$ is a viscosity supersolution (subsolution) of
\begin{equation}\label{eq:15}
\min\left\{
|\nabla{}\tilde{w}(x)|
-
 (1-q)\,\frac{f\left({\tilde{w}(x)^{\frac{1}{1-q}}}\right)}{\tilde{w}(x)^{\frac{q}{1-q}}},
-\Delta_{\infty}\tilde{w}(x)-\frac{q}{1-q}\frac{|\nabla{}\tilde{w}(x)|^4}{\tilde{w}(x)}\right\}=0
\end{equation}
in  every subdomain $U$ compactly contained in $\Omega$.
\end{lemma}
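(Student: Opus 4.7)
The plan is to transplant test functions from the $\tilde v$ side to the $v$ side via the map $s\mapsto s^{1/(1-q)}$ and apply the viscosity definition directly. Write $\alpha:=1/(1-q)>1$, so that $\tilde v=v^{1-q}=v^{1/\alpha}$ and conversely $v=\tilde v^{\alpha}$. Since $v>0$ in $\Omega$ and $\overline U\subset\Omega$ is compact, both $v$ and $\tilde v$ are continuous and bounded away from $0$ on $\overline U$, which guarantees that the change of variables is smooth wherever it is applied.

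For the supersolution case, I would take any $\tilde\phi\in C^{2}$ touching $\tilde v$ from below at $x_0\in U$. After the usual reduction we may assume $\tilde\phi(x_0)=\tilde v(x_0)>0$, so $\tilde\phi>0$ on a neighborhood of $x_0$ and $\phi:=\tilde\phi^{\alpha}$ is well defined and $C^{2}$ there. Monotonicity of $s\mapsto s^{\alpha}$ on $(0,\infty)$ together with $\tilde v\geq\tilde\phi$ nearby gives $v=\tilde v^{\alpha}\geq\tilde\phi^{\alpha}=\phi$ with equality at $x_0$, so $\phi$ is an admissible test from below for $v$. The supersolution inequality for $v$ yields
\[
|\nabla\phi(x_0)|\geq f(\phi(x_0)) \qquad\text{and}\qquad -\Delta_{\infty}\phi(x_0)\geq 0.
\]
A direct chain-rule computation gives
\[
\nabla\phi=\alpha\,\tilde\phi^{\alpha-1}\nabla\tilde\phi,\qquad
\Delta_{\infty}\phi=\alpha^{3}\tilde\phi^{3(\alpha-1)}\Delta_{\infty}\tilde\phi+\alpha^{3}(\alpha-1)\tilde\phi^{3\alpha-4}|\nabla\tilde\phi|^{4}.
\]
Using $\alpha-1=q/(1-q)$, $\alpha^{3}(\alpha-1)=q/(1-q)^{4}$, and $(3\alpha-4)-3(\alpha-1)=-1$, I would divide each inequality above by its strictly positive prefactor to recover exactly
\[
|\nabla\tilde\phi(x_0)|\geq(1-q)\,\frac{f\!\left(\tilde\phi(x_0)^{1/(1-q)}\right)}{\tilde\phi(x_0)^{q/(1-q)}},\qquad -\Delta_{\infty}\tilde\phi(x_0)-\frac{q}{1-q}\,\frac{|\nabla\tilde\phi(x_0)|^{4}}{\tilde\phi(x_0)}\geq 0,
\]
which is the supersolution condition for \eqref{eq:15}. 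The subsolution case is symmetric: with $\tilde\phi$ touching $\tilde v$ from above at $x_0\in U$, $\phi=\tilde\phi^{\alpha}$ is an admissible test from above for $v$, and the subsolution alternative $|\nabla\phi(x_0)|\leq f(\phi(x_0))$ or $-\Delta_{\infty}\phi(x_0)\leq 0$ translates, through the same identities, into the corresponding alternative for \eqref{eq:15}, yielding $\min\{\cdot,\cdot\}\leq 0$.

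The step I expect to be most delicate is not the algebra but the analytic legwork around the change of variables: one must verify that the touching test function can genuinely be assumed positive on a full neighborhood of $x_0$, which is precisely where positivity of $\tilde v$ on $\overline U$ enters and why the statement is localized to $U\Subset\Omega$ rather than formulated up to $\partial\Omega$, where $\tilde v$ may vanish. A secondary point worth spelling out is the case $\nabla\tilde\phi(x_0)=0$: on the supersolution side it would force $0=|\nabla\phi(x_0)|\geq f(\phi(x_0))>0$, which is impossible under \eqref{condicion.crecimiento.f.inflap}, so this case does not occur; on the subsolution side it makes the $\infty$-Laplacian alternative trivially satisfied and the transformed inequality reduces to $0\leq 0$. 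Once these technicalities are addressed, the remainder of the proof is bookkeeping of the exponents $1/(1-q)$ and $q/(1-q)$.
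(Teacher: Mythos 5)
Your proposal is correct and follows essentially the same route as the paper: transplant the test function via $\phi=\tilde\phi^{1/(1-q)}$, note positivity of $\tilde v$ makes $\phi$ a $C^2$ admissible test near $x_0$, compute $\nabla\phi$ and $D^2\phi$ by the chain rule, and divide out the positive prefactors to obtain the transformed min-inequality. The extra remarks on the case $\nabla\tilde\phi(x_0)=0$ are harmless but not needed, since dividing each term of the min by its strictly positive factor already preserves the inequalities.
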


\begin{proof}
Let $\tilde{\phi}\in C^2(\Omega)$ touch $\tilde{v}$ from below at $x_0\in\Omega$. If we define
$\phi(x)=\tilde{\phi}(x)^{\frac{1}{1-q}}$, then  $\phi$ touches 
$v$ from below at $x_0$. Note that  $\phi(x)$ is $ C^2$
in a neighborhood of $x_0$, since $v>0$ in $\Omega$ implies
$\tilde{\phi}(x)>0$ around $x_0$. Then
\begin{eqnarray*}
&&\nabla\phi(x_0)=\frac{1}{1-q}\,\tilde{\phi}(x_0)^{\frac{q}{1-q}}\,\nabla\tilde{\phi}(x_0),\\
&&D^2\phi(x_0)=\frac{1}{1-q}\,\tilde{\phi}(x_0)^{\frac{q}{1-q}}D^2\tilde{\phi}(x_0)+\frac{q}{(1-q)^2}\,\tilde{\phi}(x_0)^{\frac{2q-1}{1-q}}\,\nabla\tilde{\phi}(x_0)\otimes\nabla\tilde{\phi}(x_0).
\end{eqnarray*}
Because $v$ is a  viscosity supersolution of \eqref{eq:ppio.comp} and $\phi(x_0)=v(x_0)>0$, we have
\begin{eqnarray*}
0&\leq&\min\left\{|\nabla{}\phi(x_0)|- f\big({\phi(x_0)}\big),
-\langle{}D^2\phi(x_0)\nabla\phi(x_0),\nabla\phi(x_0)\rangle\right\}\\
&=&\min\Bigg\{
\frac{1}{1-q}\,\tilde{\phi}(x_0)^{\frac{q}{1-q}}\Bigg(|\nabla\tilde{\phi}(x_0)|- (1-q)\,\frac{f\Big({\tilde{\phi}(x_0)^{\frac{1}{1-q}}}\Big)}{\tilde{\phi}(x_0)^{\frac{q}{1-q}}}\Bigg),\\
&&\hspace{30pt}-\Big(\frac{1}{1-q}\,\tilde{\phi}(x_0)^{\frac{q}{1-q}}\Big)^3\,\left(\Delta_\infty\tilde{\phi}(x_0)+\frac{q}{1-q}\,\frac{|\nabla\tilde{\phi}(x_0)|^4}{\tilde{\phi}(x_0)}\right)\Bigg\}.
\end{eqnarray*}
Therefore,
\[
\min\left\{
|\nabla\tilde{\phi}(x_0)|- (1-q)\,\frac{f\Big({\tilde{\phi}(x_0)^{\frac{1}{1-q}}}\Big)}{\tilde{\phi}(x_0)^{\frac{q}{1-q}}},
-
\Delta_\infty\tilde{\phi}(x_0)-\frac{q}{1-q}\,\frac{|\nabla\tilde{\phi}(x_0)|^4}{\tilde{\phi}(x_0)}
\right\}
\geq
0,
\]
that is,  $\tilde{v}$ is a viscosity supersolution  of
\eqref{eq:15}. The subsolution case is analogous.
\end{proof}

Equation \eqref{eq:15} is given by the functional
\[
\begin{aligned}
\mathcal{F}:\R^+\times\R^n\times{}\mathcal{S}^n&\longrightarrow&\R&\\
(t,p,X)&\longrightarrow&&
\hspace{-9pt}\min\left\{|p|-
 (1-q)
f\Big(t^{\frac{1}{1-q}}\Big)t^{-\frac{q}{1-q}},
-\langle{}Xp,p\rangle-\frac{q}{1-q}\,\frac{|p|^4}{t}\right\},
\end{aligned}
\]
which is  degenerate elliptic and non-decreasing in $t$ for $0<t<c^{1-q}$ by hypothesis \eqref{condicion.crecimiento.f.inflap}. Under these conditions, it is well-known (see  \cite[Section 5.C]{CIL}) that it is possible to establish a comparison  principle when  the supersolution or the subsolution are strict. In the next lemma we show that  we can find a perturbation of the  supersolution that is a strict supersolution, see  \cite{Charro.Peral,JuutinenTESIS,Juutinen-Lindqvist-Manfredi} for related constructions.

\begin{lemma}\label{lema.strict.supersolution.inflap}
Consider a subdomain $U$ compactly contained in $\Omega$, and $q\in(0,1),$ $c>0$ as in \eqref{condicion.crecimiento.f.inflap}. Let $\tilde{v}>0$ with $\|\tilde{v}\|_\infty<c^{1-q}$  be a viscosity supersolution  of
\eqref{eq:15} in $U$.
Define
\begin{equation}\label{cap3:eq:17}
\tilde{v}_\epsilon(x)=(1+\epsilon)\big(\tilde{v}(x)+\epsilon\big).
\end{equation}
Then,  $\tilde{v}_\epsilon\to \tilde{v}$ uniformly in  $\overline{U}$ as $\epsilon\to0$,
and for every $\epsilon>0$ small enough,
there exists a positive constant $C=C(\epsilon,q,\|\tilde{v}\|_\infty)$ such that
\begin{equation}\label{lemma.supersol.strict.estatement}
\min\left\{
|\nabla\tilde{v}_{\epsilon}(x)|  - (1-q)\,\frac{f\left({\tilde{v}_\epsilon(x)^{\frac{1}{1-q}}}\right)}{\tilde{v}_\epsilon(x)^{\frac{q}{1-q}}}
,-\Delta_\infty\tilde{v}_{\epsilon}(x)-\frac{q}{1-q}\,\frac{|\nabla{}\tilde{v}_{\epsilon}(x)|^4}{\tilde{v}_\epsilon(x)}\right\}
\geq
C>0\quad\textrm{in}\ U,
\end{equation}
in the viscosity sense, that is, $\tilde{v}_\epsilon$
is a strict viscosity supersolution of \eqref{eq:15} in $U$ with $\|\tilde{v}_\epsilon\|_\infty<c^{1-q}$.
 \end{lemma}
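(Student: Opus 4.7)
The convergence $\tilde v_\epsilon\to\tilde v$ uniformly in $\overline{U}$ as $\epsilon\to 0^+$ and the bound $\|\tilde v_\epsilon\|_\infty<c^{1-q}$ for $\epsilon$ small follow immediately from the definition \eqref{cap3:eq:17} together with $\|\tilde v\|_\infty<c^{1-q}$. The substance of the lemma is to verify \eqref{lemma.supersol.strict.estatement} in the viscosity sense with a positive constant $C$ that is uniform in the test function. The strategy is the affine change of test function $\psi(x)=\phi(x)/(1+\epsilon)-\epsilon$: if $\phi\in C^2$ touches $\tilde v_\epsilon$ from below at $x_0\in U$, then $\psi(x_0)=\tilde v(x_0)$ and $\psi\le\tilde v$ near $x_0$, with $\nabla\psi(x_0)=\nabla\phi(x_0)/(1+\epsilon)$ and $D^2\psi(x_0)=D^2\phi(x_0)/(1+\epsilon)$. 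Inserting $\psi$ into the supersolution condition for $\tilde v$ in \eqref{eq:15} and clearing powers of $(1+\epsilon)$ yields the two pointwise inequalities
\[
|\nabla\phi(x_0)|\geq(1+\epsilon)(1-q)\,\frac{f(\tilde v(x_0)^{1/(1-q)})}{\tilde v(x_0)^{q/(1-q)}},\qquad -\Delta_\infty\phi(x_0)\geq\frac{q}{1-q}\cdot\frac{|\nabla\phi(x_0)|^4}{(1+\epsilon)\,\tilde v(x_0)}.
\]

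To bound the first slot of \eqref{lemma.supersol.strict.estatement} from below, I would use the monotonicity hypothesis \eqref{condicion.crecimiento.f.inflap}: since $\tilde v_\epsilon(x_0)>\tilde v(x_0)$ and both, raised to $1/(1-q)$, still lie in $(0,c)$, one has $f(\tilde v_\epsilon(x_0)^{1/(1-q)})/\tilde v_\epsilon(x_0)^{q/(1-q)}\le f(\tilde v(x_0)^{1/(1-q)})/\tilde v(x_0)^{q/(1-q)}$. Combined with the gradient inequality above, this yields a strict excess of at least $\epsilon(1-q)\,m$, where $m:=f(\|\tilde v\|_\infty^{1/(1-q)})/\|\tilde v\|_\infty^{q/(1-q)}>0$ is another consequence of the same monotonicity, giving a pointwise lower bound uniform in $x_0\in U$. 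For the second slot I would use that $\tilde v_\epsilon(x_0)=(1+\epsilon)(\tilde v(x_0)+\epsilon)>(1+\epsilon)\tilde v(x_0)$, so replacing $(1+\epsilon)\tilde v(x_0)$ by $\tilde v_\epsilon(x_0)$ in the second inequality above produces the strict slack
\[
-\Delta_\infty\phi(x_0)-\frac{q}{1-q}\frac{|\nabla\phi(x_0)|^4}{\tilde v_\epsilon(x_0)}\geq\frac{q}{1-q}\,|\nabla\phi(x_0)|^4\cdot\frac{\epsilon}{(1+\epsilon)\,\tilde v(x_0)(\tilde v(x_0)+\epsilon)}.
\]
Feeding in $|\nabla\phi(x_0)|\ge(1+\epsilon)(1-q)m$ (from the first inequality) and $\tilde v(x_0)(\tilde v(x_0)+\epsilon)\le\|\tilde v\|_\infty(\|\tilde v\|_\infty+\epsilon)$ yields a positive lower bound depending only on $\epsilon,q,\|\tilde v\|_\infty$ and $f$. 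Taking $C$ to be the minimum of the two slacks gives \eqref{lemma.supersol.strict.estatement}.

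The main obstacle is ensuring uniformity in the test function, which is why the gradient inequality must be exploited twice: once to produce the strict slack in the first slot, and once to provide a uniform lower bound on $|\nabla\phi(x_0)|$ needed for the infinity-Laplacian slot. Hypothesis \eqref{condicion.crecimiento.f.inflap} plays a double role, both producing the slack in the first slot and converting the $x_0$-dependent quantity $f(\tilde v(x_0)^{1/(1-q)})/\tilde v(x_0)^{q/(1-q)}$ into the uniform constant $m$; without this structural assumption the slack could degenerate as $\tilde v(x_0)$ approaches $\|\tilde v\|_\infty$, and no uniform $C>0$ would be available.
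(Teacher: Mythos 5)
Your proposal is correct and follows essentially the same route as the paper: the affine change of test function $\psi=\phi/(1+\epsilon)-\epsilon$, the monotonicity hypothesis \eqref{condicion.crecimiento.f.inflap} used both to produce the slack in the first slot and to bound the right-hand side below by the uniform constant $m=f(\|\tilde v\|_\infty^{1/(1-q)})/\|\tilde v\|_\infty^{q/(1-q)}$, and the gradient lower bound fed into the infinity-Laplacian slot, yielding the same constants as in \eqref{eq:22} and \eqref{eq:24}. No gaps.
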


\begin{proof}
Let $\tilde{\phi}_{\epsilon}\in C^2$ touch $\tilde{v}_{\epsilon}(x)$ from below at  $x_0\in U$. Define
\[
\tilde{\phi}(x)=\frac{1}{1+\epsilon}\,\tilde{\phi}_{\epsilon}(x)-\epsilon,
\]
which clearly touches  $\tilde{v}(x)$ from below at
$x_0$. Then,
\begin{equation}\label{cap3:eq:19bis}
\nabla{\tilde{\phi}}(x_0)=(1+\epsilon)^{-1}\,\nabla{}\tilde{\phi}_{\epsilon}(x_0)\quad\text{and}\quad
D^2\tilde{\phi}(x_{0})=(1+\epsilon)^{-1}\,D^2\tilde{\phi}_{\epsilon}(x_{0}).
\end{equation}
Since $\tilde{v}(x)$ is a viscosity supersolution of \eqref{eq:15} in
$U$, we deduce 
\begin{equation}\label{eq:20}
|\nabla{}\tilde{\phi}(x_{0})|
\geq
 (1-q)\,\frac{f\left({\tilde{v}(x_0)^{\frac{1}{1-q}}}\right)}{\tilde{v}(x_0)^{\frac{q}{1-q}}},
\end{equation}
 and
\begin{equation}\label{eq:21}
-\big\langle{}D^2\tilde{\phi}(x_{0})\nabla{}\tilde{\phi}(x_{0}),\nabla{}\tilde{\phi}(x_{0})\big\rangle-\frac{q}{1-q}\,\frac{|\nabla{}\tilde{\phi}(x_{0})|^4}{\tilde{v}(x_{0})}\geq0.
\end{equation}
In the sequel we assume $\epsilon$ small enough so that $\|\tilde{v}\|_\infty<\|\tilde{v}_\epsilon\|_\infty=(1+\epsilon)\big(\|\tilde{v}\|_\infty+\epsilon\big)<c^{1-q}.$
Then, from \eqref{condicion.crecimiento.f.inflap},  \eqref{cap3:eq:17}, \eqref{cap3:eq:19bis} and \eqref{eq:20}, we obtain
\begin{equation}\label{eq:22}
\begin{split}
|\nabla\tilde{\phi}_{\epsilon}(x_{0})| & - (1-q)\,\frac{f\left({\tilde{v}_\epsilon(x_0)^{\frac{1}{1-q}}}\right)}{\tilde{v}_\epsilon(x_0)^{\frac{q}{1-q}}}
\\
 &\geq{}
\epsilon\,
 (1-q)\,\frac{f\left({\tilde{v}(x_0)^{\frac{1}{1-q}}}\right)}{\tilde{v}(x_0)^{\frac{q}{1-q}}}
+
 (1-q)\,
 \left(
 \frac{f\left({\tilde{v}(x_0)^{\frac{1}{1-q}}}\right)}{\tilde{v}(x_0)^{\frac{q}{1-q}}}
 -
 \frac{f\left({\tilde{v}_\epsilon(x_0)^{\frac{1}{1-q}}}\right)}{\tilde{v}_\epsilon(x_0)^{\frac{q}{1-q}}}
 \right)
 \\
 &\geq{}
\epsilon\,
 (1-q)\,\frac{f\left(\|{\tilde{v}\|_\infty^{\frac{1}{1-q}}}\right)}{\|\tilde{v}\|_\infty^{\frac{q}{1-q}}}.
 \end{split}
\end{equation}
Similarly, from \eqref{condicion.crecimiento.f.inflap},  \eqref{cap3:eq:17}, \eqref{cap3:eq:19bis}, \eqref{eq:20}, and \eqref{eq:21} we arrive at
\begin{equation}\label{eq:24}
\begin{split}
-&\big\langle{}D^2\tilde{\phi}_{\epsilon}(x_{0})\nabla{}\tilde{\phi}_{\epsilon}(x_{0}),\nabla{}\tilde{\phi}_{\epsilon}(x_{0})\big\rangle-\frac{q}{1-q}\,\frac{|\nabla{}\tilde{\phi}_{\epsilon}(x_{0})|^4}{\tilde{v}_\epsilon(x_{0})}\\
&\geq{}
(1+\epsilon)^3\frac{q}{1-q}\left(\frac{1}{\tilde{v}(x_{0})}-\frac{1}{\tilde{v}(x_{0})+\epsilon}\right)
|\nabla{}\tilde{\phi}(x_{0})|^4\\
&\geq{}
\frac{\epsilon(1+\epsilon)^3q(1-q)^3
}{\|\tilde v\|_\infty\left(\|\tilde v\|_\infty+\epsilon\right)}
\left(
\frac{f\left({\tilde{v}(x_0)^{\frac{1}{1-q}}}\right)}{\tilde{v}(x_0)^{\frac{q}{1-q}}}
 \right)^4
\geq{}
\frac{\epsilon(1+\epsilon)^3q(1-q)^3
}{\|\tilde v\|_\infty\left(\|\tilde v\|_\infty+\epsilon\right)}
\left(
\frac{f\left({\|\tilde{v}\|_\infty^\frac{1}{1-q}}\right)}{\|\tilde{v}\|_\infty^\frac{q}{1-q}}
\right)^4.
\end{split}
\end{equation}
Finally, we get \eqref{lemma.supersol.strict.estatement}  from  \eqref{eq:22} and \eqref{eq:24} 
as desired, which concludes the proof.
\end{proof}

\begin{proof}[Proof of Theorem \ref{theorem.ppio.comparacion}]

Since $u-v\in C(\overline{\Omega})$ and $\overline{\Omega}$ is
compact, $u-v$ attains its maximum in  $\overline{\Omega}$. Suppose, for the sake of contradiction, that
$\max_{\overline{\Omega}}(u-v)>0$. Let
\[
\tilde{u}(x)=u(x)^{1-q},\qquad\tilde{v}(x)=v(x)^{1-q},
\]
and define $\tilde{v}_{\epsilon}(x)$
as in \eqref{cap3:eq:17}.
Notice that  $u-v\leq0$ on $\partial\Omega$ gives
\[
\tilde{u}-\tilde{v}_{\epsilon}=\tilde{u}-(1+{\epsilon})\,\tilde{v}-(1+{\epsilon})\;{\epsilon}<0
\quad\text{on}\ \partial\Omega.
\]
Moreover, by uniform convergence, we have
$\max_{\overline{\Omega}}(\tilde{u}-\tilde{v}_{\epsilon})>0$ for
${\epsilon}$ small enough. Therefore,  we can fix   $\epsilon>0$ small as in Lemma \ref{lema.strict.supersolution.inflap} for the rest of the proof and assume there exists  $U$ compactly contained in $\Omega$ that contains all maximum points of
$\tilde{u}-\tilde{v}_{\epsilon}$.
We have proved in Lemmas \ref{lemma.cambio.variables.inflap} and \ref{lema.strict.supersolution.inflap} that
$\tilde{u}$ and $\tilde{v}_{\epsilon}$ are, respectively, a viscosity subsolution and strict supersolution of \eqref{eq:15} in $U$.

For every $\tau>0$, let $(x_\tau,y_\tau)$ be a maximum point of 
 $\tilde{u}(x)-\tilde{v}_{{\epsilon}}(y)-\frac{\tau}{2}|x-y|^2$
in $\overline{\Omega}\times\overline{\Omega}$. By the compactness of
$\overline{\Omega}$, we can assume that $x_\tau\rightarrow\hat{x}$ as
 $\tau\rightarrow\infty$ for some $\hat{x}\in\overline{\Omega}$
(notice that also $y_\tau\rightarrow\hat{x}$).  Then, \cite[Proposition 3.7]{CIL} implies that $\hat{x}$ is a maximum point of  $\tilde{u}-\tilde{v}_{{\epsilon}}$ and, therefore, an interior point of   $U$. We also have that
\[
\lim_{\tau\rightarrow\infty}\left(\tilde{u}(x_\tau)-\tilde{v}_{{\epsilon}}(y_\tau)-\frac{\tau}{2}|x_\tau-y_\tau|^2\right)=\tilde{u}(\hat{x})-\tilde{v}_{{\epsilon}}(\hat{x})>0,
\]
and, consequently, both $x_\tau$ and
$y_\tau$ are interior points of $U$  for $\tau$ large enough and
\begin{equation}\label{eq:27}
\tilde{u}(x_\tau)-\tilde{v}_{{\epsilon}}(y_\tau)-\frac{\tau}{2}|x_\tau-y_\tau|^2>0.
\end{equation} 
The definition of viscosity solution and the maximum principle for semicontinuous functions, see \cite{CIL}, imply that there exist symmetric matrices 
 $X_\tau$, $Y_\tau$ with $X_\tau\leq{}Y_\tau$  such that
%
%
%
\[
\min
\Bigg\{
\tau\,|x_\tau-y_\tau| - (1-q)\,\frac{f\left({\tilde{u}(x_\tau)^{\frac{1}{1-q}}}\right)}{\tilde{u}(x_\tau)^{\frac{q}{1-q}}}
,
-\tau^2\langle{}X_\tau(x_\tau-y_\tau),(x_\tau-y_\tau)\rangle-\frac{q}{1-q}\,\frac{\tau^4|x_\tau-y_\tau|^4}{\tilde{u}(x_\tau)}
\Bigg\}
\leq
0,
\]
and
\[
\begin{split}
\min
\Bigg\{
\tau\,|x_\tau-y_\tau| & -  (1-q)\,\frac{f\left({\tilde{v}_\epsilon(y_\tau)^{\frac{1}{1-q}}}\right)}{\tilde{v}_\epsilon(y_\tau)^{\frac{q}{1-q}}},
\\
&-\tau^2\langle{}Y_\tau(x_\tau-y_\tau),(x_
\tau-y_\tau)\rangle-\frac{q}{1-q}\,\frac{\tau^4|x_\tau-y_\tau|^4}{\tilde{v}_{\epsilon}(y_\tau)}\Bigg\}
\geq C(\epsilon,q,\|\tilde v\|_\infty)>0.
\end{split}
\]
Subtracting both equations, we get
\begin{eqnarray}
\hspace{-20pt}
0
<
C(\epsilon,q,\|\tilde{v}\|_\infty)
\!\!\!\!&\leq&\!\!\!\!
\min\Bigg\{
\tau\,|x_\tau-y_\tau|  -  (1-q)\,\frac{f\left({\tilde{v}_\epsilon(y_\tau)^{\frac{1}{1-q}}}\right)}{\tilde{v}_\epsilon(y_\tau)^{\frac{q}{1-q}}},
\notag
\\
&&\hspace{25pt}-\tau^2\langle{}Y_\tau(x_\tau-y_\tau),(x_\tau-y_\tau)\rangle-\frac{q}{1-q}\,\frac{\tau^4|x_\tau-y_\tau|^4}{\tilde{v}_{\epsilon}(y_\tau)}\Bigg\}
\label{eq:31}
\\
&&\!\!\!\!-
\min
\Bigg\{
\tau\,|x_\tau-y_\tau| - (1-q)\,\frac{f\left({\tilde{u}(x_\tau)^{\frac{1}{1-q}}}\right)}{\tilde{u}(x_\tau)^{\frac{q}{1-q}}},
\notag
\\
&&\hspace{25pt}-\tau^2\langle{}X_\tau(x_\tau-y_\tau),(x_\tau-y_\tau)\rangle-\frac{q}{1-q}\,\frac{\tau^4|x_\tau-y_\tau|^4}{\tilde{u}(x_\tau)}
\Bigg\}.\label{eq:32}
\end{eqnarray}
We  consider four cases, depending on the values where the minima in \eqref{eq:31} and \eqref{eq:32} are attained.  In all cases we obtain a contradiction using that
$X_\tau\leq{}Y_\tau$ and $\tilde{v}_{\epsilon}(y_\tau)\leq{}\tilde{u}(x_\tau)$, which follows from ~\eqref{eq:27}.
\begin{enumerate}\itemsep3pt

 \item{}Both minima  are attained by the first terms and \eqref{condicion.crecimiento.f.inflap} implies a contradiction, i.e.,
 \[
 0
<
C(\epsilon,q,\|\tilde{v}\|_\infty)
\leq
 (1-q)\left(
 \frac{f\left({\tilde{u}(x_\tau)^{\frac{1}{1-q}}}\right)}{\tilde{u}(x_\tau)^{\frac{q}{1-q}}}
   -  \frac{f\left({\tilde{v}_\epsilon(y_\tau)^{\frac{1}{1-q}}}\right)}{\tilde{v}_\epsilon(y_\tau)^{\frac{q}{1-q}}}
   \right)
   \leq0.
\]

\item{}Both minima are attained by the second terms. Then,
\[
\begin{split}
0
&<
C(\epsilon,q,\|\tilde{v}\|_\infty)
\\
&\leq
-\tau^2\big\langle{}(Y_\tau-X_\tau)(x_\tau-y_\tau),(x_\tau-y_\tau)\big\rangle
+\frac{q}{1-q}\;\tau^4|x_\tau-y_\tau|^4
\left(
\frac{1}{\tilde{u}(x_\tau)}
-
\frac{1}{\tilde{v}_{\epsilon}(y_\tau)}
\right)\leq0,
\end{split}
\]
a contradiction.

 \item{}The minima in \eqref{eq:31} and \eqref{eq:32} are attained by the second  and first term, respectively. This case can be reduced to case (1) above and we again obtain a contradiction. Namely,
 \[
\begin{split}
0
<
C(\epsilon,q,\|\tilde{v}\|_\infty)
\leq
&-\tau^2\langle{}Y_\tau(x_\tau-y_\tau),(x_\tau-y_\tau)\rangle-\frac{q}{1-q}\,\frac{\tau^4|x_\tau-y_\tau|^4}{\tilde{v}_{\epsilon}(y_\tau)}
\\
&-\tau\,|x_\tau-y_\tau| + (1-q)\,\frac{f\left({\tilde{u}(x_\tau)^{\frac{1}{1-q}}}\right)}{\tilde{u}(x_\tau)^{\frac{q}{1-q}}}
\\
\leq
&\,
 (1-q)\left(
 \frac{f\left({\tilde{u}(x_\tau)^{\frac{1}{1-q}}}\right)}{\tilde{u}(x_\tau)^{\frac{q}{1-q}}}
   -  \frac{f\left({\tilde{v}_\epsilon(y_\tau)^{\frac{1}{1-q}}}\right)}{\tilde{v}_\epsilon(y_\tau)^{\frac{q}{1-q}}}
   \right)
   \leq0.
\end{split}
\]

 \item{}Finally, if the minima in \eqref{eq:31} and \eqref{eq:32} are respectively attained by the first  and second term, we obtain a contradiction as in case (2) above, i.e.,
 \[
\begin{split}
0
<
&\;
C(\epsilon,q,\|\tilde{v}\|_\infty)
\leq
\tau\,|x_\tau-y_\tau|  -  (1-q)\,\frac{f\left({\tilde{v}_\epsilon(y_\tau)^{\frac{1}{1-q}}}\right)}{\tilde{v}_\epsilon(y_\tau)^{\frac{q}{1-q}}}
\\
&\hspace{+74pt}+\tau^2\langle{}X_\tau(x_\tau-y_\tau),(x_\tau-y_\tau)\rangle
+\frac{q}{1-q}\,\frac{\tau^4|x_\tau-y_\tau|^4}{\tilde{u}(x_\tau)}\\
\leq
&
-\tau^2\big\langle{}(Y_\tau-X_\tau)(x_\tau-y_\tau),(x_\tau-y_\tau)\big\rangle
+\frac{q}{1-q}\;\tau^4|x_\tau-y_\tau|^4
\left(
\frac{1}{\tilde{u}(x_\tau)}
-
\frac{1}{\tilde{v}_{\epsilon}(y_\tau)}
\right)\leq0.
\end{split}
\]
\end{enumerate}

\medskip
\noindent{}Since all the alternatives lead to a contradiction, the proof is complete.
\end{proof}

%
%

\medskip


\section{Non-existence of  solutions with large $\Lambda$ for the limit problem}\label{sect.non.existence}

We show here that due to the structure of  the limit problem \eqref{eq:limite.1}, there exists a threshold
 $\Lambda_{\max}$ beyond which the problem has no solutions.

\begin{prop}\label{prop:NO.exist.beyond.Lambda.hat}
Let $\Omega\subset\R^n$ be a bounded domain. Problem \eqref{eq:limite.1} has no solutions for $\Lambda>\Lambda_{\max}$, where
\begin{equation}\label{defn.lambda.hat}
\Lambda_{\max}=e^{-1}\Lambda_{1}(\Omega),
\end{equation}
and $\displaystyle{\Lambda_1(\Omega)\,=\|\textnormal{dist}(\cdot,\partial\Omega)\|_\infty}^{-1}$ is the first $\infty$-eigenvalue, see \cite{Juutinen-Lindqvist-Manfredi}.
\end{prop}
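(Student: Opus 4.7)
The plan is to reduce the nonexistence statement to a comparison with the first $\infty$-Laplacian eigenvalue problem. The key observation is the elementary convex inequality $e^{t} \geq e\,t$ for all $t\in\mathbb{R}$ (with equality only at $t=1$). Given any viscosity solution $u_\Lambda$ of \eqref{eq:limite.1}, Proposition \ref{noexist.small} guarantees $u_\Lambda > 0$ in $\Omega$, so the pointwise inequality $\Lambda\,e^{u_\Lambda} \geq (\Lambda e)\,u_\Lambda$ combined with the supersolution part of \eqref{eq:limite.1} shows that $u_\Lambda$ is a positive viscosity supersolution of
\[
\min\bigl\{|\nabla w| - (\Lambda e)\,w,\ -\Delta_\infty w\bigr\} = 0 \ \text{in}\ \Omega,\qquad w=0\ \text{on}\ \partial\Omega,
\]
i.e.\ of the first $\infty$-eigenvalue problem with eigenvalue $\mu = \Lambda e$.

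Having set this up, I would next invoke the fact---the $\infty$-Laplacian analog of Barta's inequality---that a positive viscosity supersolution of the first $\infty$-eigenvalue problem with eigenvalue $\mu$ can exist only if $\mu \leq \Lambda_1(\Omega) = \|\operatorname{dist}(\cdot,\partial\Omega)\|_\infty^{-1}$. To prove it I would use cone comparison for $\infty$-superharmonic functions: at a maximum point $x^*$ of $u_\Lambda$, writing $M=u_\Lambda(x^*)$, the boundary condition $u_\Lambda=0$ on $\partial\Omega$ and the $\infty$-superharmonicity force $u_\Lambda(x) \geq M - (M/d(x^*))|x-x^*|$ throughout $\Omega$, which in turn bounds the local Lipschitz constant of $u_\Lambda$ at $x^*$ by $M/d(x^*)$. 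On the other hand, the supersolution inequality $|\nabla u_\Lambda| \geq \mu u_\Lambda$ gives essential-sup gradient at least $\mu M$ in any neighborhood of $x^*$. Comparing the two yields $\mu\,d(x^*)\leq 1$, and hence $\mu \leq 1/R = \Lambda_1(\Omega)$ provided $x^*$ is an inradius point.

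Combining these steps gives $\Lambda e \leq \Lambda_1(\Omega)$, equivalently $\Lambda \leq e^{-1}\Lambda_1(\Omega) = \Lambda_{\max}$, contradicting the hypothesis $\Lambda > \Lambda_{\max}$. The main obstacle is the very last assertion: establishing that the maximum of $u_\Lambda$ is indeed attained at a point where $\operatorname{dist}(\cdot,\partial\Omega)=R$. For symmetric domains such as balls or in the distinguished cases of Section \ref{section.explicit} (where explicit solutions of the form $\alpha\,\operatorname{dist}(\cdot,\partial\Omega)$ appear), this is transparent, but in general one has to combine Proposition \ref{noexist.small} (which pushes $u_\Lambda$ to be maximal on the ridge $\{d=R\}$) with the Lipschitz estimate $u_\Lambda\leq \|\nabla u_\Lambda\|_\infty\,\operatorname{dist}(\cdot,\partial\Omega)$ and the supersolution lower bound $\|\nabla u_\Lambda\|_\infty\geq\Lambda e^{M}$ to squeeze out the equality $d(x^*)=R$. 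An alternative route that bypasses this fine analysis is to pass to the limit $p\to\infty$ in the classical $p$-Laplacian Barta inequality, using Lemma \ref{lema.autovalores}.
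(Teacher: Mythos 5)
Your opening step is exactly the paper's opening step: positivity of any solution (Proposition \ref{noexist.small}) together with the convexity inequality $\Lambda e^{t}\ge \Lambda e\,t$ shows that $u_\Lambda$ is a positive viscosity supersolution of the $\infty$-eigenvalue problem with parameter $\mu=\Lambda e>\Lambda_1(\Omega)$ (the paper phrases this with $\mu=\Lambda_1(\Omega)+\epsilon$ and the function $\Phi_\Lambda(t)=\Lambda e^{t}-\mu t\ge0$). After that, however, the paper does not invoke any Barta-type inequality: it takes a first $\infty$-eigenfunction $v$ normalized with $\|v\|_\infty<e^{-1}$ as a subsolution, orders $v\le u_\Lambda$ via Lemma \ref{comparison.f(x)} with constant right-hand side $\Lambda$, and then runs a monotone Perron-type iteration between $v$ and $u_\Lambda$ to manufacture a genuine positive eigenfunction for $\mu=\Lambda_1(\Omega)+\epsilon$, contradicting the isolation of $\Lambda_1$ quoted from \cite{Juutinen-Lindqvist,Juutinen-Lindqvist-Manfredi}. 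Your route rests instead on the statement ``a positive supersolution with parameter $\mu$ vanishing on $\partial\Omega$ forces $\mu\le\Lambda_1(\Omega)$'', which is not available in the paper or its cited results in this form, so it must be proved, and the proof you sketch has a genuine gap.

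There are two holes, the second fatal as written (and you flag it yourself). First, the assertion that the supersolution inequality $|\nabla u_\Lambda|\ge\mu u_\Lambda$ yields an essential-sup gradient bound $\ge\mu M$ near $x^*$ is not justified: the equation is a second-order min-equation, the viscosity information is only available where a $C^2$ function touches $u_\Lambda$ from below, and at a cone-like maximum there are no such test functions; this can be repaired by a comparison argument on small balls, but you do not give it. Second, and decisively, your conclusion $\mu\,d(x^*)\le1$ (here $d(\cdot)=\textnormal{dist}(\cdot,\partial\Omega)$, $R=\|d\|_\infty$) only gives $\mu\le\Lambda_1(\Omega)$ if the maximum point satisfies $d(x^*)=R$, which is not established and is not available in a general bounded domain: the paper identifies the maximum set of an $\infty$-superharmonic function with $\mathcal{M}$ only under the additional hypothesis that $\mathcal{M}$ is Lipschitz connected (Lemma \ref{lemma.yu}, used in Section \ref{section.explicit}), and Proposition \ref{noexist.small} does not supply it. Neither fallback closes the gap: the ``squeeze'' combines $u_\Lambda\le\|\nabla u_\Lambda\|_\infty\,d(\cdot)$ with a lower bound on $\|\nabla u_\Lambda\|_\infty$, so the inequalities point the wrong way; and the $p$-Laplacian Barta inequality cannot be ``passed to the limit'' because $u_\Lambda$ solves no finite-$p$ equation and nonexistence for the approximating problems does not imply nonexistence for the limit problem (which is precisely why the paper argues directly at the limit level). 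For what it is worth, your scheme can be saved by localizing at an incenter point $y$ (with $d(y)=R$) instead of at the maximum of $u_\Lambda$: on $B_r(y)$, compare $u_\Lambda$ from below, via Lemma \ref{comparison.f(x)} with constant right-hand side $\mu\min_{\overline B_r(y)}u_\Lambda$, with the cone $\min_{\partial B_r(y)}u_\Lambda+\mu\bigl(\min_{\overline B_r(y)}u_\Lambda\bigr)(r-|x-y|)$, and use comparison with cones for $\infty$-superharmonic functions to get $\min_{\overline B_r(y)}u_\Lambda\ge u_\Lambda(y)(1-r/R)$; combining the two bounds and letting $r\to0$ gives $\mu\le1/R=\Lambda_1(\Omega)$ with no reference to the maximum point.
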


\begin{proof}
Define $\mu=\Lambda_1(\Omega)+\epsilon$  with $\epsilon>0$. Suppose for contradiction that problem \eqref{eq:limite.1} has a solution $u_\Lambda$ for some
$\Lambda>e^{-1}\mu$. 

First we are going to use this $u_\Lambda$ to construct a supersolution to the eigenvalue  problem with parameter $\mu$. More precisely, we are going to show that
\begin{equation}\label{eq:CCNO:supersol}
\min\big\{|\nabla{}u_{\Lambda}|-\mu\, u_{\Lambda},
-\Delta_{\infty}u_{\Lambda}\big\}\geq0\quad\text{in}\ \Omega
\end{equation}
in the viscosity sense. To this aim, let $x_0\in\Omega$ and
$\phi\in{C}^2$ such that $u_\Lambda-\phi$ has a minimum in 
$x_0$. Since $ u_\Lambda(x)$ is a solution of problem \eqref{eq:limite.1} we have
\[
\min\left\{|\nabla{}\phi(x_0)|-\Lambda\,e^{ u_{\Lambda}(x_0)},
-\Delta_{\infty}\phi(x_0)\right\}\geq0\quad\text{in}\ \Omega.
\]
We deduce that $-\Delta_{\infty}\phi(x_0)\geq0$ and
$|\nabla{}\phi(x_0)|\geq\Lambda\,e^{ u_{\Lambda}(x_0)}$. Hence,
\[
|\nabla{}\phi(x_0)|-\mu\,u_\Lambda(x_0)\geq\Lambda\,e^{ u_{\Lambda}(x_0)}-\mu\,u_\Lambda(x_0).
\]
To deduce \eqref{eq:CCNO:supersol} it is enough to show that
\[
\min_{t\in\mathbb{R}}\Phi_\Lambda(t)\geq0\quad\text{where}\quad\Phi_\Lambda(t)=\Lambda\,e^{t}-\mu\,t.
\]
It is elementary to check that the function  $\Phi_{\Lambda}$ is convex and has a unique minimum point at $t_\text{min}=\log(\mu\,\Lambda^{-1})$. Notice that $\lim_{t\to\pm\infty}\Phi_{\Lambda}(t)=+\infty$, and hence $t_\text{min}$ is a global minimum. Then, it is easy to check that $\Lambda>e^{-1}\mu$ implies
$\Phi_{\Lambda}(t_\text{min})\geq0$.

\medskip

Next, we notice that any first $\infty$-eigenfunction is a subsolution of the eigenvalue problem with parameter $\mu$. So, let $v$ be a first $\infty-$eigenfunction, that is, a solution of 
\[
\left\{
\begin{aligned}
&\min\big\{|\nabla{} v|-\Lambda_1(\Omega)\,   v,
-\Delta_{\infty}  v\big\}=0&&\text{in}\ \Omega,\\
&  v>0 && \text{in}\ \Omega \\
&  v=0 && \text{on}\ \partial\Omega
\end{aligned}
\right.
\]
normalized in such a way that $\| v\|_\infty<e^{-1}$. Clearly, by definition of $\mu$, 
\[
\min\big\{|\nabla{} v|-\mu\,   v,
-\Delta_{\infty}  v\big\}\leq0\quad\text{in}\ \Omega.
\]

\medskip

Now, we have to show that $u_\Lambda$ and $v$ are ordered, namely, that  $0<v\leq u_\Lambda$ in $\Omega$. Indeed, using that $\| v\|_\infty<e^{-1}$ and $\Lambda_1(\Omega)<\mu<\Lambda e$, it is easy to see that
\[
\min\big\{|\nabla{} v |-\Lambda,
-\Delta_{\infty} v\big\}\leq0\quad\text{in}\ \Omega,
\]
and using that $e^{u_\Lambda(x)}\geq1$ in $\Omega$ one gets
\[
\min\big\{|\nabla{} u_\Lambda |-\Lambda,
-\Delta_{\infty} u_\Lambda\big\}\geq0\quad\text{in}\ \Omega.
\]
As $v=u_\Lambda=0$ on $\partial\Omega$,  we get $0<v\leq u_\Lambda$ by comparison, see 
Lemma \ref{comparison.f(x)}.

\medskip

So far, we have a subsolution $v$ and a supersolution  $u_\Lambda$ of the eigenvalue problem
\begin{equation}\label{cafeslaestrella}
\min\big\{|\nabla{} w|-\mu\,  w,
-\Delta_{\infty} w\big\}=0\quad\text{in}\ \Omega
\end{equation}
which verify $0<v \leq u_\Lambda$. Next we claim that it is possible to  construct  a solution of  \eqref{cafeslaestrella} iterating between $v$ and $u_\Lambda$.
The argument finishes noticing that we have constructed a positive $\infty-$eigenfunction associated to $\mu=\Lambda_1+\epsilon$, which is a contradiction with the fact that $\Lambda_1$ is isolated (see \cite[Theorem 8.1]{Juutinen-Lindqvist} and \cite[Theorem 3.1]{Juutinen-Lindqvist-Manfredi}). Since the argument above works for every  $\epsilon>0$, we conclude that there is no solution of  \eqref{eq:limite.1}  for $\Lambda>\Lambda_{\max}$.

\medskip

We conclude by proving the claim. First, define  $w_1(x)$, viscosity solution of
\[
\left\{
\begin{aligned}
&\min\big\{|\nabla{} w_1|-\mu\,  v,-\Delta_{\infty} w_1\big\}=0&&\text{in}\ \Omega\\
&w_1=0&&\text{on}\ \partial\Omega.
\end{aligned}
\right.
\]
To prove that such a $w_1$ exists, notice that $ v $ is a subsolution of the problem and that $ u_\Lambda$ is a supersolution, since, from \eqref{eq:CCNO:supersol} and $v\leq u_\Lambda$ we deduce
\[
\min\big\{|\nabla{} u_\Lambda|-\mu\,  v,-\Delta_{\infty} u_\Lambda\big\}\geq0.
\]
Then, we can apply the comparison principle as above and apply the Perron method  (\cite[Theorem 4.1]{CIL}), to get a unique  $w_1$ such that
\[
 v \leq w_1\leq u_\Lambda\quad \text{in}\ \Omega.
\]
Then, we define  $w_2$, the solution of
\[
\left\{
\begin{aligned}
&\min\big\{|\nabla{} w_2|-\mu\,  w_1,-\Delta_{\infty} w_2\big\}=0&&\text{in}\ \Omega\\
&w_2=0&&\text{on}\ \partial\Omega.
\end{aligned}
\right.
\]
In this case, $w_1$ is a subsolution and $ u_\Lambda$ is a supersolution, since
\[
\min\big\{|\nabla{} w_1|-\mu\,  v,-\Delta_{\infty} w_1\big\}=0\quad\Rightarrow\quad \min\big\{|\nabla{} w_1|-\mu\,  w_1,-\Delta_{\infty} w_1\big\}\leq0,
\]
while
\[
\min\big\{|\nabla{} u_\Lambda|-\mu\,  u_\Lambda,-\Delta_{\infty} u_\Lambda\big\}\geq0\quad\Rightarrow\quad \min\big\{|\nabla{} u_\Lambda|-\mu\,  w_1,-\Delta_{\infty} u_\Lambda\big\}\geq0.
\]
As $w_1= u_\Lambda=0$ on $\partial\Omega$, by comparison and the Perron method, we obtain that there exists a unique $w_2$ satisfying
\[
 v \leq w_1\leq w_2\leq u_\Lambda\quad\text{in}\ \Omega.
\]

Iterating this procedure, we construct a non-decreasing sequence 
\[
 v \leq w_1\leq w_2\leq\ldots \leq w_{k-1}\leq w_k\leq u_\Lambda
\]
of solutions of 
\begin{equation}\label{cap3:eq:exSI:eq:22}
\left\{
\begin{split}
&\min\big\{|\nabla{} w_k|-\mu\,  w_{k-1},-\Delta_{\infty} w_k\big\}=0&&\text{in}\ \Omega\\
&w_k=0&&\text{on}\ \partial\Omega.
\end{split}
\right.
\end{equation}
Notice that $\|w_k\|_{\infty}$ is uniformly bounded by construction. On the other hand, as  $-\Delta_\infty w_k\geq0$ in $\Omega$, we have  (see \cite{lindqvist-manfredi1,lindqvist-manfredi2} and also \cite{Juutinen} for a related construction) that
\[
|\nabla w_k(x)|\leq\frac{w_k(x)}{\textnormal{dist}(x,\partial\Omega)}\leq\frac{{u}_\Lambda(x)}{\textnormal{dist}(x,\partial\Omega)}\qquad \text{a.e.}\ x\in\Omega,
\]
for all $k>1$.
From there, both $\|w_k\|_{\infty}$ and $\|\nabla w_k\|_{\infty}$ are uniformly bounded in compact subsets of $\Omega$. We observe that $v,u_\Lambda$ are barriers in $\partial\Omega$ for each $w_k$. Hence by the Ascoli-Arzela theorem and the monotonicity of the sequence $\{w_k\}$, the whole sequence converges uniformly in
$\Omega$ to some $w\in{C}(\Omega)$ which verifies $w=0$ on $\partial\Omega$. Then, we can take limits in the viscosity sense in  \eqref{cap3:eq:exSI:eq:22} and obtain that the limit  $w$ is a viscosity solution of  \eqref{cafeslaestrella}, which proves the claim.
\end{proof}

\medskip


\section{Existence of a branch of minimal solutions for the limit problem}\label{existence.minimal.limit.problem}
In this section we  show that  for every $\Lambda\in(0,\Lambda_{\max}]$ there is a minimal solution 
of the  problem 
\begin{equation}\label{eq:limite.existence.minimal}
\left\{
\begin{aligned}
&\min\big\{|\nabla{}u|-\Lambda\,e^{u},
-\Delta_{\infty}u\big\}=0 &&\textrm{in}\ \Om,\\
&u=0 && \textrm{on}\ \partial\Omega.
\end{aligned}
\right.
\end{equation}
The  proof is based on the ideas in \cite{GP2}, although our construction is  different in order to take advantage of Corollary \ref{corolario.unicidad.limit.concavo}, our result of uniqueness for small solutions (the construction in \cite{GP2} would only allow us to conclude that the minimal solution satisfies  $\|u\|_\infty\leq \|u_{\Lambda_{\max}}\|_\infty=1$, and Corollary \ref{corolario.unicidad.limit.concavo} requires a strict inequality).

\begin{theorem}\label{theorem:exist.for.Lambda.max}
Let $\Omega\subset\R^n$ be a bounded domain. Then, problem \eqref{eq:limite.existence.minimal} has a minimal solution $u_\Lambda$ 
 for every $\Lambda\in(0,\Lambda_{\max}]$, where  $\Lambda_{\max}$ is given by \eqref{defn.lambda.hat}. Moreover, 
 \begin{enumerate}\itemsep3pt
 \item We have the estimate
 \[
 \Lambda \,\textnormal{dist}(x,\partial\Omega)
 \leq
 u_\Lambda(x)
 \leq
 e\Lambda \,\textnormal{dist}(x,\partial\Omega).
 \]
In particular, $\|u_\Lambda\|_\infty\leq e\Lambda\Lambda_1(\Omega)^{-1}<1$  for $\Lambda\in(0,\Lambda_{\max})$. 
 
 \item For every $\Lambda\in(0,\Lambda_{\max})$, $u_\Lambda$ is the only solution of  \eqref{eq:limite.existence.minimal} with 
  $\|u\|_\infty<1$.

\item The branch of minimal solutions is a non-decreasing continuum, in the sense that  if
$0<\Lambda<\Upsilon<\Lambda_{\max}$, then 
$u_{\Lambda}\leq u_{\Upsilon}$ and whenever $\Upsilon\to\Lambda\in(0,\Lambda_{\max})$, then $u_\Upsilon\to u_\Lambda$ uniformly.
\end{enumerate}

%
%
%
%

\end{theorem}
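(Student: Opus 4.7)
The plan is a monotone iteration between explicit sub- and super-solutions, combined with Corollary \ref{corolario.unicidad.limit.concavo} to obtain the remaining statements. The main technical point is the inductive verification of the nested ordering in the iteration, which both makes the Perron scheme well-defined at each step and ensures that the limit is minimal.

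\emph{Barriers.} The function $v_\Lambda(x)=\Lambda\,\mathrm{dist}(x,\partial\Omega)$ is a subsolution of \eqref{eq:limite.existence.minimal} by Proposition \ref{noexist.small}. For $U_\Lambda(x)=e\Lambda\,\mathrm{dist}(x,\partial\Omega)$, scaling Proposition \ref{yllegokawohlymandoaparar} gives $|\nabla U_\Lambda|\geq e\Lambda$ and $-\Delta_\infty U_\Lambda\geq 0$ in the viscosity sense. Since $\|U_\Lambda\|_\infty=e\Lambda/\Lambda_1(\Omega)\leq 1$ for $\Lambda\leq\Lambda_{\max}$, one has $\Lambda e^{U_\Lambda}\leq e\Lambda\leq|\nabla U_\Lambda|$, so $U_\Lambda$ is a supersolution of \eqref{eq:limite.existence.minimal}.

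\emph{Iteration and limit.} Set $w_0:=v_\Lambda$ and, following the Perron-type scheme in the proof of Proposition \ref{prop:NO.exist.beyond.Lambda.hat}, define $w_{k+1}$ recursively as the unique viscosity solution (Lemma \ref{comparison.f(x)} applies since $\Lambda e^{w_k}$ is continuous and positive) of
\begin{equation*}
\min\{|\nabla w_{k+1}|-\Lambda e^{w_k},\,-\Delta_\infty w_{k+1}\}=0\ \text{in}\ \Omega,\qquad w_{k+1}=0\ \text{on}\ \partial\Omega.
\end{equation*}
A short induction shows $v_\Lambda=w_0\leq w_1\leq\cdots\leq U_\Lambda$: at each stage $w_k$ is a subsolution and $U_\Lambda$ a supersolution of the $w_{k+1}$-equation (the former because $w_{k-1}\leq w_k$ implies $\Lambda e^{w_{k-1}}\leq\Lambda e^{w_k}$, the latter as in the previous paragraph). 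Local Lipschitz bounds for $\infty$-superharmonic functions \cite{lindqvist-manfredi2}, together with Ascoli-Arzela, upgrade monotone pointwise convergence $w_k\nearrow u_\Lambda$ to locally uniform convergence, and viscosity stability then yields a solution of \eqref{eq:limite.existence.minimal} satisfying $v_\Lambda\leq u_\Lambda\leq U_\Lambda$, which is part (1). Minimality follows since any other solution $\tilde u$ verifies $\tilde u\geq v_\Lambda=w_0$ by Proposition \ref{noexist.small}, and then $\tilde u\geq w_k$ for all $k$ by induction via Lemma \ref{comparison.f(x)}, whence $\tilde u\geq u_\Lambda$.

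\emph{Uniqueness and continuity of the branch.} For $\Lambda<\Lambda_{\max}$, part (1) yields the strict inequality $\|u_\Lambda\|_\infty<1$, so Corollary \ref{corolario.unicidad.limit.concavo} delivers part (2) at once. For the monotonicity in (3), note that whenever $0<\Lambda<\Upsilon\leq\Lambda_{\max}$ the minimal solution $u_\Upsilon$ is itself a supersolution of the $\Lambda$-problem (since $\Lambda e^{u_\Upsilon}\leq\Upsilon e^{u_\Upsilon}$); rerunning the iteration above starting from $v_\Lambda$ but with $u_\Upsilon$ as upper barrier yields a solution $\leq u_\Upsilon$, which by minimality at $\Lambda$ must equal $u_\Lambda$. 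Continuity at $\Lambda\in(0,\Lambda_{\max})$ then follows from monotonicity and the uniform bound $u_\Upsilon\leq U_{\Lambda_{\max}}$: the monotone left and right limits $u_\Lambda^\pm$ as $\Upsilon\to\Lambda^\pm$ exist pointwise, converge locally uniformly by Ascoli-Arzela and the $\infty$-superharmonicity, and solve \eqref{eq:limite.existence.minimal} with $L^\infty$-norm still strictly less than one; part (2) then forces $u_\Lambda^\pm=u_\Lambda$.
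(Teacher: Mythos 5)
Your proof is correct and follows essentially the same route as the paper: monotone iteration between the explicit barriers $\Lambda\,\textnormal{dist}(\cdot,\partial\Omega)$ and $e\Lambda\,\textnormal{dist}(\cdot,\partial\Omega)$ using Lemma \ref{comparison.f(x)} at each step, passage to the limit as in Proposition \ref{prop:NO.exist.beyond.Lambda.hat}, minimality by running the same iterates under an arbitrary solution, and parts (2)--(3) via Corollary \ref{corolario.unicidad.limit.concavo}. The only cosmetic slip is attributing the subsolution property of $\Lambda\,\textnormal{dist}(\cdot,\partial\Omega)$ to Proposition \ref{noexist.small} (which instead gives the lower bound on solutions); the subsolution fact follows directly from $e^{v_\Lambda}\geq 1$ together with Proposition \ref{yllegokawohlymandoaparar}, so the argument is unaffected.
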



\begin{proof}
\noindent1.\quad{}Let $\underline{u}$ and $\overline{u}$ be the unique viscosity solutions of
\begin{equation}\label{eqn:low.bar.u}
\left\{
\begin{aligned}
&\min\Big\{|\nabla \underline{u}|-\Lambda,
-\Delta_{\infty}\underline{u}\Big\}=0&&\text{in}\ \Omega\\
&\underline{u}=0&&\textrm{on}\ \partial\Omega
\end{aligned}
\right.
\end{equation}
and
\begin{equation}\label{eqn:up.bar.u}
\left\{
\begin{aligned}
&\min\Big\{|\nabla \overline{u}|-e\Lambda,
-\Delta_{\infty}\overline{u}\Big\}=0&&\text{in}\ \Omega\\
&\overline{u}=0&&\textrm{on}\ \partial\Omega,
\end{aligned}
\right.
\end{equation}
respectively.  By Proposition \ref{yllegokawohlymandoaparar}, we  have the explicit expressions
\begin{equation}\label{explicit.exprss}
 \underline{u}(x)=\Lambda\,\textnormal{dist}(x,\partial\Omega)\qquad\textrm{and}\qquad
  \overline{u}(x)=e\Lambda\,\textnormal{dist}(x,\partial\Omega)
\end{equation}
and  
$\underline{u}\leq\overline{u}$ follows trivially (alternatively, this  can be proved  by comparison, Lemma \ref{comparison.f(x)}, using that $\overline{u}$ is a viscosity supersolution of \eqref{eqn:low.bar.u}).


\medskip

\noindent2.\quad{}Define now $u_1$, viscosity solution of 
\begin{equation}\label{eqn:u1}
\left\{
\begin{aligned}
&\min\Big\{|\nabla u_1|-\Lambda \,e^{\underline{u}},
-\Delta_{\infty}u_{1}\Big\}=0&&\text{in}\ \Omega\\
&u_1=0&&\textrm{on}\ \partial\Omega.
\end{aligned}
\right.
\end{equation}
Let us show that
\begin{equation}\label{u1.bounds}
\underline{u}\leq u_1\leq\overline{u}\quad\text{in}\ \Omega. 
\end{equation}
First, we prove $u_1\leq\overline{u}$. We aim to show that 
$\min\big\{|\nabla u_1|-e\Lambda,
-\Delta_{\infty}u_1\big\}\leq0$ in the viscosity sense
and then apply comparison for equation \eqref{eqn:up.bar.u}, see 
Lemma \ref{comparison.f(x)}.  Therefore, let $x_0\in\Om$ and
$\phi\in{C}^2(\Om)$ such that $u_1-\phi$ attains a
 local maximum at $x_0$. 
We can assume that $-\Delta_{\infty}\phi(x_0)>0$ because we are done otherwise. Then, from
\eqref{eqn:u1}, \eqref{explicit.exprss}, and \eqref{defn.lambda.hat}, we have
\[
|\nabla \phi(x_0)|
\leq 
\Lambda \,e^{\underline{u}(x_0)}
\leq
\Lambda \,e^{ \Lambda_{\max}\,\Lambda_1(\Omega)^{-1}}
<
e\Lambda.  
\]
In order to show  that $u_1\geq\underline{u}$, we  prove  that 
$\min\big\{|\nabla u_1|-\Lambda,
-\Delta_{\infty}u_1\big\}\geq0$  in the viscosity sense
and then proceed by comparison for equation \eqref{eqn:low.bar.u}. Indeed, since $u_1$ is a supersolution of \eqref{eqn:u1}, we have $-\Delta_{\infty}u_{1}\geq0$ and
$|\nabla u_1|\geq \Lambda \,e^{\underline{u}}\geq \Lambda$
in the viscosity sense, as desired.

\medskip

\noindent3.\quad{}For each $k\geq 0$, we define $u_{k+1}$ as the viscosity solution of 
\begin{equation}\label{eqn:u.k+1}
\left\{
\begin{aligned}
&\min\Big\{|\nabla u_{k+1}|-\Lambda \,e^{u_{k}},
-\Delta_{\infty}u_{k+1}\Big\}=0&&\text{in}\ \Omega\\
&u_{k+1}=0&&\textrm{on}\ \partial\Omega
\end{aligned}
\right.
\end{equation}
with $u_0=\underline{u}$ and $u_1$ given by \eqref{eqn:u1}. Let us show that for all $k\geq0$
\begin{equation}\label{uk.increas.bdd}
\underline{u}\leq u_{k}\leq u_{k+1}\leq\overline{u}\qquad\text{in}\ \Omega,
\end{equation}
that is, the sequence $\{u_k\}_{k\geq0}$ is non-decreasing and uniformly bounded.

We prove \eqref{uk.increas.bdd} by induction. First, notice that \eqref{u1.bounds} proves the case when $k=0$. Assume \eqref{uk.increas.bdd} holds true for $k-1$ and let us  prove that $u_{k}\leq u_{k+1}$. Since  $u_{k+1}$ is, by definition, a viscosity supersolution of \eqref{eqn:u.k+1}, we have $-\Delta_{\infty}u_{k+1}\geq0$ and
$|\nabla u_{k+1}|\geq \Lambda \,e^{u_{k}}\geq \Lambda\,e^{u_{k-1}}$
in the viscosity sense
by the induction hypothesis.
Therefore, $u_{k+1}$ is a viscosity solution of 
\[
\min\big\{|\nabla u_{k+1}|-\Lambda\, e^{u_{k-1}},
-\Delta_{\infty}u_{k+1}\big\}\geq0\qquad \textrm{in}\ \Omega.
\]
By definition, we have $\min\big\{|\nabla u_{k}|-\Lambda\, e^{u_{k-1}},
-\Delta_{\infty}u_{k}\big\}=0
$
and $u_{k}\leq u_{k+1}$ follows by comparison, see 
Lemma \ref{comparison.f(x)} (notice that $e^{u_{k-1}}$ is bounded, positive, and continuous, since the  $\infty$-superharmonicity of $u_{k-1}$
imply its Lipschitz continuity, see \cite{lindqvist-manfredi2}).

To  prove  that $u_{k+1}\leq\overline{u}$,  we show that 
\[
\min\big\{|\nabla u_{k+1}|-e\Lambda,
-\Delta_{\infty}u_{k+1}\big\}\leq0\qquad\textrm{in} \ \Omega
\]
and use comparison for equation \eqref{eqn:up.bar.u} (see 
Lemma \ref{comparison.f(x)}).  Therefore, let $x_0\in\Om$ and
$\phi\in{C}^2(\Om)$ such that $u_{k+1}-\phi$ attains a
 local maximum at $x_0$.
Assume that $-\Delta_{\infty}\phi(x_0)>0$ since we are done otherwise. Then, from \eqref{eqn:u.k+1}, \eqref{explicit.exprss},  \eqref{defn.lambda.hat},  and the induction hypothesis we get
\[
|\nabla \phi(x_0)|
\leq 
\Lambda \,e^{u_{k}(x_0)}
\leq
\Lambda\,e^{\|\overline{u}\|_\infty}
\leq
e\Lambda.
\]

 \medskip
 
 \noindent4.\quad{}We have obtained a non-decreasing sequence $\{u_k\}_{k\geq0}$, uniformly bounded by $\underline{u}$ and $\overline{u}$ given by \eqref{explicit.exprss}. Therefore, we can pass to the limit in the viscosity sense in the same way as in Proposition \ref{prop:NO.exist.beyond.Lambda.hat} and get a viscosity solution $u_\Lambda$ of problem \eqref{eq:limite.existence.minimal} as intended.  It is also clear that the solution $u_\Lambda$ we just found is minimal for every $\Lambda\in(0,\Lambda_{\max}]$, because any solution of  \eqref{eq:limite.existence.minimal} could be taken as $\overline{u}$ in the iteration (note that the function $u_\Lambda$ does not depend on $\overline{u}$). Moreover, by \eqref{explicit.exprss} and \eqref{uk.increas.bdd}, we have
 \[
 \Lambda\,\textnormal{dist}(x,\partial\Omega)
\leq u_{\Lambda}(x)\leq
e\Lambda\,\textnormal{dist}(x,\partial\Omega)
\qquad\text{in}\ \Omega.
 \]
Therefore, by  Corollary \ref{corolario.unicidad.limit.concavo}, $u_\Lambda$ is the only solution of \eqref{eq:limite.existence.minimal} with 
  $\|u\|_\infty< 1$ for every $\Lambda\in(0,\Lambda_{\max})$.

  \medskip
 
 \noindent5.\quad{}  Let us prove that the branch of minimal solutions is non-decreasing, i.e., 
$u_{\Lambda}\leq u_{\Upsilon}$ whenever $0<\Lambda<\Upsilon<\Lambda_{\max}$. To this aim, let us just observe that we can repeat the above construction  taking $\overline{u}=u_\Upsilon$ and keeping $ \underline{u}(x)=\Lambda\,\textnormal{dist}(x,\partial\Omega)$ as  before. In this way, we recover the minimal solution $u_\Lambda$ with the estimate $u_{\Lambda}\leq u_{\Upsilon}<1$.

 We conclude by showing that the branch of minimal solutions is a continuum.  Arguing again as in the proof of Proposition \ref{prop:NO.exist.beyond.Lambda.hat}, we see that, for every $\Lambda\in(0,\Lambda_{\max})$, the uniform limits
\[
\widehat{u}_\Lambda=\lim_{\Upsilon\to\Lambda^+}u_\Upsilon,\qquad \textnormal{and}\qquad\widecheck{u}_\Lambda=\lim_{\Upsilon\to\Lambda^-}u_\Upsilon
\]
are both viscosity solutions of \eqref{eq:limite.existence.minimal} with   $\max\big\{\|\widehat{u}_\Lambda\|_\infty,\|\widecheck{u}_\Lambda\|_\infty\big\}< 1$. Therefore $\widehat{u}_\Lambda\equiv\widecheck{u}_\Lambda$ by    Corollary ~\ref{corolario.unicidad.limit.concavo}, as desired.
\end{proof}


\section{Minimal solutions  achieved as limits of $p$-minimal solutions as $p\to\infty$}\label{p.minimal.limits}

This section shows that uniform limits  of appropriately scaled, minimal solutions  of
\begin{equation}\label{main.problem.finite.p.minimal}
\left\{
\begin{aligned}
-&\Delta_{p} u=\lambda\,e^{u}&&\textrm{in}\ \Omega\subset\R^n\\
&u=0 &&\textnormal{on}\ \partial\Omega
\end{aligned}
\right.
\end{equation}
converge  to the minimal solutions of the limit problem \eqref{eq:limite.existence.minimal}, found in Section ~\ref{existence.minimal.limit.problem}.
Observe that the fact that the limit solution is  minimal is nontrivial; in principle, a limit solution could be different from the minimal one.
Here is where we use the uniqueness results  from Section \ref{comparison}. We prove the following.

\begin{theorem}\label{convergencia.minimales}
Let $\Lambda\in(0,\Lambda_{\max})$, and $\{\lambda_{p}\}_{p}$ be a sequence such that 
\[
\lim_{p\to\infty}\frac{\lambda_{p}^{1/p}}{p}=\Lambda.
\] 
For each $\lambda_p$, 
consider   $u_{\lambda_p,p}$, the minimal  solution of \eqref{main.problem.finite.p.minimal} for $\lambda=\lambda_p$.
Then, 
\[
\frac{u_{\lambda_{p},p}}{p}\to u_\Lambda,\quad\text{uniformly as}\ p\to\infty,
\]
where $u_\Lambda$ is the minimal solution of the limit problem \eqref{eq:limite.existence.minimal}.
\end{theorem}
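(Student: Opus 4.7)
The plan is to combine a priori bounds on the rescaled minimal $p$-solutions with Proposition~\ref{proposition.limit.eq} and the uniqueness of small solutions (Corollary~\ref{corolario.unicidad.limit.concavo}). The decisive point is to show that $\|u_{\lambda_p,p}/p\|_\infty$ is bounded by a constant strictly less than $1$ for all large $p$; once this is established, any uniform subsequential limit $u$ is a solution of \eqref{eq:limite.1} with $\|u\|_\infty<1$, so Corollary~\ref{corolario.unicidad.limit.concavo} identifies it with the minimal $u_\Lambda$ from Theorem~\ref{theorem:exist.for.Lambda.max}.

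First I would construct a uniform upper barrier at the $p$-level. Fix $\tilde\Lambda\in(\Lambda,\Lambda_{\max})$, let $w_p$ be the solution of the $p$-torsion problem \eqref{aux.kawohl2}, and consider $\overline{U}_p:=C_p\,w_p$ with $C_p\sim e\tilde\Lambda\,p$. Since $-\Delta_p(C_p w_p)=C_p^{p-1}$, $\overline{U}_p$ is a supersolution of \eqref{main.problem.finite.p.minimal} as soon as $C_p^{p-1}\ge\lambda_p\,e^{C_p\|w_p\|_\infty}$. Taking logarithms and using $\lambda_p^{1/p}/p\to\Lambda$ together with $\|w_p\|_\infty\to\Lambda_1(\Omega)^{-1}$ from Proposition~\ref{yllegokawohlymandoaparar}, this inequality reduces asymptotically to
\[
1+\log(\tilde\Lambda/\Lambda)\;\ge\;e\tilde\Lambda\,\Lambda_1(\Omega)^{-1}\;=\;\tilde\Lambda/\Lambda_{\max},
\]
which holds strictly for $\tilde\Lambda$ chosen sufficiently close to $\Lambda$, precisely because $\Lambda<\Lambda_{\max}$. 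Since $u_{\lambda_p,p}$ is the minimal $p$-solution and $0$ is a subsolution, a standard monotone iteration yields $0\le u_{\lambda_p,p}\le\overline{U}_p$ in $\Omega$; hence
\[
\limsup_{p\to\infty}\Big\|\frac{u_{\lambda_p,p}}{p}\Big\|_\infty\le\frac{\tilde\Lambda}{\Lambda_{\max}}<1.
\]

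Next I would extract a convergent subsequence. The uniform sup-norm bound, together with gradient-by-distance estimates for $p$-superharmonic functions (compare \cite{lindqvist-manfredi1,lindqvist-manfredi2}) that remain stable as $p\to\infty$, yields local equicontinuity of $u_{\lambda_p,p}/p$ in $\Omega$. Combined with the uniform control at the boundary provided by the upper barrier $(C_p/p)\,w_p\to e\tilde\Lambda\,\textrm{dist}(\cdot,\partial\Omega)$, the Arzel\`a-Ascoli theorem produces a subsequence converging uniformly in $\overline{\Omega}$ to some $u\in C(\overline{\Omega})$ with $u=0$ on $\partial\Omega$. By Proposition~\ref{proposition.limit.eq}, $u$ is a viscosity solution of \eqref{eq:limite.1}, and the upper barrier passes to the limit to give $\|u\|_\infty\le\tilde\Lambda/\Lambda_{\max}<1$. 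Corollary~\ref{corolario.unicidad.limit.concavo} then forces $u\equiv u_\Lambda$, and since every subsequential limit coincides with $u_\Lambda$, the whole sequence converges uniformly.

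The main technical obstacle is the $p$-level supersolution construction with the correct growth in $p$: one must choose $C_p$ so that the inequality $C_p^{p-1}\ge\lambda_p\,e^{C_p\|w_p\|_\infty}$ survives the passage through logarithms while $C_p/p$ still approaches some $e\tilde\Lambda$ with $\tilde\Lambda<\Lambda_{\max}$. This is exactly where the hypothesis $\Lambda<\Lambda_{\max}$ is used quantitatively, through the elementary inequality $1+\log(\tilde\Lambda/\Lambda)>\tilde\Lambda/\Lambda_{\max}$ for $\tilde\Lambda$ slightly larger than $\Lambda$. A secondary, more routine difficulty is the equicontinuity step, since the equation itself varies with $p$; this can be handled by distance-function gradient estimates that are stable in $p$, exactly as in the sequential-limit arguments already exploited in the proofs of Proposition~\ref{prop:NO.exist.beyond.Lambda.hat} and Theorem~\ref{theorem:exist.for.Lambda.max}.
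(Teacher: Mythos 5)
Your proposal is correct in outline and follows essentially the same strategy as the paper: establish $\limsup_{p\to\infty}\|u_{\lambda_p,p}/p\|_\infty<1$, extract a uniformly convergent subsequence, identify any limit as a viscosity solution of the limit problem via Proposition~\ref{proposition.limit.eq}, and then use the uniqueness of small solutions (Corollary~\ref{corolario.unicidad.limit.concavo}, through Theorem~\ref{theorem:exist.for.Lambda.max}) to conclude every subsequential limit is the minimal solution, hence full convergence. Your first step is a re-derivation of what the paper already has: the barrier $C_p w_p$ with $C_p\sim e\tilde\Lambda p$ and the logarithmic inequality $1+\log(\tilde\Lambda/\Lambda)>\tilde\Lambda/\Lambda_{\max}$ is exactly the computation behind the threshold $\widecheck\lambda_p$ in \eqref{lambda0p}; the paper simply invokes estimate \eqref{bounds.for.m.lambda.p} of Proposition~\ref{prop:exist.for.lambda.hat.finite.p}, valid as soon as $\lambda_p\le\widecheck\lambda_p$ (true for large $p$ because $\widecheck\lambda_p^{1/p}/p\to\Lambda_{\max}>\Lambda$), together with $v_p\to\textnormal{dist}(\cdot,\partial\Omega)$ from Proposition~\ref{yllegokawohlymandoaparar}, to get $\|u_{\lambda_p,p}\|_\infty/p\le(1+\epsilon)\Lambda\Lambda_{\max}^{-1}<1$. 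Your version of this step, including the monotone-iteration comparison with the supersolution, is sound.

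The one step you should make precise is the equicontinuity. The paper gets it by testing the equation with $u_{\lambda_p,p}$, which gives $\int_\Omega|\nabla u_{\lambda_p,p}|^p\,dx=\lambda_p\int_\Omega u_{\lambda_p,p}e^{u_{\lambda_p,p}}\,dx$, and then applying a Morrey-type inequality with constant independent of $m,p$ (\cite[Lemma 3.3]{Charro.Parini}) to obtain the $p$-uniform H\"older bound \eqref{morrey.main.problem}, after which Arzel\`a--Ascoli applies to $u_{\lambda_p,p}/p$. Your appeal to ``gradient-by-distance estimates for $p$-superharmonic functions that remain stable as $p\to\infty$'' is not covered by the references you point to: the estimate $|\nabla w|\le w/\textnormal{dist}(\cdot,\partial\Omega)$ of \cite{lindqvist-manfredi1,lindqvist-manfredi2} is for $\infty$-superharmonic functions (and is used in the paper only for the iterates of the limit problem), so a finite-$p$ analogue with constants uniform in $p$ would require a separate argument --- for instance, noting that $u_{\lambda_p,p}/p$ solves a $p$-Laplace equation whose right-hand side has uniformly bounded $(p-1)$-st root and invoking estimates in the spirit of \cite{Bha-DiBe-Man}, or simply running the paper's energy/Morrey computation. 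This gap is fillable by standard means and does not affect the architecture of your argument, but as written it is asserted rather than proved.
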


We devote the rest of the section to the proof of Theorem \ref{convergencia.minimales}. In order to obtain estimates that allow us to pass to the limit, we provide an explicit construction of the branch of minimal  solutions of \eqref{main.problem.finite.p.minimal}.
Although these  are rather classic facts, see \cite{GP2, GPP}, some of our results appear to be new. Additionally,  we provide a modified, more streamlined, and systematic construction that exhibits  the  dependences on $p$ at each step, which is necessary in order to pass to the limit.

First, we show that problem \eqref{main.problem.finite.p.minimal} has a minimal solution up to a certain, explicit $\widecheck{\lambda}_{p}$.

\begin{prop}\label{prop:exist.for.lambda.hat.finite.p}
Let $\Omega\subset\R^n$ be a bounded domain and $p>n$. Then,
problem \eqref{main.problem.finite.p.minimal}
 has a minimal solution $u_{\lambda,p}(x)$ for every $\lambda\in(0,\widecheck{\lambda}_{p}]$, where
\begin{equation}\label{lambda0p}
\widecheck{\lambda}_{p}
=\left(\frac{p-1}{e\|v_{p}\|_\infty}\right)^{p-1}
\end{equation}
and $v_{p}$ is given by \eqref{aux.kawohl2}.
Moreover, 
\begin{enumerate}\itemsep3pt
 \item  For every $\lambda\leq\widecheck{\lambda}_{p}$, we have the estimate
\begin{equation} \label{bounds.for.m.lambda.p}
 \lambda^\frac{1}{p-1}  v_{p}(x)
 \leq
u_{\lambda,p}(x)
 \leq
e\, \lambda^\frac{1}{p-1}  v_{p}(x)\quad\textrm{in}\ \overline\Omega.
\end{equation}
 
 \item  For every $\lambda\leq\widecheck{\lambda}_{p}$, the minimal solution $u_{\lambda,p}$ is the only solution of  \eqref{main.problem.finite.p.minimal} with   $\|u\|_\infty\leq p-1$.

\item The branch of minimal solutions is non-decreasing, in the sense that  if
$0<\lambda<\mu\leq\widecheck
{\lambda}_{p}$, then 
$u_{\lambda,p}\leq u_{\mu,p}$ in $\Omega$.
\end{enumerate}

\end{prop}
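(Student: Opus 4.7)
The plan is to mirror the construction of Theorem~\ref{theorem:exist.for.Lambda.max}, using the $p$-torsion function $v_p$ from Proposition~\ref{yllegokawohlymandoaparar} as the basic building block in place of the distance function. Setting
\[
\underline{u}(x)=\lambda^{1/(p-1)}v_p(x),\qquad \overline{u}(x)=e\,\lambda^{1/(p-1)}v_p(x),
\]
the $(p-1)$-homogeneity of $-\Delta_p$ together with \eqref{aux.kawohl2} yields $-\Delta_p\underline{u}=\lambda$ and $-\Delta_p\overline{u}=e^{p-1}\lambda$. Thus $\underline{u}$ is always a subsolution of \eqref{main.problem.finite.p.minimal} (since $\lambda\leq\lambda\,e^{\underline{u}}$), while $\overline{u}$ is a supersolution precisely when $e^{p-1}\geq e^{\|\overline{u}\|_\infty}$, that is, when $\|\overline{u}\|_\infty=e\,\lambda^{1/(p-1)}\|v_p\|_\infty\leq p-1$. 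This is exactly the condition $\lambda\leq\widecheck{\lambda}_p$ from \eqref{lambda0p}, and it already encodes the two-sided estimate \eqref{bounds.for.m.lambda.p} once any solution is squeezed between these barriers.

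Next I would run the iteration: set $u_0=\underline{u}$ and, for $k\geq 0$, let $u_{k+1}\in W_0^{1,p}(\Omega)$ be the unique weak solution of $-\Delta_p u_{k+1}=\lambda\,e^{u_k}$ with zero boundary data (existence and uniqueness are standard for a bounded, positive right-hand side). An induction based on weak comparison for the $p$-Laplacian with $L^\infty$ data shows $\underline{u}\leq u_k\leq u_{k+1}\leq \overline{u}$ in $\Omega$, exactly as in Steps~2--3 of the proof of Theorem~\ref{theorem:exist.for.Lambda.max}. The monotone limit $u_{\lambda,p}=\lim_k u_k$ is a (weak, hence viscosity) solution of \eqref{main.problem.finite.p.minimal} obeying \eqref{bounds.for.m.lambda.p}. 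Minimality and statement~(3) then follow by the same trick as before: any other solution $w$ of \eqref{main.problem.finite.p.minimal} satisfies $-\Delta_p w=\lambda\,e^{w}\geq\lambda=-\Delta_p\underline{u}$, hence $w\geq\underline{u}$ by comparison, and using $w$ as $\overline{u}$ in the iteration forces $u_{\lambda,p}\leq w$; for $\lambda<\mu\leq\widecheck{\lambda}_p$, the minimal solution $u_{\mu,p}$ itself serves as a supersolution at level $\lambda$ (because $\mu\,e^{u_{\mu,p}}\geq\lambda\,e^{u_{\mu,p}}$), so running the iteration with $\overline{u}=u_{\mu,p}$ yields $u_{\lambda,p}\leq u_{\mu,p}$.

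I expect the main obstacle to be~(2), the uniqueness of solutions with $\|u\|_\infty\leq p-1$: since $\lambda\,e^u$ is increasing in $u$, the equation is not proper and neither standard weak comparison nor viscosity comparison applies directly. My plan is to invoke a Brezis--Oswald-type argument, whose $p$-Laplacian version requires the map $s\mapsto\lambda\,e^s/s^{p-1}$ to be non-increasing on the range of the competing solutions. A quick derivative computation shows that this map has derivative of the sign of $s-(p-1)$, so it is non-increasing on $(0,p-1]$ and strictly decreasing on $(0,p-1)$; this yields uniqueness of positive weak solutions with $\|u\|_\infty\leq p-1$ through the standard Picone/Diaz--Saa test-function identity. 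Combined with \eqref{bounds.for.m.lambda.p}, which forces $\|u_{\lambda,p}\|_\infty\leq e\,\lambda^{1/(p-1)}\|v_p\|_\infty\leq p-1$ exactly on the range $\lambda\leq\widecheck{\lambda}_p$, this identifies $u_{\lambda,p}$ as the only solution of size $\leq p-1$. As an alternative, one could pass to the viscosity formulation and adapt the change-of-variables comparison of Section~\ref{comparison} to the finite-$p$ operator.
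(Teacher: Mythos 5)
Your proposal is correct and follows essentially the same route as the paper: the same barriers $\lambda^{1/(p-1)}v_p$ and $e\,\lambda^{1/(p-1)}v_p$ (with $\lambda\leq\widecheck{\lambda}_p$ exactly encoding that the upper one is a supersolution), the same monotone iteration with weak comparison, and the same device of rerunning the iteration with an arbitrary solution (or with $u_{\mu,p}$) as the upper barrier to get minimality and monotonicity of the branch. For part (2), the paper simply cites a comparison lemma adapted from Abdellaoui--Peral under the condition that $t\mapsto \lambda e^{t}/t^{p-1}$ is non-increasing on $(0,p-1)$, which is precisely the Brezis--Oswald/Diaz--Saa type argument you propose to carry out directly, so the difference is only in packaging.
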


The uniqueness result in part 2  of  Proposition \ref{prop:exist.for.lambda.hat.finite.p} appears to be new. For the proof, we use the following  comparison principle, an adaptation of \cite[Lemma 4.1]{Abdellaoui.Peral.2003} to problems that are proper (in the sense of \cite{CIL}) only for ``small" sub- and supersolutions.

\begin{lemma}\label{theorem.ppio.comparacion.plap}
 Let $p>1$ and $f:\mathbb{R}\to\mathbb{R}$ be a non-negative continuous function  for which there exists $c\in(0,\infty]$  such that 
  \begin{equation*}
\frac{f(t)}{t^{p-1}}\quad\text{is non-increasing for all}\ t\in(0,c).
\end{equation*}
Assume that $u,v\in W_0^{1,p}(\Omega)\cap C^1(\Omega)$ are positive in $\Omega$, $\max\{\|u\|_\infty,\|v\|_\infty\}\leq c$ and 
\[
-\Delta_pu\leq f(u)\quad \textrm{and}\quad -\Delta_pv\geq f(v) \qquad\textrm{in}\ \Omega.
\]
Then $u\leq v$ in $\Omega$.
\end{lemma}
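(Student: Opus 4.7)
The plan is to adapt the classical Picone-identity comparison argument (as in Diaz--Saa, and Brezis--Oswald) to the setting where the monotonicity of $f(t)/t^{p-1}$ only holds up to the threshold $c$. Arguing by contradiction, suppose the open set $A=\{x\in\Omega:u(x)>v(x)\}$ is nonempty. The natural test functions suggested by Picone's identity are
\[
\phi_1=\frac{(u^p-v^p)^+}{u^{p-1}},\qquad \phi_2=\frac{(u^p-v^p)^+}{v^{p-1}},
\]
both supported in $\overline{A}$. Since $u,v\in W^{1,p}_0(\Omega)\cap C^1(\Omega)$ are strictly positive in $\Omega$, a standard truncation and approximation procedure (replacing $u,v$ by $u+\varepsilon,v+\varepsilon$ near $\partial\Omega$ and passing to the limit $\varepsilon\to 0^+$, using that both vanish in the $W^{1,p}$-trace sense) shows $\phi_1,\phi_2$ are admissible elements of $W^{1,p}_0(\Omega)$.

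Testing the weak inequality $-\Delta_pu\leq f(u)$ with $\phi_1$ and $-\Delta_pv\geq f(v)$ with $\phi_2$, and subtracting, one obtains after rearrangement
\[
\int_A\Big[|\nabla v|^{p-2}\nabla v\cdot\nabla\phi_2-|\nabla u|^{p-2}\nabla u\cdot\nabla\phi_1\Big]\,dx\;\geq\;\int_A(u^p-v^p)\left(\frac{f(v)}{v^{p-1}}-\frac{f(u)}{u^{p-1}}\right)\,dx.
\]
The left-hand side is controlled by the Picone inequality
\[
|\nabla v|^{p-2}\nabla v\cdot\nabla\!\left(\tfrac{u^p}{v^{p-1}}\right)\leq|\nabla u|^p,
\]
together with its symmetric counterpart (with $u,v$ interchanged): expanding the gradients of $\phi_1$ and $\phi_2$ and combining the two Picone bounds, the Dirichlet terms telescope and yield that the left-hand side is nonpositive. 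Conversely, the right-hand side is nonnegative on $A$: since $0<v<u\leq c$ there, the hypothesis that $f(t)/t^{p-1}$ is non-increasing on $(0,c)$ gives $f(v)/v^{p-1}\geq f(u)/u^{p-1}$ pointwise, while $u^p-v^p>0$.

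Combining both bounds forces both integrals to vanish, and the contradiction is extracted from the equality case. The main obstacle is precisely this rigidity step: equality in Picone forces $u$ and $v$ to be proportional on each connected component of $A$, i.e., $u=\kappa v$ with $\kappa>1$ there. For any component whose boundary meets $\Omega$, continuity across $\partial A\cap\Omega$ (where $u=v>0$) imposes $\kappa=1$, contradicting $u>v$ on $A$. In the applications we care about (namely $f(t)=\Lambda e^t$ with $c=p-1$, used in Proposition \ref{prop:exist.for.lambda.hat.finite.p}), the monotonicity of $f(t)/t^{p-1}$ is in fact strict on $(0,c)$, so vanishing of the right-hand side already forces $|A|=0$ directly, bypassing the rigidity argument and yielding $u\leq v$ in $\Omega$.
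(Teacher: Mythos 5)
The paper itself does not prove this lemma; it refers to the comparison lemma of Abdellaoui and Peral, whose proof is precisely the Picone/Diaz--Saa testing argument you employ, so your route is the expected one, and your sign bookkeeping is correct (the Dirichlet difference is nonpositive by the two Picone inequalities, the reaction difference is nonnegative by the monotonicity of $f(t)/t^{p-1}$ on $A=\{u>v\}$). One technical caveat: you cannot simply assert that $\phi_2=(u^p-v^p)^+/v^{p-1}$ is an admissible element of $W_0^{1,p}(\Omega)$. With only $u,v\in W_0^{1,p}(\Omega)\cap C^1(\Omega)$ and no boundary regularity, there is no Hopf-type control of $u/v$ near $\partial\Omega$, so the approximation must be organized the other way around: test with the functions built from $u+\varepsilon$ and $v+\varepsilon$, which are admissible because $\big((u+\varepsilon)^p-(v+\varepsilon)^p\big)^+$ is bounded and has zero trace, and then let $\varepsilon\to0$ in the resulting integral inequality; the un-shifted $\phi_1,\phi_2$ never need to belong to $W_0^{1,p}(\Omega)$. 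This is standard and repairable, so I regard it as a presentational issue.

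The substantive gap is in your rigidity step. Since $\Omega$ is connected, the only component of $A$ whose boundary fails to meet $\Omega$ is $A=\Omega$ itself, and that is exactly the case your continuity argument cannot reach: $u=\kappa v$ in all of $\Omega$ with $\kappa>1$ is compatible with both differential inequalities and with the identity $f(u)/u^{p-1}=f(v)/v^{p-1}$ when the quotient is merely non-increasing. In fact, the statement is false at that level of generality: take $f(t)=t^{p-1}$ for $t\ge0$ (extended by zero), so $f(t)/t^{p-1}\equiv1$ is non-increasing with $c=\infty$, normalize $\Omega$ so that $\lambda_1(p;\Omega)=1$, let $v$ be the positive first eigenfunction and $u=2v$; all hypotheses hold, yet $u>v$ everywhere. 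Hence no argument can close the case $A=\Omega$ without strengthening the hypothesis to strict decrease of $f(t)/t^{p-1}$ on $(0,c)$, which is the hypothesis of the cited lemma and which does hold for $f(t)=\lambda e^t$ on $(0,p-1)$, the case needed in Proposition \ref{prop:exist.for.lambda.hat.finite.p}. Your closing remark already observes that under strict monotonicity the vanishing of the right-hand side forces $|A|=0$ directly; that strict-monotonicity version is what your argument actually proves, and it is the version the paper uses, but as a proof of the lemma as literally stated the case $A=\Omega$ remains open (and cannot be closed).
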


We omit the proof of the lemma since it is a straightforward modification of
\cite[Lemma 4.1]{Abdellaoui.Peral.2003} 
 (note that  $c=\infty$ in \cite{Abdellaoui.Peral.2003}). We proceed now with the proof of Proposition \ref{prop:exist.for.lambda.hat.finite.p}.

\begin{proof}[Proof of Proposition \ref{prop:exist.for.lambda.hat.finite.p}]
\noindent1.\quad{}Consider $\underline{u}$ and $\overline{u}$, the respective solutions of
\[
\left\{
\begin{aligned}
-&\Delta_p \underline{u}=\lambda&&\text{in}\ \Omega\\
&\underline{u}=0&&\textrm{on}\ \partial\Omega
\end{aligned}
\right.
\]
and
\[
\left\{
\begin{aligned}
-&\Delta_p  \overline{u}=\lambda \,e^{p-1}&&\text{in}\ \Omega\\
&\overline{u}=0&&\textrm{on}\ \partial\Omega.
\end{aligned}
\right.
\]
By the weak comparison principle for the $p$-Laplacian, we have that
$0\leq \underline{u}\leq\overline{u}$ in $\overline\Omega.$
Define now $u_1$,  solution of 
\begin{equation}\label{eqn:u1.p}
\left\{
\begin{aligned}
-&\Delta_p u_1=\lambda \,e^{\underline{u}}&&\text{in}\ \Omega\\
&u_1=0&&\textrm{on}\ \partial\Omega.
\end{aligned}
\right.
\end{equation}
We clearly have $-\Delta_p u_1\geq\lambda=-\Delta_p \underline{u}$. On the other hand, we find $\underline{u}=\lambda^\frac{1}{p-1}  v_{p}$ by rescaling, which together with \eqref{lambda0p} yields
\[
-\Delta_p u_1
\leq
\lambda \,e^{\|\underline{u}\|_\infty}
\leq
\lambda \,e^{(\widecheck\lambda_p)^{1/(p-1)}  \|v_{p}\|_\infty}
=
\lambda \,e^{(p-1)/e}
\leq
-\Delta_p  \overline{u}.
\]
Then, by the weak comparison principle we have
$\underline{u}\leq u_1\leq\overline{u}$ in $ \overline\Omega.$

\medskip

\noindent2.\quad{}Now, for each $k\geq 1$ define $u_{k+1}$,  solution of 
\[
\left\{
\begin{aligned}
-&\Delta_p u_{k+1}=\lambda \,e^{u_{k}}&&\text{in}\ \Omega\\
&u_{k+1}=0&&\textrm{on}\ \partial\Omega
\end{aligned}
\right.
\]
with  $u_1$ defined by \eqref{eqn:u1.p}. 
Let us show by induction that 
\[
\underline{u}\leq u_{k}\leq u_{k+1}\leq\overline{u}\quad\text{in}\ \overline\Omega
\]
for all $k\geq1$.
 It is easy to see that 
$\underline{u}\leq u_{k}\leq u_{k+1}$ by comparison. To prove $u_{k+1}\leq\overline{u}$,  notice that the induction hypothesis, the rescaling
$\overline{u}=\lambda^\frac{1}{p-1} e\, v_{p}$, and  \eqref{lambda0p}  yield
\[
-\Delta_p u_{k+1}
=
\lambda \,e^{u_{k}}
\leq
\lambda \,e^{\|\overline{u}\|_\infty}
\leq
\lambda \,e^{(\widecheck\lambda_p)^{1/(p-1)} e\, \|v_{p}\|_\infty}
=
 \lambda\, e^{p-1}
=-\Delta_p  \overline{u}.
\]
Then,  $u_{k+1}\leq\overline{u}$ follows by comparison.

\medskip

\noindent3.\quad{}We have obtained an increasing sequence $\{u_k\}_{k\geq0}$, uniformly bounded by $\underline{u}$ and $\overline{u}$.
 Therefore, we can pass to the limit and get a solution $u_{\lambda,p}$ that satisfies the bounds \eqref{bounds.for.m.lambda.p}. It is also clear that $u_{\lambda,p}$ is minimal, because any solution of \eqref{main.problem.finite.p.minimal} could be taken as $\overline{u}$ in the iterative scheme (note that each $u_k$ does not depend on $\overline{u}$). 
Similarly, we see that the branch of minimal solutions is non-decreasing, since whenever $\lambda<\mu$, we can take $\overline{u}=u_{\mu,p}$ in the construction of $u_{\lambda,p}$ and obtain $u_{\lambda,p}\leq u_{\mu,p}$. 

\medskip

\noindent4.\quad{}Finally, let us  denote $f(t)=\lambda\,e^t$. It is elementary to see that $f(t)/t^{p-1}$ is non-increasing for $0<t< p-1$. Moreover, by \eqref{lambda0p} and \eqref{bounds.for.m.lambda.p} we have that $\|u_{\lambda,p}\|_\infty\leq p-1$.
Therefore, we can apply 
 Lemma \ref{theorem.ppio.comparacion.plap} with $c=p-1$ and conclude  that  $u_{\lambda,p}$ is the only solution of  \eqref{main.problem.finite.p.minimal} with   $\|u\|_\infty\leq p-1$ for every $\lambda\in(0,\widecheck{\lambda}_{p}]$.
\end{proof}

The next result states  that problem \eqref{main.problem.finite.p.minimal}
has no solution for large $\lambda$; that is, there is a value  $\widehat{\lambda}_p>0$ such that \eqref{main.problem.finite.p.minimal} has no weak solution  with $\lambda>\widehat{\lambda}_p$.

\begin{prop}[{\cite[Theorem 2.1]{GP2}}]\label{existNOpp}
Problem \eqref{main.problem.finite.p.minimal}
does not have a solution for $\lambda
>
\widehat{\lambda}_p$, where
\begin{equation}\label{lambda.hat.p}
\widehat{\lambda}_p
=
\lambda_1(p,\Omega)
\cdot
\max
\left\{
1,
\left(\frac{p-1}{e}\right)^{p-1}
\right\}.
\end{equation}
\end{prop}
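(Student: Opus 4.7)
My plan is to argue by contradiction using a classical Picone-type eigenvalue comparison. Suppose $u \in W_0^{1,p}(\Omega)$ is a weak solution of \eqref{main.problem.finite.p.minimal} for some $\lambda > \widehat{\lambda}_p$. Since $-\Delta_p u = \lambda e^u > 0$ and $u = 0$ on $\partial\Omega$, the weak and strong maximum principles give $u > 0$ in $\Omega$, and Hopf's lemma furnishes a constant $c>0$ with $u \geq c\,\textnormal{dist}(\cdot,\partial\Omega)$.

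The key elementary ingredient is the one-variable inequality
\[
e^t \;\geq\; \frac{e^{p-1}}{(p-1)^{p-1}}\, t^{p-1} \qquad\text{for all}\ t \geq 0,
\]
obtained by minimizing $t \mapsto e^{t}/t^{p-1}$ on $(0,\infty)$ (the critical point is $t = p-1$, which is the global minimizer since the ratio tends to $+\infty$ at both ends). Applied pointwise to $u$, this converts the Gelfand equation into the differential inequality
\[
-\Delta_p u \;=\; \lambda\, e^u \;\geq\; \lambda\,\frac{e^{p-1}}{(p-1)^{p-1}}\, u^{p-1} \quad\text{in}\ \Omega.
\]

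Now let $\psi_p > 0$ be a first $p$-eigenfunction, so that $-\Delta_p \psi_p = \lambda_1(p,\Omega)\,\psi_p^{p-1}$. The Picone inequality of Allegretto--Huang, applied to the pair $(u+\varepsilon,\psi_p)$ and passed to the limit $\varepsilon \to 0^+$ using the Hopf bound on $u$, yields
\[
\lambda_1(p,\Omega)\int_\Omega \psi_p^p\,dx \;=\; \int_\Omega |\nabla \psi_p|^p\,dx \;\geq\; \int_\Omega \frac{-\Delta_p u}{u^{p-1}}\,\psi_p^p\,dx \;\geq\; \lambda\,\frac{e^{p-1}}{(p-1)^{p-1}} \int_\Omega \psi_p^p\,dx.
\]
Dividing through gives $\lambda \leq \lambda_1(p,\Omega) \cdot \bigl((p-1)/e\bigr)^{p-1}$. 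Since $((p-1)/e)^{p-1} \leq \max\{1,((p-1)/e)^{p-1}\}$, this contradicts $\lambda > \widehat{\lambda}_p$ and the proposition follows.

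The main technical obstacle is the rigorous use of Picone: the formal test function $\psi_p^p/u^{p-1}$ is not a priori admissible in $W_0^{1,p}(\Omega)$ since both $u$ and $\psi_p$ vanish on $\partial\Omega$. The standard workaround is to test the weak equation against $\psi_p^p/(u+\varepsilon)^{p-1}$, use the generalized Picone inequality to bound the resulting integrand, and then pass $\varepsilon \to 0^+$ using monotone/dominated convergence, with the Hopf estimate $u \geq c\,\textnormal{dist}(\cdot,\partial\Omega)$ providing the needed control near the boundary. The $\max$ with $1$ in the statement is essentially cosmetic: the Picone step actually produces the sharper bound $\lambda \leq \lambda_1(p,\Omega)\cdot((p-1)/e)^{p-1}$, which is automatically $\leq \lambda_1(p,\Omega)$ when $p \leq e+1$, so the two expressions coincide with $\widehat{\lambda}_p$ precisely on the regime $p > e+1$ where the exponential-type nonlinearity dominates.
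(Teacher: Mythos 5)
The paper offers no proof of this proposition at all: it is imported verbatim from \cite[Theorem 2.1]{GP2}, so there is no internal argument to compare with, and your write-up supplies a self-contained proof where the paper relies on a citation. Your route (pointwise inequality $e^t\ge e^{p-1}(p-1)^{-(p-1)}t^{p-1}$ for $t\ge0$, followed by the Allegretto--Huang Picone inequality with the regularized test function $\psi_p^p/(u+\varepsilon)^{p-1}$) is sound and in fact proves a slightly stronger statement: nonexistence already for $\lambda>\lambda_1(p,\Omega)\left(\frac{p-1}{e}\right)^{p-1}$, which implies the stated result because $\widehat{\lambda}_p\ge \lambda_1(p,\Omega)\left(\frac{p-1}{e}\right)^{p-1}$, so the $\max\{1,\cdot\}$ is indeed only a loss of sharpness. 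This sharper threshold is consistent with the existence range of Proposition \ref{prop:exist.for.lambda.hat.finite.p}, since $\widecheck{\lambda}_p=\big(\tfrac{p-1}{e\|v_p\|_\infty}\big)^{p-1}\le \lambda_1(p,\Omega)\left(\frac{p-1}{e}\right)^{p-1}$ (compare the suitably rescaled torsion function $v_p$ with the first eigenfunction normalized by $\|\psi_p\|_\infty=1$). Admissibility of the test function is unproblematic in the paper's setting $p>n$, where $u\in W_0^{1,p}(\Omega)\cap C(\overline\Omega)$ is bounded and $u+\varepsilon\ge\varepsilon$.

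One correction to the technical discussion: the appeal to Hopf's lemma is both unjustified and unnecessary. The paper assumes only that $\Omega$ is a bounded domain, so there is no interior ball or $C^{1,\alpha}$ boundary regularity available and you cannot claim $u\ge c\,\textnormal{dist}(\cdot,\partial\Omega)$. Fortunately no boundary control is needed: after Picone you have $\lambda_1(p,\Omega)\int_\Omega\psi_p^p\,dx\ \ge\ \lambda\int_\Omega e^{u}\,\psi_p^p\,(u+\varepsilon)^{-(p-1)}dx\ \ge\ \lambda\,\tfrac{e^{p-1}}{(p-1)^{p-1}}\int_\Omega\big(\tfrac{u}{u+\varepsilon}\big)^{p-1}\psi_p^p\,dx$, and the last integral tends to $\int_\Omega\psi_p^p\,dx$ as $\varepsilon\to0^+$ by dominated (or monotone) convergence, using only $0\le\big(\tfrac{u}{u+\varepsilon}\big)^{p-1}\le1$ and the interior positivity $u>0$ a.e.\ in $\Omega$, which follows from the strong maximum principle (V\'azquez) and needs no boundary regularity.
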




At this point we can define
\begin{equation}\label{defn.lambda.max.p}
\lambda_{\max,p}
=
\sup\big\{
\lambda>0\ : \ \textnormal{problem}\ \eqref{main.problem.finite.p.minimal}\ \textnormal{has a solution}
\big\}.
\end{equation}
In the next result we show that $\lambda_{\max,p}$  is well-defined, find its asymptotic behavior as $p\to\infty$, and complete the construction of the branch of minimal solutions.

\begin{prop}
Let  $\Omega\subset\R^n$ be a bounded domain and $p>n$. 
Then, $\lambda_{\max,p}$ 
given by
\eqref{defn.lambda.max.p} is well-defined (in the sense that it is positive and finite). Moreover, 
  \eqref{main.problem.finite.p.minimal} has a minimal solution $u_{\lambda,p}(x)$ for every $\lambda\in(0,\lambda_{\max,p})$ and no solution for $\lambda>\lambda_{\max,p}$. In addition,
\begin{equation}\label{estimate.lambda.max.p}
\widecheck{\lambda}_{p}
\leq\lambda_{\max,p}\leq\widehat{\lambda}_{p},
\end{equation}
where $\widecheck
\lambda_{p}$ and $\widehat{\lambda}_{p}
$
are respectively given by  \eqref{lambda0p}, \eqref{lambda.hat.p}, and 
\[
 \lim_{p\to\infty}\frac{\lambda_{\max,p}^{1/p}}{p}
 =
\Lambda_{\max}
\]
for $\Lambda_{\max}$ defined by \eqref{defn.lambda.hat}.
\end{prop}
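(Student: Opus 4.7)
First I would establish \eqref{estimate.lambda.max.p} and the well-definedness of $\lambda_{\max,p}$ simultaneously: Proposition \ref{prop:exist.for.lambda.hat.finite.p} ensures that $(0,\widecheck{\lambda}_p]$ is contained in the set defining $\lambda_{\max,p}$, so $\lambda_{\max,p}\geq\widecheck{\lambda}_p>0$, while Proposition \ref{existNOpp} shows this set is disjoint from $(\widehat{\lambda}_p,\infty)$, giving $\lambda_{\max,p}\leq\widehat{\lambda}_p<\infty$.

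Next, to produce a minimal solution for each $\lambda\in(0,\lambda_{\max,p})$, I would pick $\mu\in(\lambda,\lambda_{\max,p})$ for which \eqref{main.problem.finite.p.minimal} admits a solution $U_\mu$, which exists by the very definition of the supremum. This $U_\mu$ is a supersolution for parameter $\lambda$ since $-\Delta_pU_\mu=\mu e^{U_\mu}\geq\lambda e^{U_\mu}$, while $\underline{u}=\lambda^{1/(p-1)}v_p$ satisfies $-\Delta_p\underline{u}=\lambda\leq\lambda e^{\underline{u}}$ and is a subsolution. I would then repeat the monotone iteration of Proposition \ref{prop:exist.for.lambda.hat.finite.p}: take $u_0=\underline{u}$ and define $u_{k+1}$ as the weak solution of $-\Delta_pu_{k+1}=\lambda e^{u_k}$ with zero boundary data. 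The weak comparison principle yields $\underline{u}\leq u_k\leq u_{k+1}\leq U_\mu$ inductively, and standard $p$-Laplacian regularity (for $p>n$) provides enough compactness to pass to the limit and obtain a solution $u_{\lambda,p}$. Minimality follows because any solution $U$ of \eqref{main.problem.finite.p.minimal} could have been chosen as $U_\mu$, yielding $u_{\lambda,p}\leq U$; non-existence for $\lambda>\lambda_{\max,p}$ is immediate from the definition.

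For the asymptotic behavior I would squeeze using \eqref{estimate.lambda.max.p},
\[
\frac{\widecheck{\lambda}_p^{1/p}}{p}\leq\frac{\lambda_{\max,p}^{1/p}}{p}\leq\frac{\widehat{\lambda}_p^{1/p}}{p},
\]
and compute the two outer limits. The lower bound rewrites as
\[
\frac{\widecheck{\lambda}_p^{1/p}}{p}=\frac{p-1}{pe\|v_p\|_\infty}\left(\frac{p-1}{e\|v_p\|_\infty}\right)^{-1/p}\longrightarrow\frac{\Lambda_1(\Omega)}{e}=\Lambda_{\max},
\]
using Proposition \ref{yllegokawohlymandoaparar} (which gives $\|v_p\|_\infty\to\Lambda_1(\Omega)^{-1}$) and the elementary fact that $A_p^{1/p}\to 1$ whenever $A_p$ grows at most polynomially. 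For the upper bound, since $((p-1)/e)^{p-1}>1$ for large $p$, the maximum in \eqref{lambda.hat.p} is attained by this term, and Lemma \ref{lema.autovalores} gives
\[
\frac{\widehat{\lambda}_p^{1/p}}{p}=\lambda_1(p,\Omega)^{1/p}\cdot\frac{p-1}{pe}\cdot\left(\frac{p-1}{e}\right)^{-1/p}\longrightarrow\Lambda_1(\Omega)\cdot e^{-1}=\Lambda_{\max}.
\]
The squeeze theorem then yields $\lambda_{\max,p}^{1/p}/p\to\Lambda_{\max}$. The only genuine obstacle I anticipate is making the passage to the limit in the monotone iteration fully rigorous (invoking compactness of $\{u_k\}$ in, say, $W_0^{1,p}(\Omega)$ via the uniform $L^\infty$ bound together with the standard elliptic estimates for the $p$-Laplacian); the bounds on $\lambda_{\max,p}$ and the asymptotic computations are then essentially direct.
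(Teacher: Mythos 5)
Your proposal is correct and follows essentially the same route as the paper: the bounds $\widecheck{\lambda}_p\leq\lambda_{\max,p}\leq\widehat{\lambda}_p$ from Propositions \ref{prop:exist.for.lambda.hat.finite.p} and \ref{existNOpp}, a monotone iteration with a solution $u_\mu$ at some $\mu$ near $\lambda_{\max,p}$ serving as supersolution to get minimal solutions on all of $(0,\lambda_{\max,p})$, and a squeeze of $\lambda_{\max,p}^{1/p}/p$ using Proposition \ref{yllegokawohlymandoaparar} and Lemma \ref{lema.autovalores}. Your explicit computation of the limits of $\widecheck{\lambda}_p^{1/p}/p$ and $\widehat{\lambda}_p^{1/p}/p$ just spells out what the paper cites in one line.
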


\begin{proof}
By Propositions \ref{prop:exist.for.lambda.hat.finite.p} and \ref{existNOpp}, we have that $0<\widecheck\lambda_p\leq\lambda_{\max,p}\leq\widehat{\lambda}_{p}<\infty$. 
 Moreover, although we do not know $\lambda_{\max,p}$  explicitly,  \eqref{lambda0p}, \eqref{lambda.hat.p}, and  \eqref{estimate.lambda.max.p}, along with Proposition \ref{yllegokawohlymandoaparar} and  Lemma \ref{lema.autovalores}
 provide its asymptotic behavior, namely,
\[
 \lim_{p\to\infty}\frac{\lambda_{\max,p}^{1/p}}{p}=\lim_{p\to\infty}\frac{\widecheck
{\lambda}_{p}
^{1/p}}{p}=\lim_{p\to\infty}\frac{\widehat{\lambda}_{p}^{1/p}}{p}
=e^{-1}\Lambda_{1}(\Omega)=
\Lambda_{\max}.
\]

Let us now complete the construction of the branch of minimal solutions. Since $\lambda_{\max,p}<\infty$
we can take  $\mu$ arbitrarily close to $\lambda_{\text{max},p}$  and  $u_{\mu}$ solution of
\begin{equation*}
\left\{\begin{split}
-&\Delta_{p}u_{\mu}=\mu\,e^{u_{\mu}}&&\text{in}\ \Omega,\\
&u_{\mu}=0&&\text{on}\ \partial\Omega.
\end{split}
\right.
\end{equation*}
Then, for every $\lambda\in(\widecheck\lambda_p,\mu]$ we can produce a minimal solution as in Proposition  \ref{prop:exist.for.lambda.hat.finite.p}, taking $\overline{u}=u_\mu$
 in the iteration.
\end{proof}



We are now ready to prove Theorem \ref{convergencia.minimales}.

\begin{proof}[Proof of Theorem \ref{convergencia.minimales}]
\noindent1.\quad{}We have that 
\[
\left\{
\begin{split}
-&\Delta_{p}u_{\lambda_p,p}=\lambda_p\, e^{u_{\lambda_p,p}}&&\text{in}\ \Omega\\
&u_{\lambda_p,p}=0 &&\text{on}\ \partial\Omega.
\end{split}
\right.
\]
Multiplying  the equation by
$u_{\lambda_p,p}$ and integrating by parts, we get
\[
\int_{\Omega}|\nabla{}u_{\lambda_p,p}(x)|^p\,dx
=
\lambda_p
\int_{\Omega}u_{\lambda_p,p}(x)\,e^{u_{\lambda_p,p}(x)}\,dx.
\]
Let us fix $p>n$. Then, for every  $m\in(n,p)$ and every $x,y\in\Omega$, there exists a positive constant $C$ independent of $m,p$ (see \cite[Lemma 3.3]{Charro.Parini}) such that 
\begin{multline}\label{morrey.main.problem}
\frac{|u_{\lambda_p,p}(x)-u_{\lambda_p,p}(y)|}{|x-y|^{1-\frac{n}{m}}}
\leq
{C}\left(\int_{\Omega}\left|\nabla{}u_{\lambda_p,p}\right|^mdx\right)^{1/m}\leq
{C}\,|\Omega|^{\frac1m-\frac1p}
\left(\int_{\Omega}\left|\nabla{}u_{\lambda_p,p}\right|^pdx\right)^{1/p}
\\
=
\displaystyle{{C}\,|\Omega|^{\frac1m-\frac1p}
\left(
\lambda_p
\int_{\Omega}u_{\lambda_p,p}\,e^{u_{\lambda_p,p}}\,dx
\right)^{1/p}}
\leq
{C}\,
\Big(
\lambda_p\|u_{\lambda_p,p}\|_\infty \,e^{\|u_{\lambda_p,p}\|_\infty}
\Big)^{1/p}.
\end{multline}
Let us now find estimates for $\|u_{\lambda_p,p}\|_\infty$.

\medskip

\noindent2.\quad{}Consider $\widecheck{\lambda}_{p}$, given by 
\eqref{lambda0p}. Since
\[
\lim_{p\to\infty}
\frac{\widecheck{\lambda}_{p}^{1/p}}{p}
=
\Lambda_{\max}>\Lambda
=
\lim_{p\to\infty}\frac{\lambda_{p}^{1/p}}{p},
\]
   there exists $p_0$ such that $\lambda_p<\widecheck{\lambda}_{p}$ for all  $p\geq p_0$.
Then, by estimate ~\eqref{bounds.for.m.lambda.p}, we have
\[
 \frac{\lambda_p^\frac{1}{p-1}}{p}  v_{p}(x)
 \leq
\frac{u_{\lambda_p,p}(x)}{p}
 \leq
\frac{\lambda_p^\frac{1}{p-1}}{p}\, e\, v_{p}(x)\quad\textrm{in}\ \overline\Omega,
\]
where $v_{p}$ is given by \eqref{aux.kawohl2}. 
Take $\epsilon>0$ such that $(1+\epsilon)\Lambda<\Lambda_{\max}$.
By Proposition \ref{yllegokawohlymandoaparar},  we know that $v_{p}\to\text{dist}(\cdot,\partial\Omega)$ uniformly as $p\to\infty$  and we deduce that 
\begin{equation}\label{estimate.in.the.limit}
\frac{\|u_{\lambda_{p},p}\|_\infty}{p}\leq  (1+\epsilon)\Lambda e\|\text{dist}(\cdot,\partial\Omega)\|_\infty
=(1+\epsilon)\Lambda\Lambda_{\max}^{-1}<1
\end{equation}
for  $p$ large enough.
Then, from \eqref{morrey.main.problem} and  the  Arzel\`a-Ascoli theorem, we find that there exists a subsequence  $p'$ and a limit function   $u_\Lambda$ such that 
\[
\frac{u_{\lambda_{p'},p'}}{p'}\to u_\Lambda,\quad\text{uniformly as}\ p'\to\infty.
\]

\medskip

\noindent3.\quad{}By Proposition \ref{proposition.limit.eq}, we have that $u_\Lambda$ is a  viscosity solution of the limit problem
\eqref{eq:limite.existence.minimal}.
Additionally,  from estimate \eqref{estimate.in.the.limit} we deduce
$\|u_\Lambda\|_\infty\leq \Lambda\Lambda_{\max}^{-1}<1$, and then
 Theorem ~\ref{theorem:exist.for.Lambda.max} implies that $u_\Lambda$ must be the minimal solution of the limit problem \eqref{eq:limite.existence.minimal}.
Therefore, the whole sequence $u_{\lambda_p,p}$ converges, and not only a subsequence, which concludes the proof.
 \end{proof}

\medskip


\section{Multiplicity results in special domains}\label{section.explicit}

This section proves that, under certain geometric assumptions on the domain $\Omega$, it is possible to compute an explicit curve of solutions. Moreover, we establish a further non-existence result with the aid of this curve of solutions. 
  To this aim, we consider the \emph{ridge set} of $\Omega$,
\begin{eqnarray*}
\mathcal{R}\!\!\!&=&\!\!\!\{x\in\Omega:
\textnormal{dist}(x,\partial\Omega)\
\text{is not differentiable at } x\}\\
\!\!\!&=&\!\!\!\{x\in\Omega:\ \exists\,x_1,x_2\in\partial\Omega,\
x_1\neq{}x_2,\ \text{s.t.}\
|x-x_1|=|x-x_2|=\textnormal{dist}(x,\partial\Omega)\}
\end{eqnarray*}
and its subset $\mathcal{M}$, the \emph{set of maximal distance} to the boundary,
\[
\mathcal{M}=\big\{x\in\Omega:\textnormal{dist}(x,\partial\Omega)=\|\textnormal{dist}(\cdot,\partial\Omega)\|_\infty\big\}.
\]

We have proved in Theorem \ref{theorem:exist.for.Lambda.max} the existence of minimal solutions for the limit problem  \eqref{limite1}, as well as several non-existence results in Propositions  \ref{noexist.small} and \ref{prop:NO.exist.beyond.Lambda.hat}.
These results hold for general bounded domains $\Omega$. 
In this section, we find a second solution to the limit problem \eqref{limite1} under the additional assumption $\mathcal{M}\equiv\mathcal{R}$. Furthermore, both solutions lie on an explicit curve of solutions. Some examples of domains satisfying $\mathcal{M}\equiv\mathcal{R}$ are the ball, the annulus, and the stadium (convex hull of two balls of the same radius). A square or an ellipse does not verify the condition.





\begin{figure}[h!]
\begin{center}
\resizebox{0.4\textwidth}{!}{%
\includegraphics{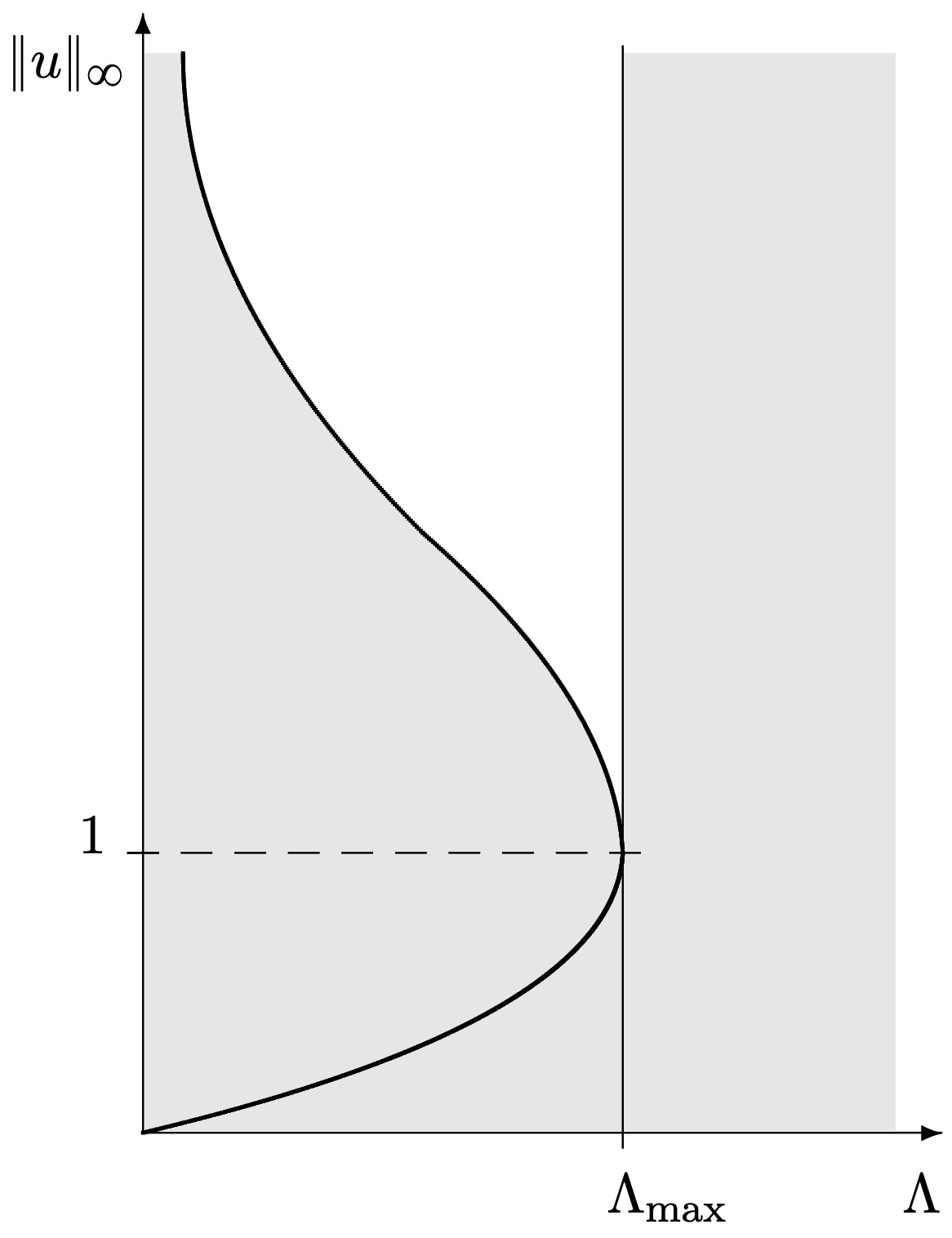} }
\caption{Curve of explicit solutions $\Lambda_1(\Omega)\,\|u_\Lambda\|_\infty-\Lambda\,e^{\|u_\Lambda\|_\infty}=0$ in Theorem \ref{conecone} and
regions of non-existence derived from Proposition \ref{prop:NO.exist.beyond.Lambda.hat}, Theorem \ref{asereje},  and the uniqueness result in Theorem \ref{theorem:exist.for.Lambda.max}.
}
\end{center}
\end{figure}


\subsection{A curve of explicit solutions}
 We have the following result.

\begin{theorem}\label{conecone}
Let $\Lambda>0$ and $\Lambda_{\max}$ given by \eqref{defn.lambda.hat}. Assume that $\Omega\subset\mathbb{R}^n$ is a bounded domain that satisfies
$\mathcal{M}\equiv\mathcal{R}$. Let us consider solutions of the form 
\begin{equation}\label{cono.bis2}
u(x)=\alpha\cdot\textnormal{dist}(x,\partial\Omega),\qquad \alpha>0
\end{equation}
for the problem
\begin{equation}\label{problema.exponencial.infty}
\left\{
\begin{aligned}
&\min\big\{|\nabla u(x)|-\Lambda\,e^{u(x)},
-\Delta_{\infty}u(x)\big\}=0&&\textrm{in}\ \Om,\\
&u=0&& \text{on}\ \partial\Omega.
\end{aligned}
\right.
\end{equation}
Then, problem \eqref{problema.exponencial.infty}
\begin{itemize}
 \item[$i)$]Has two  solutions of the form \eqref{cono.bis2} if $0<\Lambda<\Lambda_{\max}$, corresponding to the two roots of
\begin{equation}\label{chunga.bis2}
\alpha-\Lambda\,e^{\alpha\,\|\textnormal{dist}(\cdot,\partial\Omega)\|_\infty} =0.
\end{equation}
 \item[$ii)$] Has one  solution of the form \eqref{cono.bis2} for $\Lambda=\Lambda_{\max}$, with $\alpha=\|\textnormal{dist}(\cdot,\partial\Omega)\|_\infty^{-1}$.
 \item[$iii)$] Has no  solutions for $\Lambda>\Lambda_{\max}$, and only the trivial solution for $\Lambda=0$.
\end{itemize}
\end{theorem}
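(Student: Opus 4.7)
My plan is to substitute the ansatz $u(x)=\alpha\,d(x)$, with $d(x):=\mathrm{dist}(x,\partial\Omega)$ and $\alpha>0$, into \eqref{problema.exponencial.infty} and reduce the PDE problem to the scalar equation \eqref{chunga.bis2}. By Proposition~\ref{yllegokawohlymandoaparar}, $d$ is the unique viscosity solution of $\min\{|\nabla v|-1,-\Delta_\infty v\}=0$ in $\Omega$ with zero boundary data, and by the positive $1$- and $3$-homogeneity of the two operators, $u=\alpha d$ solves $\min\{|\nabla u|-\alpha,-\Delta_\infty u\}=0$ in the viscosity sense. The target equation differs only in that the constant $\alpha$ is replaced by $\Lambda e^{u(x)}$, which is bounded above by $\Lambda e^{\alpha\|d\|_\infty}$ on $\overline\Omega$, with equality precisely on $\mathcal M$.

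\textbf{Regular set.} On $\Omega\setminus\mathcal R$ the distance $d$ is $C^1$ with $|\nabla d|=1$ and is $\infty$-harmonic in the viscosity sense (along each minimizing ray from the unique nearest boundary projection $d$ is affine, so $\Delta_\infty d=0$). Hence any $C^2$ function $\phi$ touching $u$ from above at $x_0\in\Omega\setminus\mathcal R$ satisfies $-\Delta_\infty\phi(x_0)\leq 0$, making the new subsolution condition trivial; and for $\phi$ touching from below, the supersolution property of $\alpha d$ yields $|\nabla\phi(x_0)|\geq\alpha$ and $-\Delta_\infty\phi(x_0)\geq 0$, so the new supersolution inequality reduces to $\alpha\geq\Lambda e^{\alpha d(x_0)}$. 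Letting $d(x_0)\uparrow\|d\|_\infty$ forces the necessary condition $\alpha\geq\Lambda e^{\alpha\|d\|_\infty}$.

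\textbf{Ridge.} This is where the hypothesis $\mathcal M\equiv\mathcal R$ is essential. At $x_0\in\mathcal R=\mathcal M$ there exist at least two distinct nearest projections $y_1,\dots,y_k\in\partial\Omega$, and a direct Taylor expansion of $|x-y_i|$ (bounding $d(x)\leq|x-y_i|$ and taking the minimum in $i$) yields
\[
u(x)\leq u(x_0)+\alpha\min_{1\leq i\leq k}v_i\cdot(x-x_0)+O(|x-x_0|^2),\qquad v_i:=\tfrac{x_0-y_i}{d(x_0)}.
\]
For any $C^2$ function $\phi\leq u$ with $\phi(x_0)=u(x_0)$, letting $w=\nabla\phi(x_0)$, a first-order comparison forces $w\cdot e\leq\alpha\, v_j\cdot e$ for every unit $e$ and every $j$, which requires $w=\alpha v_j$ simultaneously for every $j$—impossible when $k\geq 2$. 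Hence no $C^2$ test function touches $u$ from below at $x_0$ and the supersolution condition is vacuously satisfied. For $\phi\geq u$ with $\phi(x_0)=u(x_0)$, the analogous comparison gives $\nabla\phi(x_0)\in\alpha\,\mathrm{conv}\{v_1,\dots,v_k\}$, so $|\nabla\phi(x_0)|\leq\alpha$; since $u(x_0)=\alpha\|d\|_\infty$, the bound $\alpha\leq\Lambda e^{\alpha\|d\|_\infty}$ yields $|\nabla\phi(x_0)|-\Lambda e^{u(x_0)}\leq 0$, verifying the subsolution condition. Combined with the regular-set step, $u=\alpha d$ solves \eqref{problema.exponencial.infty} if and only if $\alpha=\Lambda e^{\alpha\|d\|_\infty}$, which is \eqref{chunga.bis2}.

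\textbf{Counting roots and the main obstacle.} Setting $g(\alpha):=\alpha e^{-\alpha\|d\|_\infty}$, elementary calculus gives $g(0)=0$, $g(\alpha)\to 0$ as $\alpha\to\infty$, and a unique maximum at $\alpha^{*}=\|d\|_\infty^{-1}=\Lambda_1(\Omega)$ of value $\Lambda_1(\Omega)/e=\Lambda_{\max}$. Hence $g(\alpha)=\Lambda$ has two positive roots when $0<\Lambda<\Lambda_{\max}$, the single root $\alpha=\Lambda_1(\Omega)$ when $\Lambda=\Lambda_{\max}$, no positive root when $\Lambda>\Lambda_{\max}$, and only $\alpha=0$ when $\Lambda=0$, giving (i)--(iii). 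The main obstacle is the viscosity analysis at ridge points: both the vacuity of the supersolution condition and the sharp gradient bound $|\nabla\phi|\leq\alpha$ for test functions from above hinge on the multiplicity of nearest projections, and the hypothesis $\mathcal M\equiv\mathcal R$ is decisive because it guarantees that every non-smooth point of $u$ is a maximum, so that $u(x_0)=\alpha\|d\|_\infty$ matches the exponent appearing in $\Lambda e^{u(x_0)}$.
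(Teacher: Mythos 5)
Your proposal is correct and follows essentially the same route as the paper: you check the cone ansatz separately on $\Omega\setminus\mathcal{R}$ (where $\alpha\,\textnormal{dist}(\cdot,\partial\Omega)$ is $\infty$-harmonic and the supersolution condition forces $\alpha\geq\Lambda\,e^{\alpha\|\textnormal{dist}(\cdot,\partial\Omega)\|_\infty}$) and on the ridge $\mathcal{R}\equiv\mathcal{M}$ (no $C^2$ touching from below, and $|\nabla\phi(x_0)|\leq\alpha$ for test functions from above gives the reverse inequality at the maximum height), reducing the problem to the scalar equation \eqref{chunga.bis2}, whose roots you count by the same elementary calculus. The only differences are cosmetic: you justify the ridge facts directly from the multiplicity of nearest boundary projections, whereas the paper cites \cite[Lemma 6.10]{JuutinenTESIS} and simply asserts that cones cannot be touched from below there, and it verifies $\infty$-harmonicity off the ridge by a Taylor/ball-means computation rather than affinity along minimizing rays.
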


\begin{remark}
By Theorem \ref{theorem:exist.for.Lambda.max}, 
for $0<\Lambda<\Lambda_{\max}$ the solution of the form \eqref{cono.bis2} with  smallest $\alpha$ is the minimal solution of \eqref{problema.exponencial.infty}.
\end{remark}

\begin{proof}
First of all, we are going to check that
\[
-\Delta_{\infty}u(x)=0\qquad\text{in}\ \Omega\setminus\mathcal{R}
\]
in the viscosity sense. Let $\phi\in{C}^2$ and $x_0\in\Omega\setminus\mathcal{R}$ such that $u-\phi$ has a local maximum at $x_0$. We can assume $u(x_0)=\phi(x_0)$ and $\nabla\phi(x_0)\neq0$.
A Taylor expansion, and the fact that $\phi$ touches $u$ from above at $x_0$ yield
\[
-\frac{\Delta_\infty\phi(x_0)}{|\nabla \phi(x_0)|^2}+o(1)\leq\frac{1}{\epsilon^2}\left(2u(x_0)-\max_{y\in B_\epsilon(x_0)}u(y)-\min_{y\in B_\epsilon(x_0)}u(y)\right)
\]
as $\epsilon\to0$.  From \eqref{cono.bis2} we have that
\[
\max_{y\in B_\epsilon(x_0)}u(y)=u(x_0)+\alpha\epsilon,\qquad\min_{y\in B_\epsilon(x_0)}u(y)=u(x_0)-\alpha\epsilon
\]
and we deduce that $u$ is $\infty$-subharmonic in $\Omega\setminus\mathcal{R}$. The proof that it is also $\infty$-superharmonic is analogous.
 Hence, we need make sure that
\[
|\nabla u(x)|-\Lambda\,e^{u(x)}\geq 0\qquad\forall x\in\Omega\setminus\mathcal{R}.
\]
Indeed, we find that
\[
|\nabla u(x)|-\Lambda\,e^{u(x)}=\alpha-\Lambda\,e^{\alpha\cdot\textnormal{dist}(x,\partial\Omega)}
\]
(recall that $x\notin\mathcal{R}$ and the derivatives are classical).
Since we can choose points $x\notin\mathcal{R}\equiv\mathcal{M}$ arbitrarily close to $\mathcal{M}$, we find the necessary condition 
\begin{equation}\label{cucu.bis2}
\alpha-\Lambda\,e^{\alpha\cdot\|\textnormal{dist}(\cdot,\partial\Omega)\|_\infty}\geq0.
\end{equation}

\medskip

Next, we turn our attention to the ridge set $\mathcal{R}$. First, observe that cones as in  
\eqref{cono.bis2} are always supersolutions of \eqref{problema.exponencial.infty} in the ridge set, since they  cannot be touched from below with  ${C}^2$ functions  at those points. Hence, we only have to consider the subsolution case. So, let $x_0\in\mathcal{R}$ and $\phi\in{C}^2$ such that $u-\phi$ has a local maximum point at $x_0$. We aim to prove that
\begin{equation}\label{elgol.bis2}
\min\big\{|\nabla \phi(x_0)|-\Lambda\,e^{u(x_0)},-\Delta_{\infty}\phi(x_0)\big\}\leq0.
\end{equation}

It is well-known (see for instance \cite[Lemma 6.10]{JuutinenTESIS}) that
\[
\min\Big\{|\nabla{}u(x)|-\alpha,
-\Delta_{\infty}u(x)\Big\}=0
\]
in the viscosity sense.
Thus, by definition of viscosity subsolution we have that either $|\nabla{}\phi(x_0)|\leq \alpha$ or $-\Delta_{\infty}\phi(x_0)\leq0$. In the latter case, \eqref{elgol.bis2} holds and there is nothing to prove. Thus, we can assume in the sequel that $-\Delta_\infty\phi(x_0)>0$ and $|\nabla{}\phi(x_0)|\leq \alpha$. Then, since $x_0\in\mathcal{R}\equiv\mathcal{M}$, we have $u(x_0)=\alpha\,\|\textnormal{dist}(\cdot,\partial\Omega)\|_\infty$ and
\[
|\nabla \phi(x_0)|-\Lambda\,e^{u(x_0)}\leq \alpha-\Lambda\,e^{\alpha\,\|\textnormal{dist}(\cdot,\partial\Omega)\|_\infty}.
\]
Recalling \eqref{cucu.bis2}, we discover that the only possibility is that 
\eqref{chunga.bis2} holds.
The rest of the proof is devoted to study the number of positive solutions of  equation \eqref{chunga.bis2}. 

Consider $\Phi_\Lambda(\alpha)=\Lambda\,e^{\alpha\|\textnormal{dist}(\cdot,\partial\Omega)\|_\infty}-\alpha$. It is elementary to show that $\Phi_\Lambda$ is convex and has a global minimum at
\[
\alpha_{\text{min}}=-\|\textnormal{dist}(\cdot,\partial\Omega)\|_\infty^{-1}\,\log\left(\Lambda\|\textnormal{dist}(\cdot,\partial\Omega)\|_\infty\right).
\]
This minimum value is
\[
\min_{\alpha\in\R}\Phi_\Lambda(\alpha)=\Phi_\Lambda(\alpha_\text{min})=\|\textnormal{dist}(\cdot,\partial\Omega)\|_\infty^{-1}
\big(
1+\log\left(\Lambda\|\textnormal{dist}(\cdot,\partial\Omega)\|_\infty\right)
\big).
\]
Whenever this minimum is strictly positive, equation \eqref{chunga.bis2} has no solution. This happens when $\Lambda>\Lambda_{\max}$ (in fact, Proposition \ref{prop:NO.exist.beyond.Lambda.hat} gives a stronger result in this case). Furthermore, notice that if $\Lambda=0$, then necessarily $\alpha=0$. These facts amount to ~$(iii)$. When the minimum equals  0, that is, when $\Lambda=\Lambda_{\max}$, then there exists a unique solution with $\alpha=\|\textnormal{dist}(\cdot,\partial\Omega)\|_\infty^{-1}$. This is part $(ii)$. And finally, for part $(i)$, notice that when the minimum is strictly negative ($0<\Lambda<\Lambda_{\max}$), equation \eqref{chunga.bis2} has two roots.
\end{proof}


\begin{remark}
Theorem \ref{conecone} yields the following implicit curve of cone solutions
\[
\Lambda_1(\Omega)\,\|u_\Lambda\|_\infty-\Lambda\,e^{\|u_\Lambda\|_\infty}=0,
\]
where $\Lambda_1(\Omega)=\|\textnormal{dist}(\cdot,\partial\Omega)\|_\infty^{-1}$ is the first $\infty$-eigenvalue, see \cite{Juutinen-Lindqvist-Manfredi}.
The same curve was deduced heuristically by Lions in the context of the Gelfand problem for the Laplacian in \cite[p. 465, item (h) and Remark 2.4]{Lions}. Unfortunately, Lions uses this example to caution against the heuristic reasoning since the bifurcation diagram is  of corkscrew-type for dimensions  $3\leq n \leq 9$.  One could wonder why we do not see a similar situation in Theorem \ref{conecone}.
However, according to \cite[Lemma 2.3]{DelPino}, the corresponding corkscrew-type diagram for the $p$-Laplacian in the radial case occurs in the range
\[
p<n<\frac{p\,(p+3)}{p-1},
\]
which cannot happen as $p\to\infty$.

\end{remark}


\subsection{Further non-existence results}
The following result shows that we can enlarge the region of nonexistence of solutions for certain domains by taking advantage of the curve of explicit solutions.

\begin{theorem}\label{asereje} 
Let  $\Omega$ be a bounded domain such that $\mathcal{M}\equiv\mathcal{R}$, and  assume $\mathcal{M}$ is \textit{Lipschitz connected}. Then, for every $\Lambda>0$, the only solutions of the problem
\begin{equation}\label{ecu.teo.uniqueness}
\left\{
\begin{aligned}
&\min\big\{|\nabla u_\Lambda(x)|-\Lambda\,e^{u_\Lambda(x)},
-\Delta_{\infty}u_\Lambda(x)\big\}=0&&\textrm{in}\ \Om,\\
&u_{\Lambda}=0&& \text{on}\ \partial\Omega
\end{aligned}
\right.
\end{equation}
 satisfying
\begin{equation}\label{curve.of.sols}
\Lambda_1(\Omega)\,\|u_\Lambda\|_\infty-\Lambda\,e^{\|u_\Lambda\|_\infty}\geq0,
\end{equation}
are the explicit solutions found in Theorem \ref{conecone}, which satisfy \eqref{curve.of.sols} with an equality.
\end{theorem}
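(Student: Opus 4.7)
The strategy is to combine a monotone one-sided comparison with the curve hypothesis \eqref{curve.of.sols} in order to force $M:=\|u_\Lambda\|_\infty$ to be a root of $\Lambda_1(\Omega)\,t=\Lambda e^{t}$, and then to identify $u_\Lambda$ with the corresponding cone from Theorem \ref{conecone}. The key observation is that since $u_\Lambda$ is a viscosity subsolution of \eqref{ecu.teo.uniqueness} and $\Lambda e^{u_\Lambda(x)}\le\Lambda e^{M}$ pointwise, the monotonicity of the $\min$ operator in its zero-th order entry shows that $u_\Lambda$ is also a subsolution of the constant-right-hand-side problem
\[
\min\bigl\{|\nabla w|-\Lambda e^{M},\;-\Delta_\infty w\bigr\}=0\ \text{in}\ \Omega,\qquad w=0\ \text{on}\ \partial\Omega.
\]
By Proposition \ref{yllegokawohlymandoaparar} (and the obvious scaling), the unique viscosity solution of this problem is $v(x)=\Lambda e^{M}\,\textnormal{dist}(x,\partial\Omega)$, so Lemma \ref{comparison.f(x)} yields the pointwise bound $u_\Lambda(x)\le\Lambda e^{M}\,\textnormal{dist}(x,\partial\Omega)$ in $\Omega$.

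Evaluating this bound at a maximum point $x_0\in\Omega$ of $u_\Lambda$ gives $M\le\Lambda e^{M}\|\textnormal{dist}(\cdot,\partial\Omega)\|_\infty=\Lambda e^{M}/\Lambda_1(\Omega)$, i.e.\ $\Lambda_1(\Omega)\,M\le\Lambda e^{M}$, which together with \eqref{curve.of.sols} forces equality $\Lambda_1(\Omega)\,M=\Lambda e^{M}$. Hence $M\in\{\beta_-,\beta_+\}$, where $\beta_\pm=\alpha_\pm\|\textnormal{dist}(\cdot,\partial\Omega)\|_\infty$ and $\alpha_\pm$ are the two roots from Theorem \ref{conecone}; the same chain of equalities also forces $\textnormal{dist}(x_0,\partial\Omega)=\|\textnormal{dist}(\cdot,\partial\Omega)\|_\infty$, so the maximum is attained on $\mathcal{M}=\mathcal{R}$. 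Writing $\alpha^{\#}:=\Lambda e^{M}\in\{\alpha_-,\alpha_+\}$, the bound then sharpens to $u_\Lambda\le\alpha^{\#}\,\textnormal{dist}(\cdot,\partial\Omega)$ in $\Omega$.

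When $M=\beta_-$, the cone $\alpha_-\,\textnormal{dist}(\cdot,\partial\Omega)$ from Theorem \ref{theorem:exist.for.Lambda.max} and the Remark after Theorem \ref{conecone} has $L^\infty$-norm $\beta_-<1$, so Corollary \ref{corolario.unicidad.limit.concavo} gives $u_\Lambda=\alpha_-\,\textnormal{dist}(\cdot,\partial\Omega)$ at once. For the remaining case $M=\beta_+$, one combines the minimality bound $u_\Lambda\ge\alpha_-\,\textnormal{dist}(\cdot,\partial\Omega)$ with the upper bound $u_\Lambda\le\alpha_+\,\textnormal{dist}(\cdot,\partial\Omega)$ and analyzes $u_\Lambda$ along the gradient lines of the distance function. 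Because $\mathcal{R}\equiv\mathcal{M}$, every such line is a straight segment of length $\|\textnormal{dist}(\cdot,\partial\Omega)\|_\infty$ joining $\partial\Omega$ to $\mathcal{M}$, and on such a line parameterized by arc length $t$, the $\infty$-superharmonic Lipschitz estimate $|\nabla u_\Lambda|\le u_\Lambda/\textnormal{dist}(\cdot,\partial\Omega)$ a.e.\ (already invoked in Section \ref{existence.minimal.limit.problem}) yields $|f'(t)|\le f(t)/t$ for $f(t)=u_\Lambda(\gamma(t))$, so $t\mapsto f(t)/t$ is non-increasing. Combined with $f(t)\le\alpha_+\,t$, this forces $f(t)\equiv\alpha_+\,t$, i.e., $u_\Lambda=\alpha_+\,\textnormal{dist}(\cdot,\partial\Omega)$, along every gradient line whose ridge endpoint $x\in\mathcal{M}$ satisfies $u_\Lambda(x)=\beta_+$.

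It therefore remains to show $u_\Lambda\equiv\beta_+$ on the entire ridge $\mathcal{M}$, and this is where the Lipschitz-connectedness of $\mathcal{M}$ and the identification $\mathcal{M}=\mathcal{R}$ enter essentially. The set $A:=\{x\in\mathcal{M}:u_\Lambda(x)=\beta_+\}$ is non-empty by the first paragraph and closed in $\mathcal{M}$ by continuity, so it suffices to show that $A$ is relatively open in $\mathcal{M}$; I would do this by a local argument near any $x_1\in A$ that combines the comparison-with-cones-from-below for $\infty$-superharmonic functions (producing a cone of slope $\alpha_+$ below $u_\Lambda$ centered at $x_1$) with the universal upper bound $u_\Lambda\le\alpha_+\,\textnormal{dist}(\cdot,\partial\Omega)$ and with the continuity of the foliation by gradient lines in $\Omega\setminus\mathcal{R}$ near a ridge point. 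Lipschitz-connectedness then gives $A=\mathcal{M}$, and the gradient-line argument of the previous paragraph yields $u_\Lambda=\alpha_+\,\textnormal{dist}(\cdot,\partial\Omega)$ on $\Omega\setminus\mathcal{R}$, and hence on $\Omega$ by continuity. I expect the main obstacle to be precisely this openness of $A$: propagating the saturation $u_\Lambda=\alpha_+\,d$ from a single ridge point to its $\mathcal{M}$-neighbors requires a delicate interplay between the viscosity structure of the equation and the geometry $\mathcal{M}=\mathcal{R}$, and is the only step where the hypothesis that $\mathcal{M}$ be Lipschitz connected is indispensable.
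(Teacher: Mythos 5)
Your first two steps are sound and even a bit slicker than the paper's Lemma \ref{trururu}: comparing $u_\Lambda$ (as a subsolution of the constant-right-hand-side problem with $f\equiv\Lambda e^{M}$, $M=\|u_\Lambda\|_\infty$) against $\Lambda e^{M}\,\textnormal{dist}(\cdot,\partial\Omega)$ via Lemma \ref{comparison.f(x)} and Proposition \ref{yllegokawohlymandoaparar}, and then playing the resulting bound off against \eqref{curve.of.sols}, correctly forces $\Lambda_1(\Omega)M=\Lambda e^{M}$ and the upper bound $u_\Lambda\le (M/\|\textnormal{dist}(\cdot,\partial\Omega)\|_\infty)\,\textnormal{dist}(\cdot,\partial\Omega)$, which is exactly the content of Lemma \ref{trururu}. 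The case $M=\beta_-<1$ via Corollary \ref{corolario.unicidad.limit.concavo} is also fine (though note it silently excludes $\Lambda=\Lambda_{\max}$, where $\beta_-=\beta_+=1$ and the corollary's strict bound $\|u\|_\infty<1$ fails, so that case too lands in your unfinished branch).

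The genuine gap is the one you yourself flag: in the case $M=\beta_+$ you need $u_\Lambda\equiv\|u_\Lambda\|_\infty$ on \emph{all} of $\mathcal{M}$, and your open-and-closed argument for $A=\{x\in\mathcal{M}:u_\Lambda(x)=\beta_+\}$ is only a sketch whose key step (relative openness of $A$) you do not know how to carry out. This is precisely the point the paper settles by quoting Yu's result, stated as Lemma \ref{lemma.yu}: if $\mathcal{M}$ is Lipschitz connected, then for \emph{any} $\infty$-superharmonic function the set of maximum points is exactly $\mathcal{M}$. In effect you are trying to reprove that theorem from scratch, and without it your argument only identifies $u_\Lambda$ with the cone along the transport rays ending at the (possibly very small) set of ridge points where the maximum happens to be attained. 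Once Lemma \ref{lemma.yu} is available, the paper's finish is also cleaner than your ray-by-ray ODE argument (which, as written, has the additional technical wrinkle that the a.e.\ bound $|\nabla u_\Lambda|\le u_\Lambda/\textnormal{dist}$ need not restrict to a.e.\ point of a fixed segment): one simply observes that $u_\Lambda$ is a supersolution of the Dirichlet problem for $-\Delta_\infty$ in $\Omega\setminus\mathcal{M}$ with data $\|u_\Lambda\|_\infty$ on $\mathcal{M}$ and $0$ on $\partial\Omega$, whose unique solution (Jensen) is the cone, so $u_\Lambda\ge$ cone; combined with the upper bound from your step one (or Lemma \ref{trururu}) this gives $u_\Lambda\equiv$ cone of the form \eqref{cono.bis2}, and Theorem \ref{conecone} classifies it. So: correct opening, but the heart of the theorem — propagating the maximum over the whole ridge — is missing and must be supplied by Lemma \ref{lemma.yu} or an equivalent argument.
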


The idea of  the proof of Theorem \ref{asereje}   is to show that any solution $u_\Lambda$ satisfying \eqref{curve.of.sols} must necessarily be a cone and therefore belong to the curve of solutions given by Theorem \ref{conecone}.
First, we  show that solutions of \eqref{ecu.teo.uniqueness} that satisfy \eqref{curve.of.sols} must lie below a cone with their same height.

\begin{lemma}\label{trururu}
Let $\Omega$ be a bounded domain and $u_{\Lambda}$ be a viscosity solution of \eqref{ecu.teo.uniqueness} satisfying \eqref{curve.of.sols}. Then, 
\[
u_{\Lambda}
\leq
\frac{\|u_\Lambda\|_\infty}{\|\textnormal{dist}(\cdot,\partial\Omega)\|_\infty}\,\textnormal{dist}(\cdot,\partial\Omega)\quad\textrm{in}\ \Omega.
\]

\end{lemma}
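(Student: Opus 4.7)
Write $M=\|u_\Lambda\|_\infty$ and $D=\|\textnormal{dist}(\cdot,\partial\Omega)\|_\infty$, so that $\Lambda_1(\Omega)=D^{-1}$. Condition \eqref{curve.of.sols} rewrites as
\[
\Lambda\,e^{M}\leq \Lambda_1(\Omega)\,M=\frac{M}{D}.
\]
The plan is to compare $u_\Lambda$ with the cone
\[
w(x)=\frac{M}{D}\,\textnormal{dist}(x,\partial\Omega),
\]
which, by rescaling the characterization in Proposition \ref{yllegokawohlymandoaparar}, is the unique viscosity solution of
\[
\min\{|\nabla w|-M/D,\ -\Delta_\infty w\}=0\quad\text{in}\ \Omega,\qquad w=0\ \text{on}\ \partial\Omega.
\]

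First I claim that $u_\Lambda$ is a viscosity subsolution of the same equation. Let $\phi\in C^2$ touch $u_\Lambda$ from above at $x_0\in\Omega$. Since $u_\Lambda$ is a viscosity subsolution of \eqref{ecu.teo.uniqueness},
\[
\min\big\{|\nabla\phi(x_0)|-\Lambda\,e^{u_\Lambda(x_0)},\ -\Delta_\infty\phi(x_0)\big\}\leq0.
\]
If $-\Delta_\infty\phi(x_0)\leq 0$ we are done. Otherwise $|\nabla\phi(x_0)|\leq\Lambda\,e^{u_\Lambda(x_0)}$, and since $u_\Lambda(x_0)\leq M$, the rewritten form of \eqref{curve.of.sols} gives
\[
|\nabla\phi(x_0)|\leq\Lambda\,e^{M}\leq \frac{M}{D},
\]
so in either case $\min\{|\nabla\phi(x_0)|-M/D,\,-\Delta_\infty\phi(x_0)\}\leq 0$, proving the claim.

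Finally, both $u_\Lambda$ and $w$ vanish on $\partial\Omega$, and $w$ is a viscosity supersolution of $\min\{|\nabla v|-M/D,\,-\Delta_\infty v\}=0$. Applying the comparison principle of Lemma \ref{comparison.f(x)} with the positive constant right-hand side $f\equiv M/D$ yields $u_\Lambda\leq w$ in $\Omega$, as desired. The only subtle step is the case analysis on $\phi$ in the subsolution argument, which hinges crucially on using the \emph{global} bound $u_\Lambda(x_0)\leq M$ together with the hypothesis \eqref{curve.of.sols}; everything else is a direct comparison with the cone.
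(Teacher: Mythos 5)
Your proof is correct and follows essentially the same route as the paper: you show that \eqref{curve.of.sols} (together with $u_\Lambda(x_0)\leq\|u_\Lambda\|_\infty$) makes $u_\Lambda$ a viscosity subsolution of $\min\{|\nabla w|-\Lambda_1(\Omega)\|u_\Lambda\|_\infty,-\Delta_\infty w\}=0$, and then compare with the cone via Lemma \ref{comparison.f(x)}. This is exactly the argument in the paper, with the same case analysis at the touching point.
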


\begin{proof}
It is enough to prove that  
\begin{equation}\label{83746}
\min\left\{|\nabla{}u_\Lambda(x)|-\Lambda_1(\Omega)\,\|u_\Lambda\|_\infty,
-\Delta_{\infty}u_\Lambda(x)\right\}\leq0\quad\text{in}\ \Om
\end{equation}
in the viscosity sense. Then one gets $u_\Lambda(x)\leq
\|u_\Lambda\|_\infty\,\|\textnormal{dist}(\cdot,\partial\Omega)\|_\infty^{-1}\,\textnormal{dist}(x,\partial\Omega)$ in $\Omega$ by comparison (Lemma \ref{comparison.f(x)}), and the result follows.

 To prove \eqref{83746}, let  $\phi\in{C}^2$ such that $u_{\Lambda}-\phi$ has a maximum
at $x_0\in\Omega$. As $u_{\Lambda}$ is a viscosity solution of \eqref{ecu.teo.uniqueness}, it satisfies
\[
\min\left\{|\nabla{}\phi(x_0)|-\Lambda\,e^{u_{\Lambda}(x_0)},
-\Delta_{\infty}\phi(x_0)\right\}\leq0\quad\textrm{in}\ \Om.
\]
If $-\Delta_\infty \phi(x_0) \leq 0$ we are done, so assume $-\Delta_{\infty}\phi(x_0)>0$ and $|\nabla{}\phi(x_0)|-\Lambda\,e^{u_{\Lambda}(x_0)}\leq0$. Using \eqref{curve.of.sols}, we have
\[
|\nabla{}\phi(x_0)|-\Lambda_1(\Omega)\,\|u_\Lambda\|_\infty\leq{}
\Lambda\,e^{u_{\Lambda}(x_0)}-\Lambda_1(\Omega)\,\|u_\Lambda\|_\infty\leq0,
\]
 and then
\[
\min\left\{|\nabla{}\phi(x_0)|-\Lambda_1(\Omega)\,\|u_\Lambda\|_\infty,
-\Delta_{\infty}\phi(x_0)\right\}\leq0\quad\textrm{in}\ \Om
\]
as desired.
\end{proof}

\begin{remark}
Lemma \ref{trururu} holds for any bounded domain
$\Omega$ without the assumption $\mathcal{M}\equiv\mathcal{R}$.
\end{remark}

Next, we recall the following result from \cite[Theorem 2.4, (i)]{Yu}, which is a crucial point in the proof of Theorem \ref{asereje}.

\begin{lemma}\label{lemma.yu}
Let $\Omega$ be a bounded domain such that $\mathcal{M}$ is Lipschitz connected. If $u$ is $\infty$-superharmonic (see \cite{lindqvist-manfredi1,lindqvist-manfredi2}) then,
\[
\big\{x\in\Omega:\
u(x)=\|u\|_{L^\infty(\Omega)}\big\}\equiv\mathcal{M}.
\]
\end{lemma}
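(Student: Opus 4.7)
The plan is to establish the two set inclusions $\{u=M\}\subseteq\mathcal{M}$ and $\mathcal{M}\subseteq\{u=M\}$, where $M:=\|u\|_{L^\infty(\Omega)}$, by exploiting the cone comparison from below enjoyed by $\infty$-superharmonic functions together with the Lipschitz estimate
\[
|\nabla u(x)|\leq\frac{u(x)}{\textnormal{dist}(x,\partial\Omega)}\qquad\text{a.e.\ in }\Omega
\]
already invoked in Proposition \ref{prop:NO.exist.beyond.Lambda.hat}. The working setting, consistent with the application of the lemma within the proof of Theorem \ref{asereje}, is $u\in C(\overline\Omega)$ with $u\geq 0$ in $\Omega$ and $u=0$ on $\partial\Omega$, so that both tools are available and $M>0$ is attained.

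For the easier inclusion $\{u=M\}\subseteq\mathcal{M}$, I would fix $x_0$ with $u(x_0)=M$ and set $r_0=\textnormal{dist}(x_0,\partial\Omega)$. Since $\partial B_{r_0}(x_0)$ touches $\partial\Omega$ (where $u=0$), cone comparison from below yields
\[
u(y)\geq M\Bigl(1-\tfrac{|y-x_0|}{r_0}\Bigr)\qquad\text{in }B_{r_0}(x_0).
\]
To conclude $r_0=R:=\|\textnormal{dist}(\cdot,\partial\Omega)\|_\infty$, I would argue by contradiction: if $r_0<R$, one picks a point $z\in\mathcal{M}$ together with a Lipschitz path in $\Omega$ along which $\textnormal{dist}(\cdot,\partial\Omega)$ is strictly increasing, and bootstraps the above cone estimate at points beyond $\partial B_{r_0}(x_0)$ using the monotonicity-in-scale of the downward slope of $\infty$-superharmonic functions. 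The construction is arranged to produce a value $u>M$ somewhere, contradicting $M=\|u\|_\infty$.

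For the reverse inclusion $\mathcal{M}\subseteq\{u=M\}$, I would first note that $\{u=M\}\neq\emptyset$ and, by the previous step, is contained in $\mathcal{M}$; fix $z_0$ in this intersection. Given $z\in\mathcal{M}$, Lipschitz connectedness supplies a Lipschitz curve $\gamma:[0,1]\to\mathcal{M}$ from $z_0$ to $z$ along which $\textnormal{dist}(\gamma,\partial\Omega)\equiv R$, so the Lipschitz estimate reads $|(u\circ\gamma)'(t)|\leq u(\gamma(t))/R\cdot|\gamma'(t)|$ a.e. Combining this bound along $\gamma$ with the cone estimate $u(y)\geq M(1-|y-z_0|/R)$ in $B_R(z_0)$ and the ceiling $u\leq M$, I would propagate the equality $u=M$ along $\gamma$ via a maximality argument: the set $\{t\in[0,1]:u(\gamma(t))=M\}$ is nonempty (contains $0$) and closed; closure under upward extension uses the cone comparison from the current endpoint to force equality on a small forward interval.

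The main obstacle is precisely this reverse inclusion. The Lipschitz estimate and the cone comparison individually produce only Harnack-type bounds $u\geq cM$ with $c<1$, never the sharp $u=M$. The essential rigidity, which is the heart of the argument in \cite[Theorem 2.4, (i)]{Yu}, is that a strict deficit $u(z)<M$ at some $z\in\mathcal{M}$ combined with cone comparison at the nearby maximum point along the Lipschitz curve leads, via continuity and the fact that $\textnormal{dist}(\cdot,\partial\Omega)\equiv R$ throughout $\mathcal{M}$, to a violation of either the cone lower bound or the global ceiling $u\leq M$. Implementing this chain argument rigorously in the viscosity framework, without classical differentiability of $u$ along $\mathcal{M}$, is the delicate technical point.
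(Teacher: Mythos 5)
The paper does not prove this lemma at all: it is quoted directly from \cite[Theorem 2.4, (i)]{Yu}, so your proposal is an attempt to supply from scratch an argument the paper deliberately outsources. As written it is not a proof: both inclusions terminate in an announced but unexecuted step (the ``bootstrap'' that is supposed to produce a value $u>M$ for $\{u=M\}\subseteq\mathcal{M}$, and the ``propagation of equality along $\gamma$'' for $\mathcal{M}\subseteq\{u=M\}$), and you yourself flag the latter as the delicate point that you have not implemented.

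More importantly, the route cannot be completed under the hypotheses you fix. With only $u\in C(\overline\Omega)$, $u\geq 0$, $u=0$ on $\partial\Omega$, and $\infty$-superharmonicity, the statement is false: on $\Omega=B_1(0)$ take
\[
u(x)=\min\bigl\{1-|x|,\ a(1+x_1)\bigr\},\qquad 0<a<1,
\]
the minimum of two $\infty$-superharmonic functions (hence $\infty$-superharmonic), continuous, nonnegative, and vanishing on $\partial B_1$. Its maximum $M=2a/(1+a)$ is attained only at $x^*=\frac{1-a}{1+a}\,e_1$, while $\mathcal{M}=\{0\}$ is Lipschitz connected and $u(0)=a<M$; both inclusions fail. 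This $u$ satisfies the cone lower bound $u(y)\geq M\bigl(1-|y-x^*|/\textnormal{dist}(x^*,\partial\Omega)\bigr)$ and the Lipschitz estimate $|\nabla u|\leq u/\textnormal{dist}(\cdot,\partial\Omega)$, which are the only two tools your argument invokes, so no proof built solely on them can succeed. The missing ingredient is the eikonal component of the equation: Yu's theorem is stated for infinity ground states, which satisfy $|\nabla u|\geq\lambda_\infty u>0$ in the viscosity sense, and in the application here $u_\Lambda$ satisfies $|\nabla u_\Lambda|\geq\Lambda e^{u_\Lambda}\geq\Lambda>0$. It is this strict gradient lower bound --- violated by the truncated example above on its affine piece --- that rigidifies the maximum set onto the high ridge, and any correct proof must use it; yours does not.
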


Now, we can complete the proof of Theorem \ref{asereje}.

\begin{proof}[Proof of Theorem \ref{asereje}]
Consider $u_\Lambda$  solution of \eqref{ecu.teo.uniqueness}  satisfying \eqref{curve.of.sols}. 
Notice that 
\[
v(x)=\frac{\|u_\Lambda\|_\infty}{\|\textnormal{dist}(\cdot,\partial\Omega)\|_\infty}\,\textnormal{dist}(\cdot,\partial\Omega)
\]
 is the unique (see \cite{Jensen93})  viscosity solution of the problem
\begin{equation}\label{homersaysdoh2}
\left\{
\begin{split}
-&\Delta_{\infty}v(x)=0&&\text{in}\ \Omega\setminus\mathcal{M}\\
& v(x)=\|u_\Lambda\|_\infty&&\text{on}\ \mathcal{M}\\
& v(x)=0&&\text{on}\ \partial\Omega.
\end{split}
\right.
\end{equation}
Since $u_{\Lambda}$ is $\infty$-superharmonic, it is also a viscosity supersolution of \eqref{homersaysdoh2} by  Lemma \ref{lemma.yu}. Then, we get  $v\leq u_{\Lambda}$ by comparison (see \cite{Jensen93}), and Lemma  \ref {trururu} yields $u_{\Lambda} \equiv v$. That is, $u_\Lambda$ is of the form \eqref{cono.bis2}.
Since all the solutions of \eqref{ecu.teo.uniqueness} of the form \eqref{cono.bis2} are given by Theorem \ref{conecone}, we find that there are no solutions with
\[
\Lambda_1(\Omega)\,\|u_\Lambda\|_\infty-\Lambda\,e^{\|u_\Lambda\|_\infty}>0.
\]
Furthermore, if $\Lambda_1(\Omega)\,\|u_\Lambda\|_\infty-\Lambda\,e^{\|u_\Lambda\|_\infty}=0$, then $u_\Lambda$ must be one of the explicit solutions  in Theorem \ref{conecone}.
\end{proof}

\


\begin{thebibliography}{99}


\bibitem{Abdellaoui.Peral.2003}
B. Abdellaoui, I. Peral; \emph{Existence and nonexistence results for quasilinear elliptic equations involving the $p$-Laplacian with a critical potential}, Ann. Mat. Pura Appl. 182 (2003), pp. 247--270.



\bibitem{Anane}A. Anane; \emph{Simplicit\`e et isolation de la premiere valeur propre du $p$-laplacien avec poids}, C. R. Acad. Sci. Paris S\'{e}r. I Math. 305 (1987), no. 16,   725--728.

\bibitem{Bha-DiBe-Man} T. Bhattacharya, E. DiBenedetto, J. Manfredi; \emph{Limit as $p\rightarrow\infty$ of $\Delta_{p}u_p=f$ and related extremal problems}, Rend. Sem. Mat. Univ. Politec. Torino (1989), pp. 15-68.

\bibitem{Bratu}G. Bratu; \emph{Sur les \'equations int\'egrales non lin\'eaires}, Bulletin de la Soci\'et\'e Math\'ematique de France 42 (1914): 113--142.

\bibitem{Brezis-Oswald}H. Brezis, L. Oswald; \emph{Remarks on sublinear elliptic equations}, Nonlinear Analysis, Theory, Methods \& Applications, Vol. 10 (1986), no. 1, pp. 55--64.


\bibitem{Cabre.Sanchon.2007} X. Cabr\'e, M. Sanch\'on; \emph{Semi-stable and extremal solutions of reaction equations involving the $p$-Laplacian}, Communications on Pure \& Applied Analysis, 2007, 6 (1), pp. 43--67.

\bibitem{Chandrasekhar}  S. Chandrasekhar; \emph{An introduction to the study of stellar structures} Dover, New York (1957).

\bibitem{Chanillo-Kiessling} S. Chanillo, M. Kiessling; \emph{Surfaces with prescribed Gauss curvature}, Duke Math. J. 105 (2000), no. 2, 309--353.

\bibitem{Charro.Parini}F. Charro, E. Parini; \emph{Limits as $p\to\infty$ of $p$-laplacian  problems with a superdiffusive power-type nonlinearity: positive and sign-changing solutions}, J. Math. Anal. Appl. 372 (2010), no. 2, 629--644.

\bibitem{Charro.Parini2}F. Charro, E. Parini; \emph{Limits as $p\to\infty$ of $p$-laplacian eigenvalue problems perturbed with a concave or convex term}, Calc. Var. and PDE 46 (2013), no. 1-2,  403--425.

\bibitem{Charro.Peral}F. Charro, I. Peral; \emph{Limit branch of solutions as $p\rightarrow\infty$ for a family of sub-diffusive  problems related to the p-laplacian} Comm. Partial Differential Equations, vol. 32 (2007), no. 12, pp. 1965 - 1981.

\bibitem{Charro.Peral2}F. Charro, I. Peral; \emph{Limits as $p\to\infty$ of $p$-Laplacian concave-convex problems}, Nonlinear Analysis: Theory, Methods \& Applications 75, no. 4 (2012): 2637-2659.



\bibitem{CIL} M. G. Crandall, H. Ishii, P. L. Lions; \emph{User's Guide to Viscosity Solutions of Second Order Partial Differential Equations}, Bull.  Amer. Math. Soc. 27 (1992), no. 1,  pp. 1-67.


\bibitem{Davila}J. D\'avila; \emph{Singular solutions of semi-linear elliptic problems}, Handbook of differential equations: stationary partial differential equations. Vol. VI, Handb. Differ. Equ., Elsevier/North- Holland, Amsterdam, 2008, pp. 83--176.

\bibitem{DelPino} M. Del Pino, J. Dolbeault, M. Musso; \emph{Multiple Bubbling for the exponential nonlinearity in the slightly supercritical case}, Comm. on Pure and Applied Analysis 5, no. 3 (2006) pp. 463--482. 


\bibitem{Endem.1907} R. Emden; \emph{Gaskugeln: Anwendungen der mechanischen W\"armetheorie auf kosmologische und meteorologische Probleme (Gas balls: applications of mechanical heat theory to cosmological and meteorological problems)}. Germany: B.G. Teubner, 1907.



\bibitem{Fukagai} N. Fukagai, M. Ito, K. Narukawa; \emph{Limit as $p\rightarrow\infty$ of $p$-Laplace eigenvalue problems and $L^\infty$-inequality of the Poincare type}, Differential Integral Equations 12 (1999), no. 2, pp. 183--206.

\bibitem{GP} J. Garc\'{\i}a Azorero, I. Peral; \emph{Existence and nonuniqueness for the p-laplacian: Nonlinear eigenvalues}, Comm. Partial Differential Equations, vol 12, no. 12 (1987)  1389-1430.

\bibitem{GP2} J. Garc\'{\i}a Azorero, I. Peral Alonso; \emph{On a Emden-Fowler type equation}, Nonlinear Analysis T.M.A., 18, no. 11 (1992), pp. 1085--1097.

\bibitem{GPP} J. Garc\'{\i}a Azorero, I. Peral Alonso, J.P. Puel; \emph{Quasilinear problems with exponential growth in the reaction term}, Nonlinear Analysis T.M.A., 22, no. 4 (1994), pp. 481--498. 


\bibitem{Gelfand}I. M. Gel'fand; \emph{Some problems in the theory of quasilinear equations}, Amer. Math. Soc. Transl. (2) 29 (1963), 295--381.

\bibitem{Grosjean}
J.-F. Grosjean; \emph{$p$-Laplace operator and diameter of manifolds},  Ann. Global Anal. Geom., 2005, 28, pp.257-270. 

\bibitem{Jacobsen.Schmitt.2002} J. Jacobsen, K. Schmitt; \emph{The Liouville-Bratu-Gelfand problem for radial operators}, Journal of Differential Equations 184, no. 1 (2002), pp. 283--298.

\bibitem{Jensen93} R. Jensen; \emph{Uniqueness of Lipschitz extensions: Minimizing the sup norm of the gradient}, Arch. Rational Mech. Anal. 123 (1993),   51--74.

\bibitem{Joseph.Lundgren.1973} D. D. Joseph, T. S. Lundgren; \emph{Quasilinear Dirichlet problems driven by positive sources}, Archive for Rational Mechanics and Analysis 49.4 (1973), pp. 241--269.

\bibitem{JuutinenTESIS} P. Juutinen; \emph{Minimization problems for Lipschitz functions via viscosity solutions}. Dissertation, University of Jyv\"{a}skul\"{a}, Jyv\"{a}skul\"{a}, 1998. Ann. Acad. Sci. Fenn. Math. Diss. No. 115 (1998), 53 pp.

\bibitem{Juutinen} P. Juutinen; \emph{Principal eigenvalue of a badly degenerate operator}, J. Differential Equations 236 (2007), no. 2, 532--550.

\bibitem{Juutinen-Lindqvist} P. Juutinen, P. Lindqvist;  \emph{On the higher eigenvalues for the $\infty$-eigenvalue problem}, Calc. Var. and Partial Differential Equations, 23:169--192, 2005.

\bibitem{Juutinen-Lindqvist-Manfredi} P. Juutinen, P. Lindqvist, J. Manfredi, \emph{The $\infty$-eigenvalue problem}, Arch. Ration. Mech. Anal. 148 (1999), no. 2, pp. 89-105.


\bibitem{Kawohl90} B. Kawohl; \emph{On a family of torsional creep problems}, J. Reine Angew. Math. 410 (1990),   1--22.



\bibitem{lindqvist90} P. Lindqvist; \emph{On the equation ${\rm div}\,(\vert \nabla u\vert \sp {p-2}\nabla u)+\lambda\vert u\vert \sp {p-2}u=0$}, Proc. Amer. Math. Soc. 109 (1990), no. 1, 157--164.

\bibitem{lindqvist92} P. Lindqvist; \emph{Addendum: ``On the equation ${\rm div}(\vert \nabla u\vert \sp {p-2}\nabla u)+\lambda\vert u\vert \sp {p-2}u=0$'' [Proc. Amer. Math. Soc. 109 (1990), no. 1, 157--164; MR1007505 (90h:35088)]}, Proc. Amer. Math. Soc. 116 (1992), no. 2,   583--584.

\bibitem{lindqvist-manfredi1} P. Lindqvist, J. Manfredi; \emph{The Harnack inequality for $\infty$-harmonic functions}, Elec. J. Diff. Eqs. 5 (1995),   1-5.

\bibitem{lindqvist-manfredi2} P. Lindqvist, J. Manfredi; \emph{Note on $\infty$-superharmonic functions}, Revista Matem\'{a}tica de la Universidad Complutense de Madrid 10 (1997),   1-9.


\bibitem{Lions} P. L. Lions; \emph{On the Existence of Positive Solutions of Semilinear Elliptic Equations}, SIAM Review 24, No. 4 (1982), pp. 441-467.



\bibitem{Liouville}J. Liouville; \emph{Sur l'\'equation aux diff\'erences partielles $\frac{d^2\log\lambda}{dudv}\pm \frac{\lambda}{2a^2} =0$}, Journal de math\'ematiques pures et appliqu\'ees 1re s\'erie, tome 18 (1853), p. 71--72.




\bibitem{Mihailescu-StancuDumitru-Varga} M. Mih\u{a}ilescu, D. Stancu-Dumitru, C. Varga; \emph{The convergence of nonnegative solutions for the family of problems $-\Delta_p u=\lambda e^u$ as $p\to\infty$}, ESAIM: Control, Optimisation and Calculus of Variations 24 (2) (2018), 569--578.

\bibitem{IP} I. Peral; \emph{ Some results on Quasilinear Elliptic Equations: Growth versus Shape},   153-202, {\it Proceedings of the Second School of Nonlinear Functional Analysis and Applications to Differential Equations} I.C.T.P. Trieste, Italy, A. Ambrosetti and {it alter} editors. World Scientific, 1998.

\bibitem{Richardson.1921}O. W. Richardson; \emph{The Emission of Electricity from Hot Bodies}, India:Longmans, Green and Company, 1921.

\bibitem{Sanchon.2007}M. Sanch\'on; \emph{Regularity of the extremal solution of some nonlinear elliptic problems involving the $p$-Laplacian}, Potential Analysis 27, no. 3 (2007), pp. 217-224.



\bibitem{Walker} G. W. Walker; \emph{Some problems illustrating the forms of nebulae}, Proceedings of the Royal Society of London. Series A, Containing Papers of a Mathematical and Physical Character 91.631 (1915): 410--420.

\bibitem{Yu} Y. Yu; \emph{Some properties of the ground states of the infinity Laplacian}, Indiana Univ. Math. J. 56 (2007), no. 2,   947--964.


\end{thebibliography}
\end{document}